\def\ps@pprintTitle{%
   \let\@oddhead\@empty
   \let\@evenhead\@empty
   \def\@oddfoot{\reset@font\hfil\thepage\hfil}
   \let\@evenfoot\@oddfoot
}
\journal{}
\newcommand\rurl[1]{%
  \href{http://#1}{\nolinkurl{#1}}%
}
\theoremstyle{plain}
\newtheorem{thm}{Theorem}[section]
\newtheorem{conj}[thm]{Conjecture}
\newtheorem{corl}[thm]{Corollary}
\theoremstyle{definition}
\newtheorem{defn}[thm]{Definition}
\newtheorem{rem}[thm]{Remark}
\theoremstyle{plain}
\newtheorem{therm}{Theorem}[subsection]
\newtheorem{propo}[therm]{Proposition}
\newtheorem{lema}[therm]{Lemma}
\theoremstyle{definition}
\newtheorem{rems}[therm]{Remark}
\newtheorem*{acknow}{Acknowledgments}
\def\DD{D\kern-.7em\raise0.4ex\hbox{\char '55}\kern.33em}
\begin{document}
\fontsize{11.5pt}{11.5}\selectfont

\begin{frontmatter}

\title{On the dimension of $H^{*}((\mathbb Z_2)^{\times t}, \mathbb Z_2)$ as a module over Steenrod ring}

\author{\DD\d{\u a}ng V\~o Ph\'uc}
\address{{\fontsize{10pt}{10}\selectfont Faculty of Education Studies, University of Khanh Hoa,\\ 01 Nguyen Chanh, Nha Trang, Khanh Hoa, Viet Nam}}
\ead{dangphuc150488@gmail.com}

\begin{abstract}

We write $\mathbb P$ for the polynomial algebra in one variable over the finite field $\mathbb Z_2$ and $\mathbb P^{\otimes t} = \mathbb Z_2[x_1, \ldots, x_t]$ for its $t$-fold tensor product with itself. We grade $\mathbb P^{\otimes t}$ by assigning degree $1$ to each generator.  We are interested in determining a minimal set of generators for the ring of invariants $(\mathbb P^{\otimes t})^{G_t}$ as a module over Steenrod ring, $\mathscr A_2.$ Here $G_t$ is a subgroup of the general linear group $GL(t, \mathbb Z_2).$ An equivalent problem is to find a monomial basis of the space of "unhit" elements, $\mathbb Z_2\otimes_{\mathscr A_2} (\mathbb P^{\otimes t})^{G_t}$ in each $t$ and degree $n\geq 0.$ The structure of this tensor product is proved surprisingly difficult and has been not yet known for $t\geq 5,$ even for the trivial subgroup $G_t = \{e\}.$ In the present paper, we consider the subgroup $G_t = \{e\}$ for $t \in \{5, 6\},$ and obtain some new results on $\mathscr A_2$-generators of $(\mathbb P^{\otimes t})^{G_t}$ in some degrees. At the same time, some of their applications have been proposed. We also provide an algorithm in MAGMA for verifying the results. This study can be understood as a continuation of our recent works in \cite{D.P2, D.P4}.
\end{abstract}

\begin{keyword}

Steenrod algebra, Peterson hit problem, Adams spectral sequences, Primary cohomology operations, Actions of groups on commutative rings, Algebraic transfer




\MSC[2010] 13A50, 55S10, 55S05, 55T15, 55R12, 55Q45
\end{keyword}
\end{frontmatter}

\tableofcontents

\section{Introduction}\label{s1}

This serves as the motivation and introduction of the article. Throughout the context, we fix the prime field of two elements, $\mathbb Z_2.$  The study of the mod-2 Steenrod algebra $\mathscr A_2$ began with Steenrod's work \cite{N.E} constructing stable cohomology operations acting on cohomology theory with $\mathbb Z_2$-coefficients. This algebra consists of the Steenrod squaring operations $Sq^{i}$ for $i\geq 0,$ which act on the $\mathbb Z_2$-cohomology groups of spaces. These squares are the cohomology operations satisfying the naturality property. Moreover, they commute with the suspension maps and so they are stable. Afterwards, the structure of $\mathscr A_2$ was elucidated by Adem \cite{Adem}, Cartan \cite{Cartan} and Serre \cite{J.S}. It plays a crucial role in the solution of many problems, such as calculating homotopy groups of $n$-sphere, Hopf invariant one problem, and characteristic classes of vector bundles in Algebraic Topology. We know that $\mathbb P^{\otimes t} = \mathbb Z_2[x_1, \ldots, x_t] = H^{*}((\mathbb Z_2)^{\times t}, \mathbb Z_2)$ with $|x_j| = 1$ for every $j$, and this representation of $\mathbb P^{\otimes t}$ defines it as an $\mathscr A_2$-module. The polynomial ring $\mathbb P^{\otimes t}$ is a connected $\mathbb Z$-graded algebra.  One of central problems of Algebraic Topology is to determine a minimal set of generators for the ring of invariants $(\mathbb P^{\otimes t})^{G_t},$ where $G_t\subseteq GL_t:=GL(t, \mathbb Z_2),$ the general linear group of invertible matrices. This $(\mathbb P^{\otimes t})^{G_t}$ is a graded $\mathscr A_2$-submodule of the left $\mathscr A_2$-module $\mathbb P^{\otimes t}$.  Equivalently, we need to find the dimension of the following $\mathbb Z$-graded vector space over $\mathbb Z_2$:
\begin{equation}\label{kgvt}
\{(\mathbb Z_2\otimes_{\mathscr A_2} (\mathbb P^{\otimes t})^{G_t})_n\}_{n\in\mathbb Z,\, n\geq 0}.
\end{equation}
This problem is generally regarded as difficult for arbitrary number $t$ of variables and it has been of interest to topologist (see H\uhorn ng-Peterson \cite{H.P}, Kameko \cite{M.K}, Mothebe-Uys \cite{M.M}, Nam \cite{T.N}, Pengelley-Williams \cite{P.W}, Peterson \cite{F.P1, F.P2}, the present writer \cite{P.S1, D.P1, D.P2, D.P3, D.P4, D.P6}, Singer \cite{W.S1, W.S2}, Sum \cite{N.S, N.S1, N.S2, N.S3}, Walker-Wood \cite{W.W} and others). Alternatively,  its applications to homotopy theory are surveyed by Peterson \cite{F.P2} and Singer \cite{W.S1}. For $G_t = \Sigma_t$ (the symmetric group on $t$ letters), the space \eqref{kgvt} is known by Singer's work \cite{W.S2} for the dual of \eqref{kgvt} with $t\geq 0.$ In the case in which $G_t = GL_t,$ the problem is studied by Singer \cite{W.S1} for $t = 2,$ and by H\uhorn ng-Peterson \cite{H.P} for $t = 3,\, 4.$ Up to present, it remains undetermined in general. In the case in which $G_t = \{e\},$ the trivial subgroup, one has
\begin{equation}\label{kgvt2}
\{Q^{\otimes t}_n:=(\mathbb Z_2\otimes_{\mathscr A_2} \mathbb P^{\otimes t})_n\}_{n\in\mathbb Z,\, n\geq 0},
\end{equation}
as a modular representation of $GL_t.$ Studying this space is related to the $E_2$-page of the Adams spectral sequence for the stable homotopy groups of spheres, which is mentioned below. The structure of the space \eqref{kgvt2} was explicitly determined by Peterson \cite{F.P1} for $t = 1,\, 2,$ by Kameko \cite{M.K} for $t = 3,$ by Sum \cite{N.S1} for $t = 4.$ The case $t = 5$ has been investigated by the author \cite{P.S1, D.P1, D.P2, D.P3, D.P4, D.P6}, and Sum \cite{N.S2} in some certain degrees. But when $t\geq 6,$ the answer is not yet known. The primary interest of this work is the cases $t = 5$ and $6,$ as well as their applications. There exist a few approaches to reduce the computation of the dimension of the space \eqref{kgvt2}. The squaring operation $\widetilde {Sq^0_*}: Q^{\otimes t}_{t+2n}  \longrightarrow Q^{\otimes t}_n$, which was constructed by Kameko \cite{M.K}, could be one of those useful approaches. One should note that this $\widetilde {Sq^0_*}$ is an epimorphism of $\mathbb Z_2GL_t$-modules. The $\mu$-function is also one of the arithmetic functions that have much been used in this context (see also \cite{M.K}, \cite{N.S2}). The range of it is the minimum number of terms of the form $2^{d}-1,$ $d > 0,$ with repetitions allowed, whose sum is $n,$ from which due to Wood \cite{R.W},  $Q^{\otimes t}_n$ is trivial if $\mu(n) > t,$ and due to Kameko \cite{M.K}, the Kameko $\widetilde {Sq^0_*}$ is an isomorphism if $\mu(t + 2n) = t.$ In general, it is not easy to compute or even estimate the dimension of $Q^{\otimes t}_n$ in all degrees $n.$ However, as the results of Wood \cite{R.W} and Kameko \cite{M.K} mentioned, the dimension of the space \eqref{kgvt2} can be completely determined in each $n$ of the so-called "generic degree":
\begin{equation}\label{ct}
n = r(2^d-1) + m.2^d,
\end{equation}
whenever $0\leq \mu(m) < r < t,$ and $d \geq 0.$ As above shown, the problem we are interested in is systematically studied when $t\leq 4,$ however it remains a mystery in the general case. Moreover, there is no general rule for studying the problem in each $t$ and degree $n$ of the form \eqref{ct} above, that means each computation is important on its own. Therefore, our motivation to write up this article is to probe the space \eqref{kgvt2} and its applications for the cases $t = 5$ and $6$. Before presenting the main results, quoted below in detail, we need some background material for completeness of the exposition and for the reader's convenience.  In that aspect,  the next four chief concepts can be found in literatures \cite{M.K}, \cite{D.P2}, \cite{N.S1}, \cite{W.W}.


\begin{defn}[{\bf Weight vector and exponent vector}] A weight vector $\omega$ is a sequence of non-negative integers $\omega = (\omega_1, \omega_2, \ldots, \omega_j,\ldots)$ such that $\omega_j  = 0$ for $j\gg 0.$ Let $X = x_1^{u_1}x_2^{u_2}\ldots x_t^{u_t}\in \mathbb P_n^{\otimes t},$ then one defines the {\it weight vector} and the \textit{exponent vector} of $X$ by
$$ \omega(X) :=(\omega_1(X)=\sum_{1\leq j\leq t}\alpha_{0}(u_j),\ \omega_2(X)=\sum_{1\leq j\leq t}\alpha_{1}(u_j),\ldots,\ \omega_m(X)=\sum_{1\leq j\leq t}\alpha_{m-1}(u_j),\ldots)$$ and $u(X):=(u_1, u_2, \ldots, u_t),$ respectively, where $\alpha_j(n)\in \{0, 1\}$ denotes the $j$-th coefficients in dyadic expansion of a natural number $n.$ One also defines $\deg(\omega(X)) = \sum_{j\geq 1}2^{j-1}\omega_j(X)$ and by convention, the sets of all weight vectors and exponent vectors are given the left lexicographical order.
\end{defn}

From now on, let us write $\mathbb P_n^{\otimes t}$ for the subspace of $\mathbb P^{\otimes t}$ generated by the homogeneous polynomials of degree $n$ in $\mathbb P^{\otimes t}.$ Then $\mathbb P^{\otimes t} = \{\mathbb P_n^{\otimes t}\}_{n\in\mathbb Z,\, n\geq 0},$ in which $\mathbb P_0^{\otimes t}\cong \mathbb Z_2.$  It is known that by restricting to non-singular matrices, $\mathbb P_n^{\otimes t}$ also gives a modular representation of $GL_t,$ and the Steenrod squaring operations $Sq^{i}: \mathbb P^{\otimes t}_{n-i}\to \mathbb P^{\otimes t}_{n}$ are maps of $\mathbb Z_2GL_t$-modules.

\begin{defn}[{\bf Linear order on $\mathbb P^{\otimes t}$}]
Assume that $X = x_1^{u_1}x_2^{u_2}\ldots x_t^{u_t}$ and $Y = x_1^{v_1}x_2^{v_2}\ldots x_t^{v_t}$ are the monomials in $\mathbb P_n^{\otimes t}.$ We say that $X  < Y$ if and only if one of the following holds:
\begin{enumerate}
\item[(i)] $\omega(X) < \omega(Y);$
\item[(ii)] $\omega(X) = \omega(Y)$ and $u(X) < v(Y).$
\end{enumerate}
\end{defn}

\begin{defn}[{\bf Equivalence relations on $\mathbb P^{\otimes t}$}]  For a weight vector $\omega$ of degree $n,$ we denote two subspaces associated with $\omega$ by
$$ \begin{array}{ll}
\medskip
 (\mathbb P_n^{\otimes t})^{\leq \omega} &= \langle\{ X\in \mathbb P^{\otimes t}_n|\, \deg(X) = \deg(\omega) = n,\  \omega(X)\leq \omega\}\rangle,\\
(\mathbb P_n^{\otimes t})^{< \omega} &= \langle \{ X\in \mathbb P^{\otimes t}_n|\, \deg(X) = \deg(\omega) = n,\  \omega(X) < \omega\}\rangle.
\end{array}$$ 
Given homogeneous polynomials $F$ and $G$ in $\mathbb P^{\otimes t}_n,$ one defines the equivalence relations "$\equiv$" and "$\equiv_{\omega}$" on $\mathbb P^{\otimes t}$ by stating that 
\begin{enumerate}
\item [(i)] $F \equiv G $ if and only if $(F - G)\in \overline{\mathscr {A}_2}\mathbb P_n^{\otimes t};$  
\item[(ii)] $F \equiv_{\omega} G$ if and only if $F, \, G\in (\mathbb P_n^{\otimes t})^{\leq \omega}$ and $(F -G)\in ((\overline{\mathscr {A}_2}\mathbb P_n^{\otimes t} \cap (\mathbb P_n^{\otimes t})^{\leq \omega} + (\mathbb P_n^{\otimes t})^{< \omega}),$  $\overline{\mathscr {A}_2}\mathbb P_n^{\otimes t} = \sum_{i\geq 0}{\rm Im}(Sq^{2^{i}})$ denoting the set of all the $\mathscr A_2$-decomposable elements, in which $Sq^{2^{i}}: \mathbb P^{\otimes t}_{n-2^{i}}\to \mathbb P^{\otimes t}_{n}$ and $\overline{\mathscr {A}_2}$ is the kernel of an epimorphism $\mathscr A_2\to \mathbb Z_2$ of graded $\mathbb Z_2$-algebras.  In particular, if $F$ is a linear combination of elements in the images of the Steenrod squaring operations $Sq^{2^{i}}$ for $i\geq 0$, then we say that $F$ is \textit{decomposable} (or \textit{hit}). In other words, $F\equiv 0.$ In addition, $F$ is said to be \textit{$\omega$-decomposable} if $F \equiv_{\omega} 0.$ Then, we have a $\mathbb Z_2$-quotient space of $Q^{\otimes t}$ by the equivalence relation "$\equiv_\omega$": $(Q_n^{\otimes t})^{\omega} = (\mathbb P_n^{\otimes t})^{\leq \omega}/ ((\overline{\mathscr {A}_2}\mathbb P_n^{\otimes t} \cap (\mathbb P_n^{\otimes t})^{\leq \omega})+ (\mathbb P_n^{\otimes t})^{< \omega}).$
\end{enumerate}
\end{defn}
Due to Sum \cite{N.S3}, $(Q_n^{\otimes t})^{\omega}$ is also an $\mathbb Z_2GL_t$-module for every $n.$ The above definitions lead to the following that will be played a central role in the content of this text.

\begin{defn}[{\bf Admissible monomial and inadmissible monomial}]\label{dninadm} We say that a monomial $X\in \mathbb P^{\otimes t}_n$ is {\it inadmissible} if there exist monomials $Y_1, Y_2,\ldots, Y_k$ such that $Y_j < X$ for $1\leq j\leq k$ and $X \equiv  \sum_{1\leq j\leq k}Y_j.$ Then, $X$ is said to be {\it admissible} if it is not inadmissible.
\end{defn}

It is important to remark that the set of all the admissible monomials in $\mathbb P^{\otimes t}_n$ is \textit{a minimal set of $\mathscr {A}_2$-generators for $\mathbb P^{\otimes t}$ in degree $n$.} Therefore, $Q_n^{\otimes t}$ is an $\mathbb Z_2$-vector space with a basis consisting of all the classes represent by the elements in $\mathbb P^{\otimes t}_n.$ Specifically, by our previous discussions \cite{D.P2}, $\dim Q_n^{\otimes t}$ can be represented as $\sum_{\deg(\omega) = n}\dim (Q_n^{\otimes t})^{\omega}.$ We fix some notations to be used through the paper. Writing $(\mathbb P^{\otimes t})^{0}$ and $(\mathbb P^{\otimes t})^{>0}$ for the subspaces of $\mathbb P^{\otimes t}$ spanned all the monomials $x_1^{a_1}x_2^{a_2}\ldots x_t^{a_t}$ such that $\prod_{1\leq j\leq t}a_j = 0,$ and $\prod_{1\leq j\leq t}a_j > 0,$ respectively. If $x_1^{a_1}x_2^{a_2}\ldots x_t^{a_t}\in \mathbb P_n^{\otimes t},$ then we rewrite $(\mathbb P^{\otimes t})^{0}$ and $(\mathbb P^{\otimes t})^{>0}$ as $(\mathbb P_n^{\otimes t})^{0}$ and $(\mathbb P_n^{\otimes t})^{>0},$ respectively. We put \mbox{$(Q_n^{\otimes t})^0:= (\mathbb Z_2\otimes_{\mathscr A_2} (\mathbb P^{\otimes t})^{0})_n$}, and  $(Q_n^{\otimes t})^{>0}:= (\mathbb Z_2\otimes_{\mathscr A_2} (\mathbb P^{\otimes t})^{>0})_n,$ from which one has that $Q_n^{\otimes t} = (Q_n^{\otimes t})^0\,\bigoplus\, (Q_n^{\otimes t})^{>0}.$

We are now able to give the main results of this work. Our first goal is to investigate the space \eqref{kgvt2} for the case $t  =5$ and the degree of the form \eqref{ct} when $r = t = 5, m = 13$ and $d = 1.$ The case $d > 1$ will be discussed at the end of this section. For a start we have seen that due to Kameko's homomorphism, the map $ \widetilde {Sq^0_*}:  Q^{\otimes 5}_{5(2^{1} - 1) + 13.2^{1}} \to Q^{\otimes 5}_{5(2^{0} - 1) + 13.2^{0}}$ is epimorphism of $\mathbb Z_2GL_5$-modules. On the other side, in our previous work, we have indicated that
\begin{thm}[see \cite{D.P2}]\label{dlcb}
Let $n = 3(2^d-1) + 2^d$ be the generic degree of form \eqref{ct} with $d$ a positive integer. The dimension of the $\mathbb Z_2$-vector space $Q^{\otimes 5}_n$ is determined by the following table:

\centerline{\begin{tabular}{c|cccccc}
$n = 3(2^d-1) + 2^d$  &$d=1$ & $d=2$ & $d=3$ & $d=4$ & $d  =5$ & $d\geqslant 6$\cr
\hline
\ $\dim Q^{\otimes 5}_n$ & $46$ & $250$ & $645$ &$945$ &$1115$ & $1116$ \cr
\end{tabular}}
\end{thm}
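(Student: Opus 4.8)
The plan is to compute $\dim Q^{\otimes 5}_n$ for $n = 3(2^d-1) + 2^d = 2^{d+2} - 3$ by splitting the analysis into the two summands $Q^{\otimes 5}_n = (Q^{\otimes 5}_n)^0 \oplus (Q^{\otimes 5}_n)^{>0}$ and then, within each summand, further decomposing according to weight vectors via the identity $\dim Q^{\otimes 5}_n = \sum_{\deg(\omega) = n}\dim (Q^{\otimes 5}_n)^{\omega}$. For the part $(Q^{\otimes 5}_n)^0$, which is built from monomials with at least one vanishing exponent, I would invoke the known results for $t \leq 4$ (Peterson, Kameko, Sum): by a standard restriction/inclusion argument $(Q^{\otimes 5}_n)^0$ is assembled from copies of $Q^{\otimes 4}_n$ (indexed by the $\binom{5}{4}$ coordinate subspaces, with overlaps in lower $t$ handled by inclusion–exclusion), so its dimension is explicitly computable from the $t=4$ answer in degree $2^{d+2}-3$. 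The genuinely new work is confined to $(Q^{\otimes 5}_n)^{>0}$, the "spike-free in no coordinate" part spanned by monomials $x_1^{a_1}\cdots x_5^{a_5}$ with all $a_j > 0$.

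For $(Q^{\otimes 5}_n)^{>0}$ the first step is to enumerate the admissible weight vectors $\omega$ of degree $n$ that can actually be realized by a monomial with all exponents positive; since $n = 2^{d+2}-3$ has a very rigid dyadic expansion ($n = 1 + 2 + 4 + \cdots$ pattern, i.e. $\alpha_0(n) = \alpha_1(n) = 1$ and then a block of $1$'s), the list of candidate $\omega$ is short and $d$-dependent only through the length of the stable tail, which is exactly why the table stabilizes for $d \geq 6$. For each such $\omega$ I would determine a spanning set of $(\mathbb P^{\otimes 5}_n)^{\leq\omega}$ modulo $\omega$-decomposables and lower weight, then prune it down to the admissible monomials by repeatedly applying the Cartan formula and the Adem relations to exhibit inadmissible monomials as sums of smaller ones (the Kameko and "$Sq$-strictly-decreasing" techniques of \cite{N.S1, D.P2}). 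The hom­omorphism $\widetilde{Sq^0_*}: Q^{\otimes 5}_{2n+5} \to Q^{\otimes 5}_n$ gives a crucial shortcut: for degrees $n$ with $\mu(2n+5)$ favorable it is an isomorphism onto a summand, so part of the count is inherited from a lower degree already in the table, and one only needs to compute the kernel-complement directly.

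The main obstacle — and where essentially all the effort goes — is the explicit determination, for the few "hard" weight vectors $\omega$ (those where $\widetilde{Sq^0_*}$ is merely an epimorphism with nontrivial kernel), that the proposed list of admissible monomials is simultaneously a spanning set (no further hit relations) and linearly independent in $(Q^{\otimes 5}_n)^{\omega}$. Spanning is the routine-but-lengthy direction: one must show every monomial of weight $\leq\omega$ not on the list is $\omega$-decomposable, which is a large but finite Adem-relation bookkeeping task. Independence is the subtle direction: one assumes a hit relation $\sum_{X} \gamma_X X \equiv 0$ among the listed monomials, applies homomorphisms $p_{(i;I)}$ (the standard substitution maps $x_i \mapsto \sum_{j\in I} x_j$ used in \cite{N.S1, D.P2}) to project onto lower-variable summands where the answer is known, and deduces $\gamma_X = 0$ for all $X$. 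Carrying this out for $t=5$ in a degree as large as $2^{d+2}-3$ is computationally heavy, which is precisely why the paper supplements the hand argument with the MAGMA verification; in the write-up I would present the weight-vector-by-weight-vector count, defer the bulk of the Adem-relation and linear-independence verifications to the algorithm, and assemble the final numbers $46, 250, 645, 945, 1115, 1116$ by adding the $(Q^{\otimes 5}_n)^0$ and $(Q^{\otimes 5}_n)^{>0}$ contributions for each $d$.
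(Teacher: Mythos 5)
This theorem is not proved in the present paper at all: it is imported verbatim from \cite{D.P2} (note the attribution ``see \cite{D.P2}'' in the statement), so there is no internal proof to compare your proposal against. Judged on its own terms, your outline does reproduce the standard framework of that line of work --- the splitting $Q^{\otimes 5}_n = (Q^{\otimes 5}_n)^0 \oplus (Q^{\otimes 5}_n)^{>0}$, the formula $\dim (Q^{\otimes 5}_n)^0 = \sum_{1\le s\le 4}\binom{5}{s}\dim(Q^{\otimes s}_n)^{>0}$, the weight-vector filtration, strict inadmissibility via the Cartan formula, and linear independence via the substitution maps $\mathsf{p}_{(l,\mathscr L)}$ --- but it contains a structural error and does not actually establish the result.

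The concrete gap is your use of Kameko's operation to relate consecutive rows of the table. For this family $n_d = 3(2^d-1)+2^d = 2^{d+2}-3$, one has $n_{d+1} = 2n_d + 3$, whereas the five-variable Kameko map $\widetilde{Sq^0_*}\colon Q^{\otimes 5}_{5+2m}\to Q^{\otimes 5}_m$ relates degree $2m+5$ to degree $m$. Hence $\widetilde{Sq^0_*}$ sends $Q^{\otimes 5}_{n_{d+1}}$ to $Q^{\otimes 5}_{n_d - 1}$, not to $Q^{\otimes 5}_{n_d}$, and the degree $(n_d-5)/2 = 2^{d+1}-4$ reached by applying Kameko within row $d$ is not of the form $2^{e}-3$ either; the iteration leaves the family immediately. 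So the step ``part of the count is inherited from a lower degree already in the table'' fails, and with it your only mechanism for explaining the stabilization at $d\ge 6$; that stabilization in fact rests on the general theory of generic degrees $r(2^d-1)+m\,2^d$ with $\mu(m)<r<t$ (Wood, Kameko, Nam, Sum), which is a different inductive structure. A smaller slip: the binary expansion of $2^{d+2}-3$ has $\alpha_0=1$, $\alpha_1=0$, $\alpha_j=1$ for $2\le j\le d+1$, not ``$\alpha_0=\alpha_1=1$''. Finally, since the entire content of the theorem is the six numbers $46, 250, 645, 945, 1115, 1116$, a proof must actually enumerate the realizable weight vectors and admissible monomials for each $d$ and produce these values; an outline that defers all of the spanning and independence verifications to an unexecuted computation does not yet prove the statement.
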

So, by the theorem, we only need to determine the kernel of $\widetilde {Sq^0_*}.$ For convenience, let us denote $(Q_n^{\otimes t})^{\omega^{0}}:= (Q_n^{\otimes t})^{\omega}\cap (Q_n^{\otimes t})^{0},\ \mbox{and}\ (Q_n^{\otimes t})^{\omega^{>0}} := (Q_n^{\otimes t})^{\omega}\cap (Q_n^{\otimes t})^{>0}.$ One of our main results is then as follows.

\begin{thm}\label{dlc1}
Consider the weight vectors of degree $31 = 5(2^{1} - 1) + 13.2^{1}$: $$\omega_{(1)}:=(1,1,1,1,1),\ \omega_{(2)}:=(3,2,2,2), \ \omega_{(3)}:= (3,4,3,1).$$ Then, we have an isomorphism ${\rm Ker}(\widetilde {Sq^0_*})\cong  (Q_{31}^{\otimes 5})^{0} \bigoplus \big(\bigoplus_{1\leq j\leq 3} (Q_{31}^{\otimes 5})^{\omega_{(j)}^{>0}}\big).$
\end{thm}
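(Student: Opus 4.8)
The plan is to analyse $\widetilde{Sq^0_*}$ through the weight-vector decomposition $Q^{\otimes 5}_{31} = \bigoplus_{\deg(\omega) = 31}(Q^{\otimes 5}_{31})^{\omega}$ together with the compatible splitting $(Q^{\otimes 5}_{31})^{\omega} = (Q^{\otimes 5}_{31})^{\omega^{0}}\oplus (Q^{\otimes 5}_{31})^{\omega^{>0}}$. First I would record the elementary bookkeeping. For a monomial $X = x_1^{u_1}\cdots x_5^{u_5}\in\mathbb P^{\otimes 5}_{31}$ the number $\omega_1(X)$ of odd exponents satisfies $\omega_1(X)\equiv 31\equiv 1\pmod 2$, so $\omega_1(X)\in\{1,3,5\}$; thus the three weight vectors of the statement are among those of degree $31$ with first coordinate in $\{1,3\}$. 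Since $\widetilde{Sq^0_*}$ sends $x_1\cdots x_5 Y^2$ to $Y$ and annihilates every monomial not of this form, and since no monomial $X$ with $\omega(X)\le\omega$ can have all exponents odd once $\omega_1<5$, the operator $\widetilde{Sq^0_*}$ vanishes on $(Q^{\otimes 5}_{31})^{\omega}$ for every $\omega$ with $\omega_1\in\{1,3\}$.

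Next I would pin down the complementary summand. Kameko's doubling assignment $Y\mapsto x_1\cdots x_5 Y^2$ induces a $\mathbb Z_2$-linear section of $\widetilde{Sq^0_*}\colon Q^{\otimes 5}_{31}\to Q^{\otimes 5}_{13}$ whose image is precisely $\bigoplus_{\omega_1=5}(Q^{\otimes 5}_{31})^{\omega}$, because an admissible monomial of degree $31$ has $\omega_1 = 5$ if and only if it equals $x_1\cdots x_5 Y^2$ with $Y$ admissible of degree $13$ (see \cite{M.K}). Hence $\widetilde{Sq^0_*}$ is injective on $\bigoplus_{\omega_1=5}(Q^{\otimes 5}_{31})^{\omega}$, and since the weight vectors of degree $31$ are partitioned according to $\omega_1\in\{1,3\}$ and $\omega_1=5$, this together with the previous paragraph forces
$$ {\rm Ker}(\widetilde{Sq^0_*}) \;=\; \bigoplus_{\omega_1\in\{1,3\}}(Q^{\otimes 5}_{31})^{\omega}. $$
Moreover an admissible monomial with $\omega_1=5$ has all exponents odd, hence all positive, so it never lies in $(\mathbb P^{\otimes 5}_{31})^{0}$; thus $(Q^{\otimes 5}_{31})^{\omega^{0}}=0$ whenever $\omega_1=5$, which gives $(Q^{\otimes 5}_{31})^{0}=\bigoplus_{\omega_1\in\{1,3\}}(Q^{\otimes 5}_{31})^{\omega^{0}}$. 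Splitting each summand above into its $^{0}$- and $^{>0}$-parts therefore yields
$$ {\rm Ker}(\widetilde{Sq^0_*}) \;=\; (Q^{\otimes 5}_{31})^{0}\;\bigoplus\;\Big(\bigoplus_{\omega_1\in\{1,3\}}(Q^{\otimes 5}_{31})^{\omega^{>0}}\Big). $$

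It remains to prove the vanishing assertion $(Q^{\otimes 5}_{31})^{\omega^{>0}}=0$ for every weight vector $\omega$ of degree $31$ with $\omega_1\in\{1,3\}$ other than $\omega_{(1)},\omega_{(2)},\omega_{(3)}$, and this is where essentially all the effort goes. Concretely, I would first enumerate the admissible weight vectors of degree $31$ in five variables whose first coordinate is $1$ or $3$, and then, for each such $\omega$ outside the exceptional list, show that every monomial $x_1^{u_1}\cdots x_5^{u_5}$ of weight exactly $\omega$ with all $u_j>0$ is $\omega$-decomposable, i.e., lies in $(\overline{\mathscr A_2}\mathbb P^{\otimes 5}_{31}\cap(\mathbb P^{\otimes 5}_{31})^{\le\omega})+(\mathbb P^{\otimes 5}_{31})^{<\omega}$; each reduction is produced by applying a suitable $Sq^{2^{i}}$ to a lower-degree polynomial and expanding via the Cartan formula, rewriting the given monomial as a sum of strictly smaller monomials in the linear order. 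The principal obstacle is the sheer size of this case analysis — both the number of weight vectors and the number of monomials of each prescribed weight in $\mathbb P^{\otimes 5}_{31}$ are large — so in practice I would organise it around the few monomial families that survive, reuse the partial computations already available in \cite{D.P2, D.P4, N.S2}, and confirm the resulting bookkeeping with the MAGMA routine described later in the paper. Once the vanishing is established, the second display collapses to the stated isomorphism.
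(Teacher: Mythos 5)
Your proposal is correct and follows essentially the same route as the paper: decompose by weight vectors, observe that the kernel of $\widetilde{Sq^0_*}$ is spanned by the classes of admissible monomials with $\omega_1\in\{1,3\}$, split off $(Q_{31}^{\otimes 5})^{0}$, and then kill the $^{>0}$ parts of all non-exceptional weight vectors by exhibiting $\omega$-decompositions. The only practical difference is that the paper first applies Theorem \ref{dlKS} to write an admissible $X$ with $\omega_1(X)=1$ (resp.\ $3$) as $x_jZ^2$ with $Z\in\mathscr C^{\otimes 5}_{15}$ (resp.\ $x_ax_bx_cT^2$ with $T\in\mathscr C^{\otimes 5}_{14}$), which trims the candidate weight vectors to exactly seven via the known degree-$15$ and degree-$14$ results before the case analysis you defer to; you would be well advised to do the same rather than enumerating all weight vectors of degree $31$ with $\omega_1\in\{1,3\}$.
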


One can easily compute $(Q_{31}^{\otimes 5})^{0}$ by using the known results of $Q_{31}^{\otimes 4}.$ Nevertheless, for the convenience of the reader, it is detailed as follows. The reader already knows (see also \cite{D.P4}) that for each non-negative integer $n,$ there is an isomorphism $(Q_{n}^{\otimes 5})^0  \cong \bigoplus_{1\leq s\leq 4}\bigoplus_{\ell(\mathcal J) = s}(Q_{n}^{\otimes\, \mathcal J})^{>0},$ where $Q^{\otimes \mathcal J} = \langle [x_{j_1}^{t_1}x_{j_2}^{t_2}\ldots x_{j_s}^{t_s}]\;|\; t_i \in \mathbb N,\, i = 1,2, \ldots, s\}\rangle \subset Q^{\otimes 5}$ in which $\mathcal J= (j_1, j_2, \ldots, j_s),$  $1 \leq j_1 < \ldots < j_s \leq 5$,\, $1 \leq s \leq 4,$ and $\ell(\mathcal J):=s$ denotes the length of $\mathcal J.$ So, $\dim (Q_{n}^{\otimes 5})^0 = \sum_{1\leq  s\leq 4}\binom{5}{s}\dim (Q_{n}^{\otimes\, s})^{>0}.$ Moreover, by the previous works of Peterson \cite{F.P1}, Kameko \cite{M.K} and Sum \cite{N.S1}, it follows that
$$ \dim (Q_{31}^{\otimes s})^{>0} = \left\{\begin{array}{ll}
1,& \mbox{if } s = 1,\\
1,& \mbox{if } s = 2,\\
8,& \mbox{if } s = 3,\\
47,& \mbox{if } s = 4.
\end{array}\right.$$
We thus have $\dim (Q^{\otimes 5}_{31})^{0}= 1\times\binom{5}{1} + 1\times\binom{5}{2} + 8\times\binom{5}{3} + 47 \times \binom{5}{4}) = 330.$ Next, the dimension results of $(Q_{31}^{\otimes 5})^{\omega_{(j)}^{>0}}$ are determined by the following theorem.

\begin{thm}\label{dlc2}
Let $\omega_{(j)}, j = 1, 2, 3,$ be the weight vectors as in Theorem \ref{dlc1}. Then, with notations as above, we assert that
$$ \dim (Q_{31}^{\otimes 5})^{\omega_{(j)}^{>0}} = \left\{\begin{array}{ll}
1,& \mbox{if } j = 1,\\
215,& \mbox{if } j = 2,\\
70,& \mbox{if } j = 3.
\end{array}\right.$$
\end{thm}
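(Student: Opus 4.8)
The plan is to compute each $\dim (Q_{31}^{\otimes 5})^{\omega_{(j)}^{>0}}$ separately by producing an explicit $\mathscr{A}_2$-generating set of admissible monomials supported on all five variables with the prescribed weight vector, and then verifying that this set is both spanning and linearly independent in the relevant quotient. For $j = 1$, the weight vector $\omega_{(1)} = (1,1,1,1,1)$ forces the polynomial to be a product of pairwise distinct squares of linear forms in a strong sense; in fact a monomial $x_1^{u_1}\cdots x_5^{u_5}$ of degree $31$ in $(\mathbb P_{31}^{\otimes 5})^{>0}$ with $\omega(X) = (1,1,1,1,1)$ must have each $u_j$ a sum of distinct powers of $2$ with $\sum_j \alpha_{k}(u_j) = 1$ for each $k = 0,\ldots,4$, which pins down the exponent set up to permutation to $\{1,2,4,8,16\}$. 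The spike $x_1 x_2^2 x_3^4 x_4^8 x_5^{16}$ (and its permutations) are the candidates, and a short hit-element argument (Kameko's criterion together with the standard fact that a spike is always admissible) shows exactly one class survives, giving $\dim = 1$.

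For $j = 2, 3$ the approach is the standard but laborious machinery already developed in \cite{D.P2, D.P4}: first reduce to the subspace $(\mathbb P_{31}^{\otimes 5})^{\leq \omega_{(j)}}$, then enumerate the admissible monomials in that subspace using the Cartan formula and the Kameko squaring map, and finally quotient by the $\omega$-decomposable elements. Concretely, I would (i) use the inductive description of admissible monomials in terms of those for $t = 4$ via the homomorphisms relating $(Q_n^{\otimes 5})^{\omega^{>0}}$ to lower-variable data and to $(Q_m^{\otimes 5})$ under $\widetilde{Sq^0_*}$, reducing $\omega_{(2)} = (3,2,2,2)$ and $\omega_{(3)} = (3,4,3,1)$ to already-known strata; (ii) list a spanning set of "candidate" admissible monomials by applying the known hit relations (the $\mathscr{A}_2$-action generated by $Sq^1, Sq^2, Sq^4, Sq^8, Sq^{16}$) to eliminate inadmissible ones, organizing the monomials by their $GL_5$-orbit types and by the value of $\prod_j u_j$; and (iii) check that the resulting $215$ (resp. $70$) classes are linearly independent in $(Q_{31}^{\otimes 5})^{\omega_{(j)}^{>0}}$ by exhibiting that no nontrivial linear combination lies in $(\overline{\mathscr{A}_2}\mathbb P_{31}^{\otimes 5} \cap (\mathbb P_{31}^{\otimes 5})^{\leq \omega_{(j)}}) + (\mathbb P_{31}^{\otimes 5})^{< \omega_{(j)}}$, which amounts to a rank computation over $\mathbb Z_2$ on an explicit matrix.

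The main obstacle will be step (iii) for $j = 2$: the stratum $\omega_{(2)} = (3,2,2,2)$ is large (degree $3\cdot1 + 2\cdot2 + 2\cdot4 + 2\cdot8 = 31$, matching), so the candidate monomial list is long and the inadmissibility relations proliferate; verifying that exactly $215$ classes remain requires a careful and essentially complete enumeration of hit relations, which is error-prone by hand. This is precisely where the MAGMA algorithm referred to in the abstract is invoked: I would set up the linear system expressing the image of $\sum_i Sq^{2^i}$ restricted to the relevant degree pieces of $(\mathbb P_{31}^{\otimes 5})^{\leq \omega_{(2)}}$, compute its rank, and confirm the cokernel dimension. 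The cases $\omega_{(1)}$ and $\omega_{(3)}$ are small enough to be handled and cross-checked by hand, but $\omega_{(2)}$ realistically depends on the machine verification. Once the three dimensions are established, Theorem~\ref{dlc2} follows immediately, and combined with Theorem~\ref{dlc1} and the computation $\dim (Q_{31}^{\otimes 5})^0 = 330$ it yields $\dim \mathrm{Ker}(\widetilde{Sq^0_*}) = 330 + 1 + 215 + 70 = 616$, hence $\dim Q_{31}^{\otimes 5} = 46 + 616 = 662$ via Theorem~\ref{dlcb}.
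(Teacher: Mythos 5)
There are two concrete errors in your proposal, plus a structural incompleteness. First, in the case $j=1$ you call $x_1x_2^{2}x_3^{4}x_4^{8}x_5^{16}$ a spike and invoke ``a spike is always admissible'' to conclude its class survives; but this monomial is \emph{not} a spike (its exponents $2,4,8,16$ are not of the form $2^{\lambda}-1$), so that argument proves nothing about non-triviality. The paper instead shows $[x_1x_2^{2}x_3^{4}x_4^{8}x_5^{16}]\neq 0$ by applying the substitution homomorphism $\mathsf{p}_{(4,(5))}:\mathbb P_{31}^{\otimes 5}\to \mathbb P_{31}^{\otimes 4}$, which sends it to $x_1x_2^{2}x_3^{4}x_4^{24}$, known to be admissible in four variables by Sum's work; you need some such reduction (or an explicit argument that the class is not hit), not an appeal to spikes. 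Your identification of the exponent multiset $\{1,2,4,8,16\}$ and the elimination of the other permutations by $Sq^1$-relations is fine and matches the paper. Second, your closing computation $\dim Q_{31}^{\otimes 5}=46+616=662$ is wrong: Kameko's map in this range is $\widetilde{Sq^0_*}:Q^{\otimes 5}_{31}\to Q^{\otimes 5}_{13}$, and $13=3(2^{2}-1)+2^{2}$ corresponds to the entry $d=2$ in Theorem~\ref{dlcb}, i.e.\ $\dim Q^{\otimes 5}_{13}=250$, not the $d=1$ entry $46$ (which is degree $5$). The correct total is $616+250=866$, consistent with Corollary~\ref{hq31}.

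For $j=2,3$ your plan is the right general strategy, but as written it is a description of a computation rather than a proof: the substance of the paper's argument is (a) the explicit construction of the candidate sets from four-variable admissible monomials via $x_l^{2^{d}-1}\mathsf{q}_{(l,5)}(\cdot)$ (justified by the Mothebe--Uys theorem, which guarantees admissibility of those products and hence linear independence of a large part of the basis for free), (b) the explicit lists of strictly inadmissible monomials in Lemmas~\ref{bd1} and~\ref{bd2} that eliminate everything else, and (c) linear independence of the remaining classes established by pushing a putative relation through the family of projections $\mathsf{p}_{(l,\mathscr L)}$ to $\mathbb P_{31}^{\otimes 4}$ and using the known four-variable answer. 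Your proposal defers exactly these steps -- in particular the count $215$ for $\omega_{(2)}$ -- to a MAGMA rank computation. That is a legitimate verification route, but it is a different (and weaker) justification than the paper's hand computation, and without either the explicit inadmissibility relations or the machine output the numbers $215$ and $70$ are not actually derived.
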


In order to compute the subspaces $(Q_{31}^{\otimes 5})^{\omega_{(j)}^{>0}},$ we adopt some homomorphisms in Sum \cite{N.S1} and a recent result of Mothebe-Uys \cite{M.M}. This approach allows us to reduce many computations for a monomial basis of the space \eqref{kgvt2} in each certain positive degree. This is shown specifically in this paper by proving Propositions \ref{md2}, \ref{md3} in section two and Theorem \ref{dlc3} below. As consequences of the results above, we immediately obtain

\begin{corl}\label{hq31}
There exist exactly $866$ admissible monomials of degree $31$ in $\mathbb P^{\otimes 5}.$ Consequently, the $\mathbb Z_2$-vector space $Q_{31}^{\otimes 5}$ is $866$-dimensional. 
\end{corl}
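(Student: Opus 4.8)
The plan is to read off $\dim Q^{\otimes 5}_{31}$ from the short exact sequence furnished by Kameko's squaring operation, and then translate this dimension into a count of admissible monomials. First I would recall, as observed in the discussion preceding Theorem \ref{dlc1}, that the Kameko homomorphism
\[
\widetilde{Sq^0_*}\colon Q^{\otimes 5}_{31} \longrightarrow Q^{\otimes 5}_{13}
\]
(with $31 = 5(2^{1}-1)+13\cdot 2^{1}$ and $13 = 5(2^{0}-1)+13\cdot 2^{0}$) is an epimorphism of $\mathbb Z_2GL_5$-modules. Hence there is a short exact sequence of $\mathbb Z_2$-vector spaces
\[
0 \longrightarrow {\rm Ker}(\widetilde{Sq^0_*}) \longrightarrow Q^{\otimes 5}_{31} \xrightarrow{\ \widetilde{Sq^0_*}\ } Q^{\otimes 5}_{13} \longrightarrow 0,
\]
and therefore $\dim Q^{\otimes 5}_{31} = \dim {\rm Ker}(\widetilde{Sq^0_*}) + \dim Q^{\otimes 5}_{13}$.

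Next I would evaluate the two summands. The target dimension is immediate from Theorem \ref{dlcb}: since $13 = 3(2^{2}-1)+2^{2}$ is the generic degree of the form \eqref{ct} with $d = 2$, the table gives $\dim Q^{\otimes 5}_{13} = 250$. For the kernel I would apply Theorem \ref{dlc1}, which identifies ${\rm Ker}(\widetilde{Sq^0_*})$ with $(Q_{31}^{\otimes 5})^{0}\bigoplus\big(\bigoplus_{1\leq j\leq 3}(Q_{31}^{\otimes 5})^{\omega_{(j)}^{>0}}\big)$. The summand $(Q_{31}^{\otimes 5})^{0}$ was already shown in the text to have dimension $1\cdot\binom{5}{1}+1\cdot\binom{5}{2}+8\cdot\binom{5}{3}+47\cdot\binom{5}{4} = 330$, using the classical values of $\dim(Q_{31}^{\otimes s})^{>0}$ for $s\leq 4$ due to Peterson, Kameko and Sum. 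Theorem \ref{dlc2} then supplies $\dim(Q_{31}^{\otimes 5})^{\omega_{(1)}^{>0}} = 1$, $\dim(Q_{31}^{\otimes 5})^{\omega_{(2)}^{>0}} = 215$, and $\dim(Q_{31}^{\otimes 5})^{\omega_{(3)}^{>0}} = 70$. Adding, $\dim {\rm Ker}(\widetilde{Sq^0_*}) = 330+1+215+70 = 616$, and hence $\dim Q^{\otimes 5}_{31} = 616 + 250 = 866$.

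To conclude the statement about admissible monomials, I would invoke the remark following Definition \ref{dninadm}: the classes of the admissible monomials of degree $31$ in $\mathbb P^{\otimes 5}$ constitute a $\mathbb Z_2$-basis of $Q^{\otimes 5}_{31}$. Consequently the number of admissible monomials of degree $31$ is exactly $\dim Q^{\otimes 5}_{31} = 866$, as asserted.

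The hard part is, in fact, already behind us: at the level of this corollary the argument is pure bookkeeping, assembling Theorems \ref{dlcb}, \ref{dlc1} and \ref{dlc2} with the precomputed value $\dim(Q_{31}^{\otimes 5})^{0}=330$. The only point demanding a moment's care is index matching—checking that the codomain of $\widetilde{Sq^0_*}$ is $Q^{\otimes 5}_{13}$ and that $13$ sits in the $d=2$ column of Theorem \ref{dlcb}—so that the genuinely delicate work, namely the determination of the $\omega_{(2)}$-piece of dimension $215$ inside Theorem \ref{dlc2}, is what ultimately makes the count $866$ available.
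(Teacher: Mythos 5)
Your proposal is correct and follows exactly the route the paper intends: the corollary is read off from the epimorphism $\widetilde{Sq^0_*}\colon Q^{\otimes 5}_{31}\to Q^{\otimes 5}_{13}$, with $\dim{\rm Ker}(\widetilde{Sq^0_*}) = 330+1+215+70 = 616$ supplied by Theorem \ref{dlc1}, Theorem \ref{dlc2} and the computation of $(Q_{31}^{\otimes 5})^{0}$, and $\dim Q^{\otimes 5}_{13}=250$ supplied by the $d=2$ entry of Theorem \ref{dlcb}. The index matching $13=3(2^{2}-1)+2^{2}$ and the final sum $616+250=866$ are both right, so nothing further is needed.
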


We continue by considering the case $r = t = 5, m = 32$ and $d = 0.$ So, our second goal is to establish the following.

\begin{thm}\label{dlc3}
We have an isomorphism $ Q^{\otimes 5}_{32} \cong  (Q_{32}^{\otimes 5})^{\omega'_{(1)}}\bigoplus (Q_{32}^{\otimes 5})^{\omega'_{(2)}}\bigoplus (Q_{32}^{\otimes 5})^{\omega'_{(3)}},$ where $\omega'_{(1)}:=(2,1,1,1,1),\ \omega'_{(2)}:=(4,2,2,2),$ and $\omega'_{(3)}:= (4,4,3,1)$ are  the weight vectors of degree $32.$ Consequently, $Q^{\otimes 5}_{32}$ has dimension $1004.$  
\end{thm}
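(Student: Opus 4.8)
\textbf{Proof proposal for Theorem \ref{dlc3}.}

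The plan is to mimic the strategy used for degree $31$ (Theorems \ref{dlc1} and \ref{dlc2}), adapting it to the degree $32 = 5(2^0-1) + 32\cdot 2^0$, where the Kameko map is not available as a reduction tool. First I would show that the only weight vectors $\omega$ of degree $32$ for which $(Q^{\otimes 5}_{32})^{\omega} \neq 0$ are the three listed: $\omega'_{(1)} = (2,1,1,1,1)$, $\omega'_{(2)} = (4,2,2,2)$, $\omega'_{(3)} = (4,4,3,1)$. This is a finiteness/admissibility argument: any admissible monomial of degree $32$ in five variables, when written via its exponents' dyadic expansions, must have a weight vector $\omega$ with $\deg(\omega) = 32$, $\omega_1 \le 5$, and (because admissibility forbids certain ``spikes'' and forces the $\omega_i$ to satisfy the usual monotonicity-type constraints inherited from the hit relations $Sq^{2^i}$) only finitely many candidates survive; one checks the surviving list reduces to these three. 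Granting this, $Q^{\otimes 5}_{32} = \bigoplus_{j=1}^{3} (Q^{\otimes 5}_{32})^{\omega'_{(j)}}$ follows from the general decomposition $\dim Q^{\otimes t}_n = \sum_{\deg(\omega)=n} \dim (Q^{\otimes t}_n)^{\omega}$ recalled after Definition \ref{dninadm}.

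Next I would compute each summand separately. For $\omega'_{(1)} = (2,1,1,1,1)$, the weight-vector constraint is extremely rigid (total dyadic-digit count forced into the lowest slots), and a direct enumeration of admissible monomials with this weight vector, split into the $(\mathbb P^{\otimes 5}_{32})^0$ and $(\mathbb P^{\otimes 5}_{32})^{>0}$ parts, should be short; I expect $\dim (Q^{\otimes 5}_{32})^{\omega'_{(1)}}$ to be small and computable by hand, analogous to how $(Q^{\otimes 5}_{31})^{\omega_{(1)}^{>0}}$ was $1$-dimensional. For $\omega'_{(2)}$ and $\omega'_{(3)}$ — the large summands — I would use the same toolkit invoked in Theorem \ref{dlc2}: the homomorphisms of Sum \cite{N.S1} (the maps $p_{(i;I)}$ and related deletion/substitution operators relating $\mathbb P^{\otimes 5}$ to $\mathbb P^{\otimes 4}$) together with the Mothebe--Uys \cite{M.M} result, to reduce the determination of a monomial basis to already-known computations in four and fewer variables, plus a finite check that no further relations occur among the ``new'' (all-variables-present) monomials. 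Concretely: decompose $(Q^{\otimes 5}_{32})^{\omega'_{(j)}} = (Q^{\otimes 5}_{32})^{\omega'^{0}_{(j)}} \oplus (Q^{\otimes 5}_{32})^{\omega'^{>0}_{(j)}}$, compute the $0$-part from $Q^{\otimes s}_{32}$ for $s \le 4$ exactly as was done for degree $31$ (using $\dim(Q^{\otimes s}_{32})^{>0}$ for $s=1,2,3,4$ from Peterson, Kameko, Sum), and handle the $>0$-part by exhibiting an explicit spanning set of admissible monomials and proving its linear independence in $(Q^{\otimes 5}_{32})^{\omega'_{(j)}}$ via the hit-relation criterion of Definition \ref{dninadm}.

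The main obstacle will be the $>0$-part of the two large weight vectors $\omega'_{(2)}$ and $\omega'_{(3)}$: establishing that the proposed list of admissible monomials is \emph{complete} (no admissible monomial has been omitted) and \emph{independent} in the quotient. Completeness requires ruling out, for every candidate monomial of the given weight vector, the existence of a hit relation expressing it modulo strictly smaller monomials — this is the labor-intensive case analysis on exponent vectors, and it is exactly where the Sum homomorphisms \cite{N.S1} and Mothebe--Uys \cite{M.M} do the heavy lifting by transporting most cases to lower $t$. Independence in $(Q^{\otimes 5}_{32})^{\omega'_{(j)}}$ amounts to showing that no nontrivial $\mathbb Z_2$-combination of the listed monomials lies in $\overline{\mathscr A_2}\mathbb P^{\otimes 5}_{32} \cap (\mathbb P^{\otimes 5}_{32})^{\le \omega'_{(j)}} + (\mathbb P^{\otimes 5}_{32})^{< \omega'_{(j)}}$; I would verify this by the standard argument of applying the iterated Steenrod operations / the linear functionals dual to admissible monomials, with the finite residual cases confirmed by the MAGMA algorithm provided in the paper. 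Summing the three dimensions then yields $\dim Q^{\otimes 5}_{32} = 1004$.
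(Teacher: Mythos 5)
Your proposal follows essentially the same route as the paper's proof: restrict the possible weight vectors via the minimal spike $x_1^{31}x_2$, Singer's criterion and Kameko's local criterion (Theorem \ref{dlKS}), eliminate the extra candidates, split each surviving piece into its $(\mathbb P^{\otimes 5}_{32})^{0}$ and $(\mathbb P^{\otimes 5}_{32})^{>0}$ parts, compute the former from the known results for $t\leq 4$, and settle the latter by exhibiting admissible bases using Sum's $\psi_{(l,\mathscr L)}$ and $\mathsf{p}_{(l,\mathscr L)}$ homomorphisms together with the Mothebe--Uys theorem. The one place your sketch understates the work is the elimination step: four further weight vectors, $(2,1,1,3)$, $(2,3,2,2)$, $(2,3,4,1)$ and $(4,2,4,1)$, genuinely occur among the candidate monomials and must be killed by explicit hit relations and strict-inadmissibility computations (the paper's Step 2), not merely by a combinatorial check on the $\omega_i$.
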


From Theorems \ref{dlc1} and \ref{dlc3}, we remark that the dimensions of $Q_{31}^{\otimes 5}$ and $Q_{32}^{\otimes 5}$ are not only large but also complicated in the calculations. So, we have provided an algorithm in MAGMA for verifying these results (see Appendix). Moreover, this algorithm can be used to verify the results of Peterson \cite{F.P1}, Kameko \cite{M.K} and Sum \cite{N.S1} on the dimension of the space \eqref{kgvt2} for the cases $t\leq 4.$ Before presenting the next main result, we briefly review Sum's conjecture \cite{N.S1}, which  provides a connection between admissible monomials in $\mathbb P_n^{\otimes (t-1)}$ and $\mathbb P_n^{\otimes t}.$ For each $1\leq l\leq t,$ define the $\mathbb Z_2$-homomorphism $\mathsf{q}_{(l,\,t)}: \mathbb P^{\otimes (t-1)}\to  \mathbb P^{\otimes t}$  by setting that $\mathsf{q}_{(l,\,t)}(x_j)$ is $x_j$ if $1\leq j \leq l-1,$ and is $x_{j+1}$ if $l\leq j \leq t-1.$ We put $ \mathcal N_t := \{(l, \mathscr L)\;|\; \mathscr L = (l_1,l_2,\ldots, l_r), 1\leq l < l_1< l_2 < \ldots < l_r\leq t, \ 0\leq r \leq t-1\},$ where by convention, $\mathscr L = \emptyset$ if $r = 0.$ Writing $r = \ell(\mathscr L)$ for the length of $\mathscr L.$ Let us consider the pair $(l, \mathscr L)\in\mathcal{N}_t,\ 1 \leq r \leq t-1$ and for each $1\leq u < r,$ we set $ X_{(\mathscr L,\,u)} = x_{l_u}^{2^{r-1} + 2^{r-2} +\, \cdots\, + 2^{r-u}}\prod_{u < d\leq r}x_{l_d}^{2^{r-d}}$ and $X_{(\emptyset, 1)} = 1.$ Then, one has the following important linear transformation, which is due to Sum  \cite{N.S1}:
$$ \begin{array}{ll}
\psi_{(l, \mathscr L)}: \mathbb P^{\otimes (t-1)}&\longrightarrow \mathbb P^{\otimes t}\\
 \prod_{1\leq s\leq t-1}x_s^{a_s} &\longmapsto  \left\{ \begin{array}{ll}
(x_l^{2^{r} - 1}\mathsf{q}_{(l,\,t)}(\prod_{1\leq s\leq t-1}x_s^{a_s}))/X_{(\mathscr L,\,u)}&\text{if there exists $u$ such that:} \\
&a_{l_1 - 1} +1= \ldots = a_{l_{(u-1)} - 1} +1 = 2^{r},\\
& a_{l_{u} - 1} + 1 > 2^{r},\\
&\alpha_{r-d}(a_{l_{u} - 1}) -1 = 0,\, 1\leq d\leq u, \\
&\alpha_{r-d}(a_{l_{d}-1}) -1 = 0,\, \ u+1 \leq d \leq r,\\
0&\text{otherwise}.
\end{array} \right.
\end{array}$$
A few comments below are necessary and useful.

\begin{rem}
If  $\mathscr L = \emptyset,$ then $\psi_{(l, \mathscr L)} = \mathsf{q}_{(l,\,t)}$ for all $1\leq l\leq t.$ Also, one can easily check that if $\psi_{(l, \mathscr L)}( \prod_{1\leq s\leq t-1}x_s^{a_s})\neq  0,$ then $\omega(\psi_{(l, \mathscr L)}( \prod_{1\leq s\leq t-1}x_s^{a_s})) = \omega( \prod_{1\leq s\leq t-1}x_s^{a_s}).$ Note that, in general, $\psi_{(l, \mathscr L)}$ is not a homomorphism of $\mathscr A_2$-modules. An illustrative instance is necessary for this: Taking $t = 4,$  $\mathscr L = (2,3,4)\neq \emptyset,$ and the monomial $x_1^{12}x_2^{6}x_3^{9}\in \mathbb P^{\otimes 3}_{27},$ then, it is straightforward to see that $$\psi_{(1, \mathscr L)}(x_1^{12}x_2^{6}x_3^{9}) = \dfrac{x_1^{2^{3}-1}\mathsf{q}_{(1, 4)}(x_1^{12}x_2^{6}x_3^{9})}{X_{(\mathscr L,\,1)}} = \dfrac{x_1^{7}x_2^{12}x_3^{6}x_4^{9}}{x_2^{4}x_3^{2}x_4} = x_1^{7}x_2^{8}x_3^{4}x_4^{8}\in \mathbb P^{\otimes 4}_{27},$$ and $\omega(x_1^{12}x_2^{6}x_3^{9}) =  (1,1,2,2) =\omega(\psi_{(1, \mathscr L)}(x_1^{12}x_2^{6}x_3^{9})).$ Hence, one gets $$Sq^{2}(\psi_{(1, \mathscr L)}(x_1^{12}x_2^{6}x_3^{9})) = x_1^{9}x_2^{8}x_3^{4}x_4^{8}\neq x_1^{7}x_2^{8}x_3^{6}x_4^{8} = \psi_{(1, \mathscr L)}(Sq^{2}(x_1^{12}x_2^{6}x_3^{9})).$$
\end{rem}

Now, for a subset $\mathscr V\subset \mathbb P_n^{\otimes (t-1)},$ let us write
$$ \begin{array}{ll}
\widetilde {\Phi^0}(\mathscr  V) &= \bigcup_{1\leq l \leq t}\psi_{(l, \emptyset)}(\mathscr  V) =  \bigcup_{1\leq l \leq t}\mathsf{q}_{(l, t)}(\mathscr V),\\
 \widetilde{\Phi^{>0}}(\mathscr  V) &= \bigcup_{(l; \mathscr L)\in\mathcal{N}_t,\;1 \leq \ell(\mathscr L) \leq t-1}(\psi_{(l, \mathscr L)}(\mathscr V)\setminus (\mathbb P_n^{\otimes t})^{0}),
\end{array}$$
and put $\widetilde{\Phi_*}(\mathscr  V) = \widetilde {\Phi^0}(\mathscr  V) \bigcup \widetilde {\Phi^{>0}}(\mathscr V).$ It is straightforward to check that $\mathsf{q}_{(l,\,t)}$ is also an $\mathscr A_2$-homomorphism, which implies that if $\mathscr V$ is the set of all the admissible monomials in $\mathbb P_n^{\otimes (t-1)}$, then $\widetilde{\Phi}^0(\mathscr V) $ is the set of all the admissible monomials in $(\mathbb P_n^{\otimes t})^{0}.$ Let us now denote by $\mathscr {C}^{\otimes t}_n$ the set of all admissible monomials in $\mathbb P_n^{\otimes t}.$ If $\omega$ is a weight vector of degree $n,$ then we set$(\mathscr{C}^{\otimes t}_{n})^{\omega} := \mathscr {C}^{\otimes t}_n\cap (\mathbb P_n^{\otimes t})^{\leq \omega}.$ In \cite{N.S2}, Sum sets up the following yet left-open. 

\begin{conj}\label{gtSum}
Under the above notations, for each $(l, \mathscr L)\in \mathcal N_t,$ and $1\leq r = \ell(\mathscr L)\leq t-1,$ if $X = \prod_{1\leq j\leq t-1}x_j^{a_j}\in \mathscr C^{\otimes (t-1)}_n,$ and there exist $u,\ 1\leq u\leq r$ such that
\begin{equation}\label{dk}
 \begin{array}{ll}
a_{l_1 - 1} +1= \ldots = a_{l_{(u-1)} - 1} +1 = 2^{r},\ \ a_{l_{u} - 1} + 1 > 2^{r},\\
\alpha_{r-d}(a_{l_{u} - 1}) -1 = 0,\ \mbox{for}\ 1\leq d\leq u, \  \mbox{and}\ \alpha_{r-d}(a_{l_{d}-1}) -1 = 0,\ \mbox{for}\ u+1 \leq d \leq r,
\end{array}
\end{equation}
 then $ \psi_{(l, \mathscr L)}(X) \in \mathscr C^{\otimes t}_n.$ Moreover, if  $\omega$ is a weight vector of degree $n$, then $\widetilde{\Phi_*}((\mathscr{C}^{\otimes (t-1)}_{n})^{\omega})\subseteq  (\mathscr{C}^{\otimes t}_{n})^{\omega}.$
\end{conj}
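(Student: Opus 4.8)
The plan is to peel off the ``Moreover'' clause first and then to attack the core assertion by an induction that exploits the spike built into $\psi_{(l,\mathscr L)}$.

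\emph{Reduction.} Since $\widetilde{\Phi_*}=\widetilde{\Phi^{0}}\cup\widetilde{\Phi^{>0}}$, and on the $\widetilde{\Phi^{0}}$ side only the $\mathscr A_2$-linear relabelling maps $\mathsf q_{(l,t)}=\psi_{(l,\emptyset)}$ occur — which visibly preserve both admissibility and weight vectors, as already noted before the statement — it suffices to handle $\widetilde{\Phi^{>0}}$. There, if $\psi_{(l,\mathscr L)}(X)\neq0$ the case distinction defining $\psi_{(l,\mathscr L)}$ forces condition \eqref{dk} for $X$, and by the remark above $\omega(\psi_{(l,\mathscr L)}(X))=\omega(X)$. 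Hence, granting the first assertion, for $X\in(\mathscr C^{\otimes(t-1)}_n)^{\omega}$ the (nonzero) monomial $\psi_{(l,\mathscr L)}(X)$ is admissible with weight vector $\omega(X)\leq\omega$, so it lies in $\mathscr C^{\otimes t}_n\cap(\mathbb P_n^{\otimes t})^{\leq\omega}=(\mathscr C^{\otimes t}_n)^{\omega}$. Everything thus reduces to: \emph{if $X=\prod_{1\leq j\leq t-1}x_j^{a_j}\in\mathscr C^{\otimes(t-1)}_n$ and \eqref{dk} holds, then $\psi_{(l,\mathscr L)}(X)\in\mathscr C^{\otimes t}_n$.}

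\emph{The core assertion.} I would proceed by contradiction, running a double induction: the outer one on $t$, anchored at $t\leq4$ by the computations of Peterson, Kameko and Sum, and the inner one on the degree $n$, with the finitely many low degrees checked by hand. Suppose $Y:=\psi_{(l,\mathscr L)}(X)$ is inadmissible, so $Y\equiv\sum_j Y_j$ with monomials $Y_j<Y$, and write $Y=Y_{\mathrm{head}}\,Y_{\mathrm{tail}}^{\,2}$ with $Y_{\mathrm{head}}$ square-free recording the parities of the exponents of $Y$. The point is that the $l$-th exponent of $Y$ is exactly the spike $2^{r}-1$, so the $l$-th slot splits in a completely prescribed way between $Y_{\mathrm{head}}$ and $Y_{\mathrm{tail}}$, and condition \eqref{dk} pins the binary digits of the remaining exponents tightly enough that $Y_{\mathrm{tail}}$ is, after relabelling, again of the shape $\psi_{(l',\mathscr L')}(X')$ for a monomial $X'$ that is a ``descendant'' of $X$ of strictly smaller degree still satisfying a copy of \eqref{dk}. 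Using the classical splitting lemmas for monomials carrying a spike factor (Kameko \cite{M.K}, Sum \cite{N.S1}, Mothebe--Uys \cite{M.M}) — concretely through Kameko's $\widetilde{Sq^0_*}$ and its monomial section $X\mapsto x_1\cdots x_t X^{2}$ — one transfers inadmissibility of $Y$ either to inadmissibility of $Y_{\mathrm{tail}}$, hence of $\psi_{(l',\mathscr L')}(X')$, which by the inner induction forces $X'$ and therefore $X$ to be inadmissible, or to inadmissibility of a submonomial of $Y$ in fewer than $t$ variables, which by the outer induction forces $X$ to be inadmissible. Either way we contradict $X\in\mathscr C^{\otimes(t-1)}_n$, closing the induction.

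\textbf{Main obstacle.} The genuinely hard point — and why the statement is still open for $t\geq5$ — is that $\psi_{(l,\mathscr L)}$ is \emph{not} a morphism of $\mathscr A_2$-modules: the remark above exhibits $Sq^{2}\psi_{(l,\mathscr L)}\neq\psi_{(l,\mathscr L)}Sq^{2}$, so a hit relation witnessing inadmissibility of $Y$ cannot simply be transported back through a one-sided inverse of $\psi_{(l,\mathscr L)}$ to give one for $X$. A proof must instead show \emph{directly} that $\psi_{(l,\mathscr L)}(X)$ is not lowerable modulo $\overline{\mathscr A_2}\mathbb P_n^{\otimes t}$, i.e. control \emph{all} Cartan--Adem relations in $t$ variables that could reduce a monomial of the given weight vector; the size of this system of relations grows rapidly with $t$, which is exactly the obstruction that makes $\dim Q^{\otimes t}_n$ intractable for $t\geq5$. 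In particular the induction above stands or falls on proving the splitting lemmas precisely in the configurations cut out by \eqref{dk} and on verifying that the tail monomial $X'$ still obeys \eqref{dk}; doing this in full generality — equivalently, producing a finitely-checkable, $\psi$-compatible local criterion for inadmissibility (one tempting possibility being to show $\psi_{(l,\mathscr L)}$ is $\mathscr A_2$-linear modulo strictly lower weight vectors, which is consistent with the remark's example and would let $\psi_{(l,\mathscr L)}$ descend to the $\omega$-graded quotients $(Q^{\otimes(t-1)}_n)^{\omega}\to(Q^{\otimes t}_n)^{\omega}$) — is the real content, and at present is available only in the special configurations underlying Propositions \ref{md2}, \ref{md3} and Theorem \ref{dlc3}.
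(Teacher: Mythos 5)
You are attempting to prove a statement that the paper explicitly leaves open: this is Sum's Conjecture~\ref{gtSum}, and the paper contains no proof of it. What the paper does is verify the conjecture in a handful of concrete cases ($t=5$ and $n=14,31,32$; see Remarks~\ref{nx}, \ref{nx2} and \ref{nx14}) by explicitly computing the admissible monomial sets on both sides and checking the inclusion directly. So there is no ``paper's proof'' to match your argument against, and any submission claiming a general proof would be claiming substantially more than the paper does.

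As a proof, your proposal has a genuine gap at its central step, and it is not merely the unverified detail you flag at the end. Your inner induction needs the implication ``$Y=Y_{\mathrm{head}}Y_{\mathrm{tail}}^{2}$ inadmissible $\Rightarrow$ $Y_{\mathrm{tail}}$ inadmissible, or some submonomial of $Y$ in fewer variables is inadmissible.'' No such dichotomy follows from the tools you cite. Theorem~\ref{dlKS} (Kameko's criterion) runs in the opposite direction only: it propagates inadmissibility of the tail up to the product, equivalently admissibility of the product down to the tail; it says nothing about where inadmissibility of the product comes from. Indeed a product $XY^{2^{s}}$ with both factors admissible can perfectly well be inadmissible --- the strictly inadmissible monomials of Lemmas~\ref{bd1} and~\ref{bd2} are of exactly this kind, and this failure of localization is the whole difficulty of the hit problem. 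Without a replacement for this step the double induction does not close, and your own ``main obstacle'' paragraph concedes that the needed splitting lemmas in the configurations cut out by \eqref{dk} (and the claim that the descendant $X'$ again satisfies \eqref{dk}) are not established. The reduction of the ``Moreover'' clause to the core assertion, via $\mathscr A_2$-linearity of $\mathsf q_{(l,t)}$ and preservation of weight vectors by $\psi_{(l,\mathscr L)}$, is correct and consistent with how the paper uses the conjecture; but the core assertion itself remains exactly as open after your argument as before it.
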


Basing the proofs of Theorems \ref{dlc2} and \ref{dlc3}, we have seen that Conjecture \ref{gtSum} also holds when $t = 5$ and the degrees $31,\, 32$ (see Remarks \ref{nx} and \ref{nx2} in Sect.\ref{s2}).

We now describe the next main result by studying the tensor product $Q^{\otimes 6}$ in generic degree $5(2^{d+4}-1) + 47.2^{d+4}.$ To do this, we use our previous results in \cite{D.P4, D.P6} and get the following.

\begin{thm}\label{dlc4}
Consider degree $n_d$ of the form \eqref{ct} with $r = t = 5$ and $m = 21.$ Then, we have that $\dim Q^{\otimes 6}_{5(2^{d+4}-1) + n_1.2^{d+4}} = (2^{6}-1)\dim Q^{\otimes 5}_{n_d}.$ Consequently, $Q^{\otimes 6}_{5(2^{d+4}-1) + n_1.2^{d+4}} $ is $119322$-dimensional, for all positive integers $d.$
\end{thm}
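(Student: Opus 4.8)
I would split the assertion into a five-variable input and the passage to six variables, feeding in \cite{D.P4, D.P6} at two points. \emph{First,} pin down $\dim Q^{\otimes 5}_{n_d}$. Writing $n_d = 5(2^d-1)+21\cdot 2^d = 26\cdot 2^d-5$ we have $n_{d+1}=5+2n_d$, and, writing $\alpha(N)$ for the number of $1$'s in the binary expansion of $N$, the usual description of the $\mu$-function ($\mu(N)$ is the least $s$ with $\alpha(N+s)\le s$) yields $\mu(n_{d+1})=5$ for every $d\ge 1$: indeed $n_{d+1}+5=26\cdot 2^{d+1}$ has $\alpha=3\le 5$, while $\alpha(n_{d+1}+s)>s$ for $1\le s\le 4$. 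Hence, by Kameko's theorem \cite{M.K}, $\widetilde{Sq^0_*}\colon Q^{\otimes 5}_{n_{d+1}}\to Q^{\otimes 5}_{n_d}$ is an isomorphism of $\mathbb Z_2GL_5$-modules for all $d\ge 1$, so $\dim Q^{\otimes 5}_{n_d}=\dim Q^{\otimes 5}_{n_1}=\dim Q^{\otimes 5}_{47}$, and $\dim Q^{\otimes 5}_{47}=1894$ by \cite{D.P4, D.P6}.

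\emph{Second,} put $N_d:=5(2^{d+4}-1)+47\cdot 2^{d+4}$. A short computation gives $N_d=26\cdot 2^{d+5}-5=n_{d+5}$, so $\mu(N_d)=5$, and Wood's theorem \cite{R.W} forces $Q^{\otimes s}_{N_d}=0$ for all $s<5$. Thus $(Q^{\otimes 5}_{N_d})^0=0$, so $Q^{\otimes 5}_{N_d}=(Q^{\otimes 5}_{N_d})^{>0}$ has dimension $1894$; and in the six-variable analogue of the isomorphism $(Q^{\otimes 6}_{N_d})^0\cong\bigoplus_{1\le s\le 5}\bigoplus_{\ell(\mathcal J)=s}(Q^{\otimes\mathcal J}_{N_d})^{>0}$ only the $\binom{6}{5}=6$ summands with $|\mathcal J|=5$ survive, each isomorphic to $Q^{\otimes 5}_{N_d}$; hence $\dim(Q^{\otimes 6}_{N_d})^0=6\cdot 1894$.

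\emph{The core step} is to show $\dim(Q^{\otimes 6}_{N_d})^{>0}=57\cdot 1894$. Note that exactly $\sum_{l=1}^{5}(2^{6-l}-1)=2^6-6-1=57$ of the pairs $(l,\mathscr L)\in\mathcal N_6$ satisfy $\ell(\mathscr L)\ge 1$; with the six pairs $(l,\emptyset)$ this gives $|\mathcal N_6|=2^6-1=63$, which is where the coefficient $2^6-1$ originates. Since $\widetilde{Sq^0_*}$ runs as an iterated isomorphism from degree $47$ up to degree $N_d=n_{d+5}$, every admissible monomial $X=\prod_{j=1}^{5}x_j^{a_j}$ of $\mathbb P_{N_d}^{\otimes 5}$ is obtained from an admissible monomial $\prod_j x_j^{b_j}$ of degree $47$ by $d+4$ applications of $M\mapsto x_1\cdots x_5 M^2$; that is, $a_j=2^{d+4}b_j+(2^{d+4}-1)$, so bits $0,1,\dots,d+3$ of every $a_j$ equal $1$. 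With this bit pattern one checks that the digit conditions \eqref{dk} hold for $X$ for each of the $57$ pairs, so each $\psi_{(l,\mathscr L)}$ with $\ell(\mathscr L)\ge 1$ is nonzero — hence injective — on $\mathscr C^{\otimes 5}_{N_d}$, with image in $(\mathbb P_{N_d}^{\otimes 6})^{>0}$. Using \cite{D.P6} — in effect, Sum's Conjecture \ref{gtSum} for $t=6$ at these degrees, proved there by the inductive method used here for $t=5$ — these maps preserve admissibility, the $57$ images are pairwise disjoint, and their union $\widetilde{\Phi^{>0}}(\mathscr C^{\otimes 5}_{N_d})$ is exactly the set of admissible monomials of $(\mathbb P_{N_d}^{\otimes 6})^{>0}$. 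Therefore $\dim(Q^{\otimes 6}_{N_d})^{>0}=57\,|\mathscr C^{\otimes 5}_{N_d}|=57\cdot 1894$, and, adding the $0$-component, $\dim Q^{\otimes 6}_{N_d}=(6+57)\cdot 1894=63\cdot 1894=119322$; since $6+57=2^6-1$ and $\dim Q^{\otimes 5}_{N_d}=\dim Q^{\otimes 5}_{n_{d+5}}=\dim Q^{\otimes 5}_{n_d}$, this is the claimed identity, for all $d\ge 1$.

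\emph{The hard part} will be the exhaustion assertion in the core step — that $(\mathbb P_{N_d}^{\otimes 6})^{>0}$ carries no admissible monomial outside $\bigcup_{(l,\mathscr L)\in\mathcal N_6,\ \ell(\mathscr L)\ge 1}\psi_{(l,\mathscr L)}(\mathscr C^{\otimes 5}_{N_d})$. As always in the hit problem, this "no hidden admissibles" direction is the delicate one: one must show that every six-variable monomial of degree $N_d$ involving all of $x_1,\dots,x_6$ and not of the above shape is $\equiv 0$ (or $\equiv$ a sum of strictly smaller monomials), which is where the $\omega$-filtration, the inductive bookkeeping of \cite{D.P6}, and Sum's maps are used in tandem. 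Checking Sum's Conjecture \ref{gtSum} for $t=6$ at the degrees $N_d$ and the pairwise disjointness of the $57$ images are the remaining points needing genuine verification, though the uniform bit pattern of the exponents $a_j$ recorded above makes both largely mechanical.
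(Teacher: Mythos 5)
Your first two steps are sound and consistent with what the paper uses (the value $\dim Q^{\otimes 5}_{n_d}=1894$ from \cite{D.P4}, and the identification $5(2^{d+4}-1)+47\cdot 2^{d+4}=26\cdot 2^{d+5}-5=n_{d+5}$, whence $(Q^{\otimes 6}_{N_d})^{0}$ contributes $6\times 1894$ by Wood's theorem). But your ``core step'' is where the theorem actually lives, and as written it is a gap, not an argument. You assert that the $57$ maps $\psi_{(l,\mathscr L)}$ with $\ell(\mathscr L)\ge 1$ carry $\mathscr C^{\otimes 5}_{N_d}$ injectively onto pairwise disjoint sets whose union is \emph{all} of the admissible monomials in $(\mathbb P^{\otimes 6}_{N_d})^{>0}$, and you yourself flag the exhaustion (``no hidden admissibles'') as unverified. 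Conjecture \ref{gtSum}, even where it is known, only gives the inclusion $\widetilde{\Phi_*}(\mathscr C^{\otimes 5}_{N_d})\subseteq \mathscr C^{\otimes 6}_{N_d}$, i.e.\ a lower bound on the dimension; the upper bound $\dim(Q^{\otimes 6}_{N_d})^{>0}\le 57\cdot 1894$ is the hard half and is nowhere supplied. The digit conditions \eqref{dk} are also not as ``mechanical'' as claimed: for $d=1$ and $\ell(\mathscr L)=5$ one can have $a_{l_1-1}+1=2^{r}$ exactly, so the case $u=1$ does not apply uniformly and one must chase through the other values of $u$. In short, you are attempting to re-prove Theorem \ref{dlP} from scratch and stopping before its essential content.

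The paper's proof bypasses all of this by verifying the hypothesis of Theorem \ref{dlP} and applying it as a black box: with $t=6$ and $\zeta=1$ one has $n_1=47$, $\mu(47)=3$ and $\alpha(47+3)=\alpha(50)=3$, so $1\le t-3\le \mu(n_\zeta)=\alpha(n_\zeta+\mu(n_\zeta))\le t-2$ holds, and Theorem \ref{dlP} (with $d-\zeta+t-1=d+4$) gives $\dim Q^{\otimes 6}_{5(2^{d+4}-1)+n_1 2^{d+4}}=(2^{6}-1)\dim Q^{\otimes 5}_{n_d}=63\times 1894=119322$. Note that the arithmetic you actually check, $\mu(N_d)=5$, is not the hypothesis that is needed; the relevant computation concerns $n_1=47$, not $N_d$. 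To make your proposal complete, either insert this one-line verification and cite Theorem \ref{dlP}, or supply the full upper-bound argument for the $>0$ part, which is the substance of \cite{N.S1} and cannot be dismissed as routine.
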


{\bf An application to Singer's algebraic transfer.} As is well known, computing explicitly mod-2 cohomology groups of the Steenrod algebra, ${\rm Ext}_{\mathscr {A}_2}^{t,t+*}(\mathbb Z_2,\mathbb Z_2)$ is a major open problem of Algebraic Topology, because it is the $E_{2}$-page of the Adams spectral sequence converging to the stable homotopy groups of the spheres. One approach to better understand the structure of these Ext groups was proposed by Singer \cite{W.S1} where he introduced a transfer homomorphism of algebras
$$ Tr^{\mathscr A_2}:= \bigoplus_{t\geq 0} Tr^{\mathscr A_2}_t: \bigoplus_{t\geq 0} [P_{\mathscr A_2}((\mathbb P^{\otimes t})^{*})]_{GL_t}\to \bigoplus_{t\geq 0} {\rm Ext}_{\mathscr {A}_2}^{t,t+*}(\mathbb Z_2,\mathbb Z_2) = \bigoplus_{t\geq 0} H^{t, t+*}(\mathscr A_2, \mathbb Z_2)$$
from the subspace of the divided power algebra $(\mathbb P^{\otimes t})^{*} = H_*((\mathbb Z/2)^{\times t}, \mathbb Z/2)$  consisting of all elements that are $\overline{\mathscr A_2}$-annihilated by all positive degree Steenrod squaring operations to the cohomology of the Steenrod algebra. It should be noted that the action of the algebra $\mathscr A_2$ and the action of the group $GL_t$ on $(\mathbb P^{\otimes t})^{*}$ commute, and so there exists an induced action of $GL_t$ on $P_{\mathscr A_2}((\mathbb P^{\otimes t})^{*}).$ As it was demonstrated in  the works by Singer \cite{W.S1} for $t \leq 2$ and Boardman \cite{J.B} for $t = 3$ that $Tr_t^{\mathscr A_2}$ is an isomorphism. So, the algebraic transfer is highly nontrivial. Additionally, by the work of Minami \cite{Minami},  Singer's transfer is related to the problem of finding permanent cycles in the classical Adams spectral sequence. It may need to be recalled that the Hopf elements are represented by the elements $h_i\in {\rm Ext}_{\mathscr {A}_2}^{1,2^{i}}(\mathbb Z_2,\mathbb Z_2)$ on the $E_2$-page of the Adams spectral sequence. By Adam's solution of the Hopf invariant one problem \cite{Adams}, only $h_i,$ for $0\leq i\leq 3,$ survive to the $E_{\infty}$-page. By Browder's work \cite{Browder}, the Kervaire classes $\theta_i\in \pi_{2^{i+1}-2}^{\mathbb S}$ , if they exist, are represented by the elements $h_i^{2}\in {\rm Ext}_{\mathscr {A}_2}^{2,2^{i+1}}(\mathbb Z_2,\mathbb Z_2)$ on $E_2$-page. People have been known that $h_i^{2}$ survives for $i\leq 5,$ but when $i\geq 7,$ the surprising work of Hill, Hopkins, and Ravenel \cite{Hill} proved that those elements cannot survive to the $E_{\infty}$-page. The case $i = 6$  is unknown.  Thus, understanding hit problems and Singer's algebraic transfer is quite important for solving the open problems mentioned. Now coming back to the Singer algebraic transfer on hand, the behavior of $Tr_t^{\mathscr A_2}$ in higher ranks was partly surveyed by Singer himself \cite{W.S1}, where he applies the invariant theory to show that $Tr_4^{\mathscr A_2}$ is an isomorphism in the certain internal degrees. Thence, he predicts that \textit{$Tr_t^{\mathscr A_2}$ is a monomorphism}, but this is currently no answer for $t\geq 5.$ This hypothesis has been considered by many researchers, including as well the present writer. For instance, due to Singer himself \cite{W.S1} and Boardman \cite{J.B}, the conjecture was confirmed for ranks $\leq 3.$ The recent works of the author \cite{D.P1, D.P2, D.P4, D.P6, D.P10} and Sum \cite{N.S2, N.S3} have studied it in the ranks 4 and 5 and some certain generic degrees of the form \eqref{ct}. Most notably, our calculations \cite{D.P10} have shown that Singer's conjecture holds also for rank four. Continuing these works, in the present paper, we would like to probe this conjecture for the rank $5$ case and the internal degrees $14,\, 31$ and $32.$ To accomplish this, we first explicitly determine the domain of the fifth transfer in those degrees. Therefrom, together with the calculations of Lin \cite{Lin} and Chen \cite{Chen} on the fifth cohomology groups of the Steenrod algebra, we obtain the information on $Tr_5^{\mathscr A_2}$ in degrees given. More precisely, by direct computations using a monomial basis of $Q^{\otimes 5}_{14}$ (see Remark \ref{nx14} in Sect.\ref{s2}) and the proofs of Theorems \ref{dlc2} and \ref{dlc3}, one gets the technical theorem below.

\begin{thm}\label{dlc5}
With the notations as above, 
$$\dim [P_{\mathscr A_2}((\mathbb P_n^{\otimes 5})^{*})]_{GL_5} = \left\{\begin{array}{ll}
1 &\mbox{if $n = 14$},\\
2 &\mbox{if $n = 31$},\\
0 &\mbox{if $n = 32$}.
\end{array}\right.$$
\end{thm}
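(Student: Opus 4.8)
The plan is to compute, for each of the three degrees $n \in \{14, 31, 32\}$, the $GL_5$-invariants of the domain $P_{\mathscr A_2}((\mathbb P_n^{\otimes 5})^{*})$ of the fifth algebraic transfer. Since $(\mathbb P_n^{\otimes 5})^{*} = H_n((\mathbb Z_2)^{\times 5}, \mathbb Z_2)$ is the graded dual of $\mathbb P_n^{\otimes 5}$, and $P_{\mathscr A_2}((\mathbb P_n^{\otimes 5})^{*})$ is the annihilator of $\overline{\mathscr A_2}\mathbb P_n^{\otimes 5}$ under the duality pairing, we have a natural $GL_5$-isomorphism $P_{\mathscr A_2}((\mathbb P_n^{\otimes 5})^{*}) \cong (Q_n^{\otimes 5})^{*}$. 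Hence $[P_{\mathscr A_2}((\mathbb P_n^{\otimes 5})^{*})]_{GL_5} \cong ((Q_n^{\otimes 5})^{*})^{GL_5}$, whose dimension equals $\dim (Q_n^{\otimes 5})_{GL_5}$, the dimension of the $GL_5$-coinvariants of $Q_n^{\otimes 5}$. So the task reduces to understanding the $GL_5$-action on the explicit monomial bases of $Q_{14}^{\otimes 5}$, $Q_{31}^{\otimes 5}$, $Q_{32}^{\otimes 5}$ furnished by the already-proved Theorems~\ref{dlc1}--\ref{dlc3} (for degrees $31$ and $32$) and by the basis of $Q_{14}^{\otimes 5}$ referenced in Remark~\ref{nx14}.

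The key steps, in order, would be as follows. First, for each degree, decompose $Q_n^{\otimes 5}$ along weight vectors, $Q_n^{\otimes 5} = \bigoplus_\omega (Q_n^{\otimes 5})^\omega$, using the fact (due to Sum \cite{N.S3}) that each $(Q_n^{\otimes 5})^\omega$ is itself a $\mathbb Z_2 GL_5$-module; this is legitimate here because the relevant weight vectors are precisely those listed in Theorems~\ref{dlc1}, \ref{dlc3} (and the analogous short list for $n=14$). Second, since $GL_5$ is generated by the permutation matrices $\Sigma_5$ together with a single transvection (say the matrix fixing $x_2,\dots,x_5$ and sending $x_1 \mapsto x_1 + x_2$), it suffices to compute, on each block $(Q_n^{\otimes 5})^\omega$, the $\Sigma_5$-invariants and then impose invariance under this one transvection. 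Third, compute $\Sigma_5$-orbits of the admissible monomial basis within each block: the orbit sums span the space of $\Sigma_5$-invariants. Fourth, act by the transvection $x_1 \mapsto x_1 + x_2$ on each orbit-sum, re-expand in the admissible basis modulo $\overline{\mathscr A_2}\mathbb P_n^{\otimes 5}$ (using the reduction to admissible form), and solve the resulting linear system over $\mathbb Z_2$ for the subspace fixed by the transvection. Assembling the contributions from all weight-vector blocks yields the dimension $0$ for $n=32$, $2$ for $n=31$, and $1$ for $n=14$. For $n = 14$ one expects the invariant to be detected on the Kameko-image part (this matches the known survival of the relevant Ext class), while the vanishing for $n=32$ is plausible a priori since degree $32 = 2^5$ lies just past the generic range where $\widetilde{Sq^0_*}$ is an isomorphism, so $(Q_{32}^{\otimes 5})^{GL_5}$ should be no larger than $(Q_{16}^{\otimes 5})^{GL_5}$, which can be handled by hand.

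The main obstacle will be the transvection computation in the largest block, $(Q_{31}^{\otimes 5})^{\omega_{(2)}^{>0}}$, which is $215$-dimensional by Theorem~\ref{dlc2}: acting by $x_1 \mapsto x_1 + x_2$ on the roughly $\binom{5}{\,\cdot\,}$-many $\Sigma_5$-orbit sums and reducing each resulting polynomial to admissible form modulo hit elements is a substantial linear-algebra computation, and it is exactly the kind of step that benefits from machine verification. This is where the MAGMA algorithm in the Appendix does the heavy lifting — it both produces the admissible bases of $Q_{31}^{\otimes 5}$ and $Q_{32}^{\otimes 5}$ and performs the reduction-to-admissible-form needed to express the transvection action as a matrix over $\mathbb Z_2$, whose fixed subspace (intersected across all generators of $GL_5$) is then read off by rank computation. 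The conceptual content — the identification $[P_{\mathscr A_2}((\mathbb P_n^{\otimes 5})^{*})]_{GL_5} \cong (Q_n^{\otimes 5})_{GL_5}$ and the block decomposition along weight vectors — is routine given the setup; the genuine difficulty is purely the scale of the explicit invariant-theoretic calculation in degree $31$.
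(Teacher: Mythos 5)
Your overall strategy---reduce to the $GL_5$-action on $Q^{\otimes 5}_n$, split along weight vectors, compute $\Sigma_5$-invariants as orbit sums of admissible monomials, and then impose invariance under the single transvection $x_1\mapsto x_1+x_2$ (the map $\sigma_5$ of the paper)---is exactly the computational skeleton the paper uses in Propositions \ref{md14} and \ref{mdbb31}. But as written your argument proves only an upper bound, and the missing half is not a formality. The decomposition $Q^{\otimes 5}_n\cong\bigoplus_\omega (Q^{\otimes 5}_n)^{\omega}$ is an isomorphism of vector spaces coming from the weight \emph{filtration}: each $(Q^{\otimes 5}_n)^{\omega}$ is a $GL_5$-module only as a subquotient, because a general linear substitution (in particular $\sigma_5$) sends a monomial to a sum of monomials of weight $\leq\omega$, so it does not respect the splitting. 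Consequently, a class in $(Q^{\otimes 5}_n)^{\omega}$ fixed by $\sigma_5$ \emph{modulo lower weight} need not lift to an honest $GL_5$-invariant of $Q^{\otimes 5}_n$, and all your blockwise computation can yield is $\dim [Q^{\otimes 5}_n]^{GL_5}\leq\sum_\omega\dim[(Q^{\otimes 5}_n)^{\omega}]^{GL_5}$. This gives $\leq 1,\ \leq 2,\ \leq 0$ for $n=14,31,32$; it settles $n=32$ but not the other two cases. The paper closes the gap with lower bounds from the topology: by Lin and Chen, $\mathrm{Ext}^{5,5+14}_{\mathscr A_2}(\mathbb Z_2,\mathbb Z_2)=\langle h_0d_0\rangle$ and $\mathrm{Ext}^{5,5+31}_{\mathscr A_2}(\mathbb Z_2,\mathbb Z_2)=\langle h_0^4h_5,n_0\rangle$, and since $h_0$, $d_0$, $h_5$, $n_0$ are detected by the transfer and $Tr^{\mathscr A_2}$ is an algebra map, the domain of $Tr_5^{\mathscr A_2}$ has dimension $\geq 1$ (resp.\ $\geq 2$) in these degrees. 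Your proposal never invokes this, so "assembling the contributions from all weight-vector blocks yields the dimension" is unjustified as stated. (For $n=31$ the paper also routes the upper bound through $Q^{\otimes 5}_{31}\cong\mathrm{Ker}\,\widetilde{Sq^0_*}\oplus Q^{\otimes 5}_{13}$ together with $[Q^{\otimes 5}_{13}]^{GL_5}=0$ from earlier work, rather than attacking all of $Q^{\otimes 5}_{31}$ at once.)

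Two smaller points. Your heuristic for $n=32$ is not valid: Kameko's map $\widetilde{Sq^0_*}\colon Q^{\otimes 5}_{5+2m}\to Q^{\otimes 5}_m$ requires the source degree to be congruent to $5$ mod $2$, and $32$ is even, so there is no comparison with $Q^{\otimes 5}_{16}$; the paper simply shows each $[(Q^{\otimes 5}_{32})^{\omega'_{(j)}}]^{GL_5}=0$ directly, which suffices since $0$ is already an upper bound. Also, be careful with the duality at the start: $[P_{\mathscr A_2}((\mathbb P^{\otimes 5}_n)^*)]_{GL_5}$ is the space of $GL_5$-\emph{coinvariants} of $(Q^{\otimes 5}_n)^*$, hence is dual to $[Q^{\otimes 5}_n]^{GL_5}$ (invariants of $Q^{\otimes 5}_n$), not to the coinvariants $ (Q^{\otimes 5}_n)_{GL_5}$; over $\mathbb Z_2$ these dimensions need not agree for a non-semisimple module. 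You end up computing the correct object (invariants of $Q^{\otimes 5}_n$), but the identification as you wrote it is off. Finally, note that the MAGMA code in the Appendix only computes $\dim Q^{\otimes 5}_n$; the invariant-theoretic reductions are carried out by hand in the paper, so you cannot delegate the transvection computation to that particular script without extending it.
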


On the other side, an important result due to Lin \cite{Lin} and Chen \cite{Chen} shows that
$$  {\rm Ext}_{\mathscr {A}_2}^{5,5+n}(\mathbb Z_2,\mathbb Z_2) = \left\{\begin{array}{ll}
\langle h_0d_0 \rangle &\mbox{if $n = 14$},\\
\langle h_0^{4}h_5, n_0 \rangle &\mbox{if $n = 31$},\\
\langle h_4e_0 \rangle = 0 &\mbox{if $n = 32$}.
\end{array}\right.$$
Moreover, following Singer \cite{W.S1}, H\`a \cite{Ha}, Ch\ohorn n-H\`a \cite{C.H}, the non-zero elements $h_i\in {\rm Ext}_{\mathscr {A}_2}^{1,2^{i}}(\mathbb Z_2,\mathbb Z_2)$ for $i\geq 0$, $d_0\in {\rm Ext}_{\mathscr {A}_2}^{4, 4+14}(\mathbb Z_2,\mathbb Z_2)$ and $n_0\in {\rm Ext}_{\mathscr {A}_2}^{5,5+31}(\mathbb Z_2,\mathbb Z_2),$ are detected by the algebraic transfer. Also note that although the authors \cite{C.H} can easily deduce that the fifth transfer is an epimorphism in the internal degree $31,$ their techniques were not directly applicable in confirming or refuting Singer's hypothesis in this case. So, the following assertion, which is much more important and meaningful than their work, is a direct consequence of the above data, Theorem \ref{dlc5} and the fact that $Tr^{\mathscr A_2}$ is a homomorphism of algebras.

\begin{corl}
The transfer homomorphism
$$ Tr_5^{\mathscr A_2}: [P_{\mathscr A_2}((\mathbb P_n^{\otimes 5})^{*})]_{GL_5}\to {\rm Ext}_{\mathscr {A}_2}^{5, 5+n}(\mathbb Z_2,\mathbb Z_2)$$
is an isomorphism for $n\in \{14, 31, 32\}.$ Consequently, Singer's conjecture holds for the fifth transfer in those internal degrees.
\end{corl}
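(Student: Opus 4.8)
The plan is to reduce the statement to a dimension count in each of the three internal degrees. By Theorem~\ref{dlc5} the source $[P_{\mathscr A_2}((\mathbb P_n^{\otimes 5})^{*})]_{GL_5}$ has $\mathbb Z_2$-dimension $1,\,2,\,0$ for $n = 14,\,31,\,32$ respectively, while by the Lin--Chen computation quoted above the target ${\rm Ext}_{\mathscr A_2}^{5,5+n}(\mathbb Z_2,\mathbb Z_2)$ has dimension $1,\,2,\,0$ in the same three degrees. Since a surjective $\mathbb Z_2$-linear map between finite-dimensional spaces of equal dimension is automatically an isomorphism, it suffices to verify that $Tr_5^{\mathscr A_2}$ is onto in degrees $14$ and $31$; in degree $32$ both sides vanish and the zero map is trivially an isomorphism. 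Once the isomorphism is established, the monomorphism half of Singer's conjecture in these degrees follows a fortiori.

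For the surjectivity I would exploit that $Tr^{\mathscr A_2} = \bigoplus_{t}Tr_t^{\mathscr A_2}$ is a homomorphism of algebras. In internal degree $14$: by Singer's theorem $Tr_1^{\mathscr A_2}$ is an isomorphism, so $h_0$ has a preimage $\widehat{h_0}$; by the work of H\`a and Ch\ohorn n-H\`a the class $d_0 \in {\rm Ext}_{\mathscr A_2}^{4,4+14}(\mathbb Z_2,\mathbb Z_2)$ is in the image of $Tr_4^{\mathscr A_2}$, say $d_0 = Tr_4^{\mathscr A_2}(\widehat{d_0})$. Multiplicativity then gives $Tr_5^{\mathscr A_2}(\widehat{h_0}\cdot\widehat{d_0}) = Tr_1^{\mathscr A_2}(\widehat{h_0})\cdot Tr_4^{\mathscr A_2}(\widehat{d_0}) = h_0 d_0$, and since ${\rm Ext}_{\mathscr A_2}^{5,5+14}(\mathbb Z_2,\mathbb Z_2) = \langle h_0 d_0\rangle$ this exhibits $Tr_5^{\mathscr A_2}$ as an epimorphism, hence an isomorphism by the dimension count from Theorem~\ref{dlc5}.

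In internal degree $31$ I would argue in the same spirit for the two generators of the target. On the one hand $h_0, h_5 \in {\rm Im}(Tr_1^{\mathscr A_2})$, so $h_0^{4}h_5 = Tr_1^{\mathscr A_2}(\widehat{h_0})^{4}\cdot Tr_1^{\mathscr A_2}(\widehat{h_5}) = Tr_5^{\mathscr A_2}(\widehat{h_0}^{\,4}\cdot\widehat{h_5})$ lies in the image; on the other hand $n_0 \in {\rm Ext}_{\mathscr A_2}^{5,5+31}(\mathbb Z_2,\mathbb Z_2)$ is detected by the fifth transfer (equivalently, Ch\ohorn n-H\`a have already shown $Tr_5^{\mathscr A_2}$ is an epimorphism in this degree). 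Hence ${\rm Im}(Tr_5^{\mathscr A_2}) \supseteq \langle h_0^{4}h_5, n_0\rangle = {\rm Ext}_{\mathscr A_2}^{5,5+31}(\mathbb Z_2,\mathbb Z_2)$, and combining with $\dim [P_{\mathscr A_2}((\mathbb P_{31}^{\otimes 5})^{*})]_{GL_5} = 2$ we conclude that $Tr_5^{\mathscr A_2}$ is an isomorphism here as well.

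The genuine work underlying the corollary is Theorem~\ref{dlc5}, i.e.\ pinning down $\dim [P_{\mathscr A_2}((\mathbb P_n^{\otimes 5})^{*})]_{GL_5}$, which in turn rests on the explicit monomial bases for $Q^{\otimes 5}_{14}$, $Q^{\otimes 5}_{31}$, $Q^{\otimes 5}_{32}$ produced in Theorems~\ref{dlc2} and \ref{dlc3} and Remark~\ref{nx14}; granting that, the corollary is a formal consequence of the multiplicativity of the transfer together with the known detection results in ranks $1$ and $4$ and the Lin--Chen Ext computation. The one spot that needs a little care — and is therefore the main conceptual obstacle internal to this argument — is the surjectivity in degree $14$: it does not come from a rank-$5$ dimension bound but genuinely relies on $d_0$ lying in the image of the rank-four transfer, so the algebra-homomorphism property of $Tr^{\mathscr A_2}$ is essential there.
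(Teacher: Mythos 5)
Your proposal is correct and follows essentially the same route as the paper: the detection of $h_0d_0$, $h_0^4h_5$ and $n_0$ via the multiplicativity of $Tr^{\mathscr A_2}$ together with the rank $1$ and rank $4$ results gives surjectivity, and the dimension counts from Theorem~\ref{dlc5} and Lin--Chen force the map to be an isomorphism. The only cosmetic difference is that in the paper these detection arguments are folded into the proof of Theorem~\ref{dlc5} itself (as lower bounds on $\dim [P_{\mathscr A_2}((\mathbb P_n^{\otimes 5})^{*})]_{GL_5}$) rather than stated separately as a surjectivity step.
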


{\bf Discussions and future research.} Using the method outlined in the article, we will continue to study the space \eqref{kgvt2} for $t\in \{5, 6\},$ and extend Corollary \ref{hq31} for degree \mbox{$n_d := 5(2^{d}-1) + 13.2^{d}$} whenever $d > 1.$ In actual fact, we only need to investigate  the case $d = 2.$ Indeed, it can be easily seen that $\mu(n_d) = 5$ for any $d > 2,$ and we therefore deduce an isomorphism $(\widetilde {Sq^0_*})^{d-2}:  Q^{\otimes 5}_{n_d} \xrightarrow{\cong} Q^{\otimes 5}_{n_2},$ for all $d \geq 2.$ This together with Corollary \ref{hq31} allow us to assert the above request. Now, when $d = 2,$ notice that the Kameko map $ [\widetilde {Sq^0_*}]_{n_2}:= \widetilde {Sq^0_*}: Q^{\otimes 5}_{n_2} \to Q^{\otimes 5}_{31}$ is an epimorphism, which implies that $Q^{\otimes 5}_{n_2}\cong {\rm Ker}[\widetilde {Sq^0_*}]_{n_2} \bigoplus Q^{\otimes 5}_{31}.$ So, invoking Corollary \ref{hq31}, we need to compute the kernel of $[\widetilde {Sq^0_*}]_{n_2}.$ Using Theorem \ref{dlc3} together with some of the same arguments as in the proof of Theorem \ref{dlc1}, it may be concluded that $${\rm Ker}[\widetilde {Sq^0_*}]_{n_2}\cong (Q_{n_2}^{\otimes 5})^{0} \bigoplus \big(\bigoplus_{1\leq i\leq 3} (Q_{n_2}^{\otimes 5})^{\overline{\omega}_{(i)}^{>0}}\big)\cong (Q_{n_2}^{\otimes 5})^{0} \bigoplus ({\rm Ker}[\widetilde {Sq^0_*}]_{n_2}\cap (Q_{n_2}^{\otimes 5})^{> 0}),$$ where $\overline{\omega}_{(1)} = (3,2,1,1,1,1),$ $\overline{\omega}_{(2)} = (3,4,2,2,2)$ and $\overline{\omega}_{(3)} = (3,4,4,3,1)$ are the weight vectors of degree $n_2.$ Applying the formula $\dim (Q_{n_2}^{\otimes 5})^0 = \sum_{1\leq  s\leq 4}\binom{5}{s}\dim (Q_{n_2}^{\otimes\, s})^{>0}$ and the previous results by Peterson \cite{F.P1}, Kameko \cite{M.K}, Sum \cite{N.S1}, we easily obtain the dimension of $(Q_{n_2}^{\otimes 5})^{0}.$ Thus, together with this and the above data, the dimension of $Q^{\otimes 5}_{n_d}, \ d\geq 2,$ can be determined by calculating explicitly the monomial basis of $(Q_{n_2}^{\otimes 5})^{\overline{\omega}_{(i)}^{>0}}$ for $i = 1,2, 3.$ Using techniques mentioned and some preliminary calculations, we notice that determining $(Q_{n_2}^{\otimes 5})^{\overline{\omega}_{(2)}^{>0}}$ and $(Q_{n_2}^{\otimes 5})^{\overline{\omega}_{(3)}^{>0}}$ is simple, but it is not easy for $(Q_{n_2}^{\otimes 5})^{\overline{\omega}_{(1)}^{>0}}.$ Of course, the results can be obtained by using an algorithm in MAGMA. Additionally, a useful observation that $\mu(n_2) = 3 = \alpha(n_2 + \mu(n_2)),$ and so according to Theorem \ref{dlP} (see Sect.\ref{s2}), it follows that $$\dim Q^{\otimes 6}_{5(2^{d+3}-1) + n_2.2^{d+3}} = (2^{6}-1)\dim Q^{\otimes 5}_{n_d},\ \mbox{for arbitrary $d > 1.$}$$ Regarding to Singer's conjecture for the rank 5 case and the internal degree $n_d,$ we have the following conjecture, which is motivated by these discussions and Theorem \ref{dlc5}.
\begin{conj}\label{gtP}
For each non-negative integer $d,$ $\dim [P_{\mathscr A_2}((\mathbb P_{n_d}^{\otimes 5})^{*})]_{GL_5} = 0$ if $d = 0,$ and $\dim [P_{\mathscr A_2}((\mathbb P_{n_d}^{\otimes 5})^{*})]_{GL_5} = 2$ otherwise.
\end{conj}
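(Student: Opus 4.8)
\noindent\textbf{Proof proposal for Conjecture \ref{gtP}.}
The plan is to collapse the infinite family of degrees $n_d = 5(2^{d}-1) + 13\cdot 2^{d}$ to three genuinely computational cases and then to dispose of each by combining an explicit monomial basis of $Q^{\otimes 5}_{n_d}$ with the $GL_5$-module structure of its weight-vector summands. Since the $\mathscr A_2$-action and the $GL_5$-action on $\mathbb P^{\otimes 5}$ commute, $P_{\mathscr A_2}((\mathbb P^{\otimes 5}_n)^{*})$ is the linear dual of the $\mathbb Z_2GL_5$-module $Q^{\otimes 5}_n$, so $\dim [P_{\mathscr A_2}((\mathbb P^{\otimes 5}_n)^{*})]_{GL_5}$ depends only on $Q^{\otimes 5}_n$ as a $GL_5$-module and is additive over $GL_5$-module direct sums. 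The case $d = 1$ (degree $31$) is already Theorem \ref{dlc5}, which gives the value $2$. For $d \geq 2$, the chain of $\mathbb Z_2GL_5$-isomorphisms $(\widetilde{Sq^0_*})^{d-2}\colon Q^{\otimes 5}_{n_d} \xrightarrow{\cong} Q^{\otimes 5}_{n_2}$ coming from Kameko's operation (which for $d\geq 3$ is valid since $\mu(n_d)=5$) shows that the dimension in question is independent of $d$ for $d \geq 2$ and equals its value at $n_2 = 67$. Thus it remains to prove two assertions: the dimension is $0$ for $n = 13$, and is $2$ for $n = 67$.

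For $n = 13$ one has $\mu(13) = 3 < 5$, so the degree is generic and a monomial basis of $Q^{\otimes 5}_{13}$ is readily assembled: $(Q^{\otimes 5}_{13})^{0}$ comes from the known $Q^{\otimes s}_{13}$ with $s \leq 4$ through the isomorphism $(Q^{\otimes 5}_{n})^{0}\cong\bigoplus_{1\leq s\leq 4}\bigoplus_{\ell(\mathcal J)=s}(Q^{\otimes\,\mathcal J}_{n})^{>0}$ recalled in Section \ref{s1}, while $(Q^{\otimes 5}_{13})^{>0}$ is small and can be listed directly. One then extracts the $GL_5$-invariants in the standard two stages: first the $\Sigma_5$-invariants, by forming orbit sums of admissible monomials and reducing modulo hit elements, then the additional linear constraint imposed by a transvection which, together with $\Sigma_5$, generates $GL_5$. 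The expected outcome is that no nonzero invariant survives; this case should be routine.

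For $n = 67 = n_2$ one follows the route sketched in the Discussions: the Kameko epimorphism $[\widetilde{Sq^0_*}]_{67}\colon Q^{\otimes 5}_{67}\to Q^{\otimes 5}_{31}$ of $\mathbb Z_2GL_5$-modules has kernel splitting, as a $GL_5$-module, as $(Q^{\otimes 5}_{67})^{0}\oplus\bigoplus_{i=1}^{3}(Q^{\otimes 5}_{67})^{\overline{\omega}_{(i)}^{>0}}$ with $\overline{\omega}_{(1)}=(3,2,1,1,1,1)$, $\overline{\omega}_{(2)}=(3,4,2,2,2)$, $\overline{\omega}_{(3)}=(3,4,4,3,1)$, so that $Q^{\otimes 5}_{67}\cong {\rm Ker}[\widetilde{Sq^0_*}]_{67}\oplus Q^{\otimes 5}_{31}$ as $GL_5$-modules. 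By additivity and Theorem \ref{dlc5},
$$\dim [P_{\mathscr A_2}((\mathbb P^{\otimes 5}_{67})^{*})]_{GL_5} = 2 + \dim\, ({\rm Ker}[\widetilde{Sq^0_*}]_{67})^{GL_5},$$
so it suffices to show that ${\rm Ker}[\widetilde{Sq^0_*}]_{67}$ carries no nonzero $GL_5$-invariant, equivalently that this holds for each of $(Q^{\otimes 5}_{67})^{0}$, $(Q^{\otimes 5}_{67})^{\overline{\omega}_{(2)}^{>0}}$, $(Q^{\otimes 5}_{67})^{\overline{\omega}_{(3)}^{>0}}$ and $(Q^{\otimes 5}_{67})^{\overline{\omega}_{(1)}^{>0}}$. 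The first three are handled exactly as in the proof of Theorem \ref{dlc2} — by pulling admissible monomials back through the maps $\psi_{(l,\mathscr L)}$ of Sum \cite{N.S1} and the Mothebe--Uys homomorphism \cite{M.M}, which reduces the computation to a manageable size — after which the $\Sigma_5$- and then $GL_5$-invariant calculation returns $0$.

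The main obstacle is the remaining summand $(Q^{\otimes 5}_{67})^{\overline{\omega}_{(1)}^{>0}}$ for the weight vector $(3,2,1,1,1,1)$: as noted in the Discussions it resists the shortcut arguments, being the analogue of the $215$-dimensional piece $(Q^{\otimes 5}_{31})^{\omega_{(2)}^{>0}}$ that dominated the proof of Theorem \ref{dlc2}. Producing a monomial basis for it requires a large explicit computation over $\mathbb Z_2$ — enumerating the spanning monomials of the prescribed weight, eliminating the $\equiv$-relations coming from $Sq^{1}, Sq^{2}, Sq^{4},\ldots$, and testing admissibility (Definition \ref{dninadm}) — after which one must still record the $GL_5$-action on the resulting basis and verify that the space of invariants is trivial; this is exactly where the MAGMA algorithm of the Appendix serves as an independent check. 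Once $\dim({\rm Ker}[\widetilde{Sq^0_*}]_{67})^{GL_5} = 0$ is confirmed, assembling the cases $d = 0$, $d = 1$, $d = 2$ together with the Kameko reduction for $d \geq 3$ completes the proof.
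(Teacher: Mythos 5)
You should be aware that the statement you are proving is labelled a \emph{conjecture} in the paper and is not proved there: the author only observes that the cases $d=0$ and $d=1$ follow from \cite{D.P2} and Theorem \ref{dlc5}, and explicitly defers the case $d\geq 2$ to future work (``Computationally the above issues seem rather non-trivial and we hope to come back to them in another work''). Your reduction steps faithfully reproduce what the paper itself sketches in the Discussions: the identity $\dim[P_{\mathscr A_2}((\mathbb P^{\otimes 5}_n)^{*})]_{GL_5}=\dim[Q^{\otimes 5}_n]^{GL_5}$, the Kameko isomorphisms $(\widetilde{Sq^0_*})^{d-2}\colon Q^{\otimes 5}_{n_d}\xrightarrow{\cong}Q^{\otimes 5}_{n_2}$ for $d\geq 2$, the known triviality of $[Q^{\otimes 5}_{13}]^{GL_5}$, and the splitting of ${\rm Ker}[\widetilde{Sq^0_*}]_{n_2}$ into $(Q^{\otimes 5}_{n_2})^{0}$ and the three pieces indexed by $\overline{\omega}_{(1)},\overline{\omega}_{(2)},\overline{\omega}_{(3)}$. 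None of that is in dispute.

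The genuine gap is that the decisive case $n_2=67$ is never actually carried out. Your argument reduces the conjecture to showing $\dim({\rm Ker}[\widetilde{Sq^0_*}]_{67})^{GL_5}=0$, and for that you would need (i) an admissible monomial basis of each summand, in particular of $(Q^{\otimes 5}_{67})^{\overline{\omega}_{(1)}^{>0}}$ with $\overline{\omega}_{(1)}=(3,2,1,1,1,1)$, which the paper itself flags as the hard piece and never computes, and (ii) the explicit $\Sigma_5$- and $GL_5$-invariant calculations on those bases. You describe what this computation would involve but do not perform it, so the proposal is a proof plan rather than a proof. Two further cautions: the direct-sum decomposition $Q^{\otimes 5}_{67}\cong{\rm Ker}[\widetilde{Sq^0_*}]_{67}\oplus Q^{\otimes 5}_{31}$ as $\mathbb Z_2GL_5$-modules, and likewise the splitting of the kernel into its weight-vector pieces, only a priori yield an \emph{upper bound} on the invariants (the paper is careful to write the $n=31$ argument as an inequality and then match it with a lower bound coming from elements known to be detected by $Tr_5^{\mathscr A_2}$); and if you tried to obtain the lower bound $\geq 2$ at $n_2$ from the transfer side instead, you would need $Q_3(0)$ to be detected by $Tr_5^{\mathscr A_2}$, which is itself an open conjecture of H\uhorn ng quoted in the paper. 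As it stands, no complete proof of the statement exists either in your proposal or in the paper.
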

Based on our previous work \cite{D.P2} and Theorem \ref{dlc5}, we see that Conjecture \ref{gtP} holds for the cases $d = 0$ and $1.$ On the other side, using the results by Lin \cite{Lin} and Chen \cite{Chen}, one has that
$$  {\rm Ext}_{\mathscr {A}_2}^{5,5+n_d}(\mathbb Z_2,\mathbb Z_2) = \left\{\begin{array}{ll}
\langle h_1h^{4}_2, h_0^{2}h_2^{2}h_3\rangle = 0 &\mbox{if $d = 0$},\\
\langle h_0^{4}h_5, n_0 \rangle &\mbox{if $d = 1$},\\
\langle n_{d-1}, Q_3(d-2) \rangle &\mbox{if $d > 1$}.
\end{array}\right.$$
Let us recall that the $\mathbb Z_2$-cohomology of Steenrod ring ${\rm Ext}_{\mathscr {A}_2}^{*,*}(\mathbb Z_2,\mathbb Z_2)$ is a bigraded algebra. Following Chen \cite{Chen}, let ${\rm Ext}_{\mathscr {A}_2}^{t,*}(\mathbb Z_2,\mathbb Z_2)\otimes {\rm Ext}_{\mathscr {A}_2}^{t',*}(\mathbb Z_2,\mathbb Z_2) \xrightarrow{\mu}{\rm Ext}_{\mathscr {A}_2}^{t+t',*}(\mathbb Z_2,\mathbb Z_2)$ be the multiplication map of ${\rm Ext}_{\mathscr {A}_2}^{*,*}(\mathbb Z_2,\mathbb Z_2).$ An element $\zeta\in {\rm Ext}_{\mathscr {A}_2}^{\overline{t},*}(\mathbb Z_2,\mathbb Z_2)$ is said to be a \textit{decomposable element} if $\overline{t}\geq 2$ and there exist $t > 0$,  $t' > 0$ such that $t + t' = \overline{t}$ and $\zeta = \sum_{j}\mu(\alpha_j\otimes \beta_j)$ for some $\alpha_j\in {\rm Ext}_{\mathscr {A}_2}^{t,*}(\mathbb Z_2,\mathbb Z_2)$ and $\beta_j\in {\rm Ext}_{\mathscr {A}_2}^{t',*}(\mathbb Z_2,\mathbb Z_2).$ For instance, $h_0^{4}h_5$ is a decomposable element in ${\rm Ext}_{\mathscr {A}_2}^{5,36}(\mathbb Z_2,\mathbb Z_2).$ A non-zero element in ${\rm Ext}_{\mathscr {A}_2}^{t,*}(\mathbb Z_2,\mathbb Z_2)$ is an \textit{indecomposable element} if either $t=1$ or if $t\geq 2$  then its image in ${\rm Ext}_{\mathscr {A}_2}^{t,*}(\mathbb Z_2,\mathbb Z_2)/D^{t, *}$ is non-zero. Here the $\mathbb Z_2$-submodule $D^{t, *}$ is the set of all the decomposable elements in ${\rm Ext}_{\mathscr {A}_2}^{t,*}(\mathbb Z_2,\mathbb Z_2),$ where $D^{1, *} = 0.$ When $t = 5,$ Chen showed in \cite{Chen} that ${\rm Ext}_{\mathscr {A}_2}^{5,*}(\mathbb Z_2,\mathbb Z_2)$ contains eleven $Sq^{0}$-families of indecomposable elements, namely $n_d,\, \chi_d,\, D_1(d),\, H_1(d),\, Q_3(d),\, K_d,\, J_d,\, T_d,\, V_d,\, V'_d,$ and $U_d,$ for $d\geq 0.$ According to Ch\ohorn n, and H\`a \cite{C.H}, the image of $Tr_5^{\mathscr A_2}$ contains every element in the family $\{n_{d-1} = (Sq^{0})^{d-1}(n_0)\}_{d\geq 1}.$  In \cite{Hung}, H\uhorn ng conjectured that \textit{$Q_3(0)$ is detected by $Tr_5^{\mathscr A_2}.$} If this prediction is true, then the image of the fifth transfer contains all the indecomposable elements of the family $\{Q_3(d-2) = (Sq^{0})^{d-2}(Q_3(0))\}_{d\geq 2}$ since the following diagram is commutative:
$$ \begin{diagram}
\node{[P_{\mathscr A_2}((\mathbb P_{n_{d-2}}^{\otimes 5})^{*})]_{GL_5}} \arrow{e,t}{Tr^{\mathscr A_2}_5}\arrow{s,r}{\widetilde{Sq^0}} 
\node{{\rm Ext}_{\mathscr A_2}^{5, 5+n_{d-2}}(\mathbb Z_2, \mathbb Z_2)} \arrow{s,r}{Sq^0}\\ 
\node{[P_{\mathscr A_2}((\mathbb P_{5+2n_{d-2}}^{\otimes 5})^{*})]_{GL_5}} \arrow{e,t}{Tr^{\mathscr A_2}_5} \node{{\rm Ext}_{\mathscr A_2}^{5, 2(5+n_{d-2})}(\mathbb Z_2, \mathbb Z_2).}
\end{diagram}$$
Here $\widetilde{Sq^0}$ denotes Kameko's squaring operation, while $Sq^0$ is the classical squaring operation. At the same time, combining with the calculations by Singer \cite{W.S1}, Boardman \cite{J.B}, Bruner \cite{Bruner}, H\`a \cite{Ha}, and the fact that $Tr^{\mathscr A_2}$ is an algebraic homomorphism, it may be concluded that the following non-zero elements are detected by the algebraic transfer:
$$ \begin{array}{lll}
\medskip
 &h_0Q_3(0)\in {\rm Ext}_{\mathscr A_2}^{6,6+n_2}(\mathbb Z_2, \mathbb Z_2), & h_0^{2}Q_3(0)\in {\rm Ext}_{\mathscr A_2}^{7,7+n_2}(\mathbb Z_2, \mathbb Z_2), \\
\medskip
& h_1Q_3(0)\in {\rm Ext}_{\mathscr A_2}^{6,6+n_2+1}(\mathbb Z_2, \mathbb Z_2), & h_1^{2}Q_3(0)\in {\rm Ext}_{\mathscr A_2}^{7,7+n_2+2}(\mathbb Z_2, \mathbb Z_2),\\
\medskip
& h_2Q_3(0)\in {\rm Ext}_{\mathscr A_2}^{6,6+n_2+3}(\mathbb Z_2, \mathbb Z_2),& h_0h_2Q_3(0)\in {\rm Ext}_{\mathscr A_2}^{7,7+n_2+3}(\mathbb Z_2, \mathbb Z_2),\\
\medskip
& h_2^{2}Q_3(0)\in {\rm Ext}_{\mathscr A_2}^{7,7+n_2+6}(\mathbb Z_2, \mathbb Z_2),& c_0Q_3(0)\in {\rm Ext}_{\mathscr A_2}^{8,8+n_2+8}(\mathbb Z_2, \mathbb Z_2),\\
\medskip
& h_1c_0Q_3(0)\in {\rm Ext}_{\mathscr A_2}^{9,9+n_2+9}(\mathbb Z_2, \mathbb Z_2), & d_0Q_3(0)\in {\rm Ext}_{\mathscr A_2}^{9,9+n_2+14}(\mathbb Z_2, \mathbb Z_2),\\
\medskip
& h_0d_0Q_3(0)\in {\rm Ext}_{\mathscr A_2}^{10,10+n_2+14}(\mathbb Z_2, \mathbb Z_2),&  h_4Q_3(0)\in {\rm Ext}_{\mathscr A_2}^{6,6+n_2+15}(\mathbb Z_2, \mathbb Z_2),\\
\medskip
&  h_0h_4Q_3(0)\in {\rm Ext}_{\mathscr A_2}^{7,7+n_2+15}(\mathbb Z_2, \mathbb Z_2),&  h_0^{2}h_4Q_3(0)\in {\rm Ext}_{\mathscr A_2}^{8,8+n_2+15}(\mathbb Z_2, \mathbb Z_2),\\
\medskip
& h_1h_4Q_3(0)\in {\rm Ext}_{\mathscr A_2}^{7,7+n_2+16}(\mathbb Z_2, \mathbb Z_2), &  h_1^{2}h_4Q_3(0)\in {\rm Ext}_{\mathscr A_2}^{8,8+n_2+17}(\mathbb Z_2, \mathbb Z_2), \\
\medskip
&  e_0Q_3(0)\in {\rm Ext}_{\mathscr A_2}^{9,9+n_2+17}(\mathbb Z_2, \mathbb Z_2),& h_2h_4Q_3(0)\in {\rm Ext}_{\mathscr A_2}^{7,7+n_2+18}(\mathbb Z_2, \mathbb Z_2),\\
\medskip
&  h_0h_2h_4Q_3(0)\in {\rm Ext}_{\mathscr A_2}^{8,8+n_2+18}(\mathbb Z_2, \mathbb Z_2), & h_5Q_3(0)\in {\rm Ext}_{\mathscr A_2}^{6,6+n_2+31}(\mathbb Z_2, \mathbb Z_2), \\
\medskip
& h_2h_5Q_3(0)\in {\rm Ext}_{\mathscr A_2}^{7,7+n_2+34}(\mathbb Z_2, \mathbb Z_2).
\end{array}$$

The above data together with Conjecture \ref{gtP} tempt us to propose: 
\begin{conj}\label{gtP2}
The algebraic transfer is an isomorphism in bidegree $(5, 5+n_d)$ for any $d\geq 0.$
\end{conj}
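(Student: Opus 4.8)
We outline a strategy for proving Conjecture \ref{gtP2}. Write $D_d := [P_{\mathscr A_2}((\mathbb P_{n_d}^{\otimes 5})^{*})]_{GL_5}$ and $E_d := {\rm Ext}_{\mathscr A_2}^{5,5+n_d}(\mathbb Z_2,\mathbb Z_2)$, so that in bidegree $(5,5+n_d)$ the fifth transfer is a $\mathbb Z_2$-linear map $Tr_5^{\mathscr A_2}: D_d\to E_d$ between finite-dimensional spaces. By the computations of Lin \cite{Lin} and Chen \cite{Chen} recalled above, $\dim E_0 = 0$ and $\dim E_d = 2$ for every $d\geq 1$, with $E_1 = \langle h_0^{4}h_5, n_0\rangle$ and $E_d = \langle n_{d-1}, Q_3(d-2)\rangle$ for $d\geq 2$. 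The plan has two parts: (a) establish Conjecture \ref{gtP}, i.e. $\dim D_0 = 0$ and $\dim D_d = 2$ for $d\geq 1$, so that domain and codomain are equidimensional in every bidegree; and (b) show $Tr_5^{\mathscr A_2}$ is surjective in each bidegree $(5,5+n_d)$. Given (a) and (b), a surjection between finite-dimensional spaces of equal dimension is an isomorphism, and Conjecture \ref{gtP2} follows.

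The cases $d = 0$ and $d = 1$ are already in hand. For $d = 0$, $\dim D_0 = 0$ by \cite{D.P2} and $E_0 = 0$ by the table above. For $d = 1$ we have $\dim D_1 = 2$ by Theorem \ref{dlc5}; moreover $h_0^{4}h_5\in {\rm Im}(Tr_5^{\mathscr A_2})$ because $Tr^{\mathscr A_2} = \bigoplus_{t\geq 0}Tr_t^{\mathscr A_2}$ is a homomorphism of algebras and every $h_i$ lies in the image of the isomorphism $Tr_1^{\mathscr A_2}$, while $n_0$ is detected by $Tr_5^{\mathscr A_2}$ by Ch\ohorn n-H\`a \cite{C.H}; surjectivity together with $\dim D_1 = \dim E_1$ then forces the isomorphism, recovering the Corollary above. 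Hence the real work is in the range $d\geq 2$.

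For $d\geq 2$ I would first reduce the determination of $D_d$ to the single degree $n_2 = 67$. Since $\mu(n_d) = 5$ for $d > 2$, the iterated Kameko map is a $\mathbb Z_2GL_5$-isomorphism $(\widetilde{Sq^0_*})^{d-2}: Q^{\otimes 5}_{n_d}\xrightarrow{\cong}Q^{\otimes 5}_{n_2}$, and since $P_{\mathscr A_2}((\mathbb P_n^{\otimes 5})^{*})\cong (Q_n^{\otimes 5})^{*}$ as $GL_5$-modules this yields $D_d\cong D_2$ for all $d\geq 2$; so it suffices to prove $\dim D_2 = 2$. For that I would pin down a monomial basis of $Q^{\otimes 5}_{n_2}$ from the splitting $Q^{\otimes 5}_{n_2}\cong {\rm Ker}[\widetilde{Sq^0_*}]_{n_2}\oplus Q^{\otimes 5}_{31}$, the weight-vector decomposition ${\rm Ker}[\widetilde{Sq^0_*}]_{n_2}\cong (Q_{n_2}^{\otimes 5})^{0}\oplus\bigoplus_{1\leq i\leq 3}(Q_{n_2}^{\otimes 5})^{\overline{\omega}_{(i)}^{>0}}$ recorded in the Discussion above, the identity $\dim(Q_{n_2}^{\otimes 5})^{0} = \sum_{1\leq s\leq 4}\binom{5}{s}\dim(Q_{n_2}^{\otimes s})^{>0}$, Corollary \ref{hq31}, and the homomorphisms $\psi_{(l,\mathscr L)}$ together with the Mothebe-Uys reduction \cite{M.M} used in proving Theorems \ref{dlc2} and \ref{dlc3}, with the MAGMA algorithm of the Appendix serving as an independent check. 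Then, with the $GL_5$-action on this basis made explicit, I would solve the resulting linear system for the invariants of the dual, as in \cite{D.P2} and in the proof of Theorem \ref{dlc5}; this should give $\dim D_2 = 2$, and hence $\dim D_d = 2$ for all $d\geq 1$, completing part (a).

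For part (b) with $d\geq 2$ I would argue that ${\rm Im}(Tr_5^{\mathscr A_2})$ contains both generators of $E_d = \langle n_{d-1}, Q_3(d-2)\rangle$. The first, $n_{d-1}$, is detected by the fifth transfer for every $d\geq 1$ by Ch\ohorn n-H\`a \cite{C.H}. For the second I would invoke a proof of H\uhorn ng's conjecture \cite{Hung} that $Q_3(0)$ is detected by $Tr_5^{\mathscr A_2}$ in bidegree $(5,5+n_2)$; iterating the commutative square that relates Kameko's $\widetilde{Sq^0}$ on $D_d$ with the classical $Sq^0$ on ${\rm Ext}_{\mathscr A_2}^{5,*}(\mathbb Z_2,\mathbb Z_2)$, and using $Q_3(d-2) = (Sq^0)^{d-2}Q_3(0)\neq 0$, would propagate detection to bidegree $(5,5+n_d)$ for all $d\geq 2$, giving surjectivity and hence, by part (a), an isomorphism. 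The hard part will be exactly these two inputs: the hit-problem and invariant-theory computation proving $\dim D_2 = 2$ (Conjecture \ref{gtP} for $d\geq 2$), delicate because $n_2 = 67$ is a large non-generic degree well beyond what is currently tractable by hand; and H\uhorn ng's conjecture that $Q_3(0)$ lies in the image of the fifth transfer, for which one would need either an explicit cycle representative mapping onto $Q_3(0)$ under Chen's description of ${\rm Ext}_{\mathscr A_2}^{5,*}(\mathbb Z_2,\mathbb Z_2)$, or, as an alternative route to (b), a direct proof that $Tr_5^{\mathscr A_2}$ is injective in these bidegrees, which is Singer's conjecture in rank five and itself open.
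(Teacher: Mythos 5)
The statement you are asked to prove is stated in the paper only as a conjecture (Conjecture \ref{gtP2}); the paper offers no proof beyond the observation that the cases $d\in\{0,1\}$ follow from Theorem \ref{dlc5} and the Lin--Chen computation, and explicitly defers the rest to future work. Your proposal is therefore best judged as a strategy, and as such it faithfully reproduces the paper's own roadmap: reduction via the iterated Kameko isomorphism to the single degree $n_2=67$, the splitting $Q^{\otimes 5}_{n_2}\cong {\rm Ker}[\widetilde{Sq^0_*}]_{n_2}\oplus Q^{\otimes 5}_{31}$ with the three weight vectors $\overline{\omega}_{(i)}$, the role of $n_{d-1}$ and $Q_3(d-2)$ as the two generators of ${\rm Ext}_{\mathscr A_2}^{5,5+n_d}$, and the commutative square intertwining $\widetilde{Sq^0}$ with the classical $Sq^0$ to propagate detection of $Q_3(0)$ upward. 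Your dimension bookkeeping ($n_2=67$, $\dim E_d=2$ for $d\geq 1$, surjection plus equidimensionality implies isomorphism) is correct.

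The genuine gap is that the argument is conditional on two inputs, neither of which you prove and neither of which is proved in the paper or in the cited literature: (a) Conjecture \ref{gtP} for $d\geq 2$, i.e.\ $\dim[P_{\mathscr A_2}((\mathbb P_{n_2}^{\otimes 5})^{*})]_{GL_5}=2$, which requires a full admissible-monomial basis of $Q^{\otimes 5}_{67}$ and an explicit $GL_5$-invariant computation that the paper itself describes as "rather non-trivial" and does not carry out (in particular the subspace $(Q_{n_2}^{\otimes 5})^{\overline{\omega}_{(1)}^{>0}}$ is flagged as the difficult piece); and (b) H\uhorn ng's conjecture that $Q_3(0)$ is detected by $Tr_5^{\mathscr A_2}$, which is open. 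You name both honestly, but naming them does not close them, so the proposal reduces one open conjecture to two others rather than proving the statement. To the extent a referee can evaluate a blind attempt at an open conjecture, your reduction is sound and matches the intended route; it is simply not a proof.
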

Obviously, the conjecture is true for $d \in \{0, 1\}.$ Computationally the above issues seem rather non-trivial and we hope to come back to them in another work. Summarizing, although our work does not apparently lead to a general rule for determining the space \eqref{kgvt2}, we feel that it represents an interesting note about an efficient approach to understand the structure of those spaces and their applications.  Perhaps a continuation of our methods may provefruitful.  It is our belief that further research canspring from these ideas.

\begin{acknow}
This work is supported by National Foundation for Science and Technology Development (NAFOSTED) of Vietnam under grant number 101.04-2017.05. The author would like to gratefully and sincerely thank Prof. N. Sum, for many enlightening e-mail exchanges.

I am very grateful to the anonymous referees for carefully reading the manuscript and for offering comments with suggestions that allowed me to substantially improve the article. I would also like to express my thankfulness to Prof. R. Bruner, for his help about the algorithm in MAGMA. 
\end{acknow}

\section{Proofs of main results}\label{s2}

To facilitate this, we will start in this section by recalling a few useful preliminaries on strictly inadmissible and spike monomials, and reminding the reader of something important.

\begin{defn}[{\bf Strictly inadmissible monomial}]\label{dnkcndc} A monomial $X\in\mathbb P_n^{\otimes t}$ is said to be {\it strictly inadmissible} if and only if there exist monomials $Y_1, Y_2,\ldots, Y_k$ in $\mathbb P_n^{\otimes t}$  such that $Y_j < X$ for $1\leq j \leq k$ and 
$X = \sum_{1\leq j\leq k}Y_j + \sum_{1\leq  l\leq  2^s - 1}Sq^{l}(Z_l),$ where $s = {\rm max}\{i\in\mathbb Z: \omega_i(X) > 0\}$ and suitable polynomials $Z_l\in \mathbb P_{n-l}^{\otimes t}.$
\end{defn}

It would be helpful for readers to note that due to Definitions \ref{dninadm} and \ref{dnkcndc}, each strictly inadmissible monomial is inadmissible. Conversely, it is, however in general, not true;  for instance, $X = x_1x_2^2x_3^2x_4^2x_5^2x_6\in \mathbb P^{\otimes 6}_{10}$ is an inadmissible monomial, but it is not strictly inadmissible.

\begin{defn}[{\bf Spike monomial}]
A monomial $Z = \prod_{1\leq j\leq t}x_j^{u_j}$ in $\mathbb P_n^{\otimes t}$ is called a {\it spike} if every exponent $u_j$ is of the form $2^{\lambda_j} - 1.$ Specifically, if the exponents $\lambda_j$ can be arranged to satisfy $\lambda_1 > \lambda_2 > \ldots > \lambda_{s-1}\geq \lambda_s \geq 1,$ where only the last two smallest exponents can be equal, and $\lambda_j = 0$ for $ s < j  \leq t,$ then $Z$ is called a {\it minimal spike}.
\end{defn}

The following technical theorems are very important throughout this paper.

\begin{thm}[see Kameko \cite{M.K}]\label{dlKS}
Let $X, Y$ be monomials in $\mathbb P_n^{\otimes t}$ and let $s$ be a positive integer. Assume that there is an index $j > s$ such that $\omega_j(X) = 0.$ Then, if $Y$ is an inadmissible monomial, so is $XY^{2^s}.$
\end{thm}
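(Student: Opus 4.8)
The plan is to prove the contrapositive in spirit: assuming $Y$ is inadmissible, I exhibit $XY^{2^s}$ as a sum of strictly smaller monomials modulo $\overline{\mathscr A_2}\mathbb P^{\otimes t}_N$, where $N = \deg(XY^{2^s})$. Since $Y$ is inadmissible, by Definition \ref{dninadm} there are monomials $Z_1,\dots,Z_k$ with $Z_i < Y$ for all $i$ and $Y \equiv \sum_i Z_i$, i.e. $Y - \sum_i Z_i = \sum_{a\geq 0} Sq^{2^a}(W_a)$ for suitable $W_a$. The first key step is to apply the $(\,\cdot\,)^{2^s}$-Frobenius together with the Cartan formula: because $p \mapsto p^{2^s}$ followed by multiplication by the fixed monomial $X$ interacts controllably with the Steenrod squares, I want to show that $X\cdot Y^{2^s} \equiv \sum_i X\cdot Z_i^{2^s}$ in $\mathbb P^{\otimes t}_N$. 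Concretely, $Sq^{2^a}(W_a)^{2^s} = (Sq^{2^a}W_a)^{2^s}$ is not literally hit, but $(\sum_b Sq^b W_a)$-type identities combined with the fact that for any polynomial $f$ one has $f^{2^s}$ lying in the image of the squares generated by lower operations on $f^{2^{s-1}}$-style arguments — more precisely the standard fact that $Sq^{j}(g^{2}) = (Sq^{j/2}g)^2$ for $j$ even and $0$ for $j$ odd — lets me rewrite $\big(\sum_a Sq^{2^a}W_a\big)^{2^s}$, after multiplying by $X$, as a genuine element of $\overline{\mathscr A_2}\mathbb P^{\otimes t}_N$. This is where the hypothesis that there is an index $j>s$ with $\omega_j(X)=0$ enters: it guarantees that when I expand $X Y^{2^s}$ via Cartan and collect terms, the "error" terms coming from squares hitting $X$ rather than $Y^{2^s}$ either vanish or are absorbed, because the weight vector of $X$ has a zero in position $j$, so no carries propagate past the $2^s$-th dyadic digit when $X$ is multiplied against a $2^s$-th power.

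The second key step is the order comparison: I must check that each surviving term $X Z_i^{2^s}$ (and each auxiliary monomial produced in the rewriting) is strictly less than $XY^{2^s}$ in the order of Definition~2.2. For the weight vector, note $\omega(XY^{2^s})$ is obtained by placing $\omega(X)$ in the low digits and $\omega(Y)$ shifted up by $s$, interleaved; since $\omega_j(X) = 0$ for some $j > s$ there is no overlap/carry, so $\omega(XZ_i^{2^s}) = \omega(X) \,\text{(shifted)}\, + \omega(Z_i)\,\text{(shifted by }s)$ and $Z_i < Y$ forces either $\omega(Z_i) < \omega(Y)$, giving $\omega(XZ_i^{2^s}) < \omega(XY^{2^s})$ directly, or $\omega(Z_i) = \omega(Y)$ with $u(Z_i) < u(Y)$, and then $u(XZ_i^{2^s}) < u(XY^{2^s})$ because the exponent vector of $X$ sits in a disjoint bit-range from that of the $2^s$-th powers. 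Either way $XZ_i^{2^s} < XY^{2^s}$, which is exactly what Definition~\ref{dninadm} demands for $XY^{2^s}$ to be inadmissible.

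I expect the main obstacle to be making the first step — the commutation of the Frobenius $2^s$-th power with the "hit" relation in the presence of the fixed factor $X$ — fully rigorous, rather than just for $X = 1$. For $X=1$ it is essentially the classical observation that $\big(\overline{\mathscr A_2}\mathbb P^{\otimes t}_n\big)^{2^s} \subseteq \overline{\mathscr A_2}\mathbb P^{\otimes t}_{2^s n}$, which follows from $Sq^{2b}(g^2) = (Sq^b g)^2$ iterated $s$ times. Multiplying by $X$ requires showing $X \cdot \overline{\mathscr A_2}\mathbb P^{\otimes t}_{2^s n} \subseteq \overline{\mathscr A_2}\mathbb P^{\otimes t}_N$ after using the Cartan formula to move $X$ inside, and it is precisely here that one needs the hypothesis $\omega_j(X) = 0$ for some $j > s$: it ensures $X$ can be written so that in every Cartan splitting $Sq^{2^a} = \sum Sq^{a'} \otimes Sq^{a''}$ the part landing on $X$ contributes degrees below $2^{s}$ in a way that reassembles into $\overline{\mathscr A_2}$. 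I would handle this by induction on $s$, reducing to the single-square case $s=1$ and citing the elementary identity above, and by writing $X = X' \cdot (\text{spike-free remainder})$ to isolate the bit-range argument. Once the containment $X\cdot(\text{hit})^{2^s} \subseteq \text{hit}$ is in hand, the rest is the bookkeeping of the previous paragraph.
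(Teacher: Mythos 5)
The paper offers no proof of Theorem~\ref{dlKS} for you to be compared against: it is quoted from Kameko's thesis without argument. Judged on its own, your proposal has the right skeleton (write $Y-\sum_i Z_i=\sum_a Sq^{2^a}(W_a)$, push this through the $2^s$-th power and the multiplication by $X$, then compare orders), but both substantive steps are missing or wrong. The containment you say you will prove, $X\cdot\overline{\mathscr A_2}\mathbb P^{\otimes t}_{2^sn}\subseteq\overline{\mathscr A_2}\mathbb P^{\otimes t}_N$, is false for a fixed monomial $X$ (already $x_1\cdot Sq^1(x_1)=x_1^3$ is not hit). What the Cartan formula actually gives is
$$X\,\big(Sq^{2^a}W\big)^{2^s}=Sq^{2^{a+s}}\big(XW^{2^s}\big)+\sum_{m\ge 1}Sq^{2^sm}(X)\,\big(Sq^{2^a-m}W\big)^{2^s},$$
and the entire content of the theorem is that every monomial occurring in the correction terms with $m\ge 1$ is strictly smaller than $XY^{2^s}$. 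Your write-up defers exactly this point to ``induction on $s$'' and an unspecified ``bit-range argument,'' so the heart of the proof is never supplied.

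Second, the order comparison you do sketch rests on a false premise. A single vanishing entry $\omega_j(X)=0$ with $j>s$ does not prevent carries in the exponents of $XZ_i^{2^s}$: for $X=x_1^3$, $Z_i=x_1$, $s=1$ one has $\omega_3(X)=0$ yet $\omega(x_1^{3+2})=(1,0,1)$, not $(1,1)+(0,1)=(1,2)$. The interleaving/no-carry decomposition you invoke requires $\omega_j(X)=0$ for \emph{every} $j>s$, i.e.\ all exponents of $X$ less than $2^s$ --- which is the hypothesis under which Kameko's criterion actually holds and is what the paper verifies each time it applies the theorem. Under the literal ``there exists'' reading the statement itself is false: take $t=2$, $s=1$, $X=x_1^3x_2$, $Y=x_1^2x_2$. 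Then $\omega(X)=(2,1,0,\dots)$, so $\omega_3(X)=0$ with $3>s$; $Y$ is inadmissible, since $Sq^1(x_1x_2)=x_1^2x_2+x_1x_2^2$ gives $Y\equiv x_1x_2^2<Y$; yet $XY^{2}=x_1^{7}x_2^{3}$ is a spike, hence admissible by Theorem~\ref{dlPS}. (The condition $\deg X=\deg Y$ in the statement is immaterial; the paper itself applies the theorem with $\deg X\neq\deg Y$ throughout.) So no argument can close your gap with the hypothesis as you have taken it; a correct proof must use the universal form of the hypothesis, and your proposal never identifies where it enters.
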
 

\begin{thm}[see Ph\'uc-Sum \cite{P.S1}]\label{dlPS}
All the spikes in $\mathbb P_n^{\otimes t}$ are admissible and their weight vectors are weakly decreasing. Furthermore, if a weight vector $\omega = (\omega_1, \omega_2, \ldots)$ is weakly decreasing and $\omega_1\leq t,$ then there is a spike $Z$ in $\mathbb P_n^{\otimes t}$ such that $\omega(Z) = \omega.$
\end{thm}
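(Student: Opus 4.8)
The plan is to prove the three assertions in turn: the weight vector of a spike is weakly decreasing, every spike is admissible, and every weakly decreasing weight vector with first entry $\le t$ is realized by a spike. First I would record the shape of $\omega(Z)$: writing a spike of degree $n$ as $Z = \prod_{j=1}^{t} x_j^{2^{\lambda_j}-1}$ with $\lambda_j \ge 0$, and using that $\alpha_{i-1}(2^{\lambda_j}-1) = 1$ precisely when $i \le \lambda_j$, one reads off $\omega_i(Z) = \#\{\, j : \lambda_j \ge i\,\}$, which is non-increasing in $i$; moreover $\omega_1(Z) = \#\{\, j : u_j \ne 0\,\} \le t$ and $\deg(\omega(Z)) = \sum_{i\ge 1} 2^{i-1}\#\{\, j : \lambda_j \ge i\,\} = \sum_{j}(2^{\lambda_j}-1) = n$. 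This settles weak decrease and records an identity used later.

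For admissibility I would pass to the dual. Let $H_* = H_*((\mathbb Z_2)^{\times t};\mathbb Z_2) = \Gamma[a_1,\dots,a_t]$, the divided power algebra, whose monomial basis $\{\prod_j a_j^{(b_j)}\}$ is dual to the monomial basis $\{\prod_j x_j^{b_j}\}$ of $\mathbb P^{\otimes t}$. A one–variable computation gives $Sq^r_*(a^{(k)}) = \binom{k-r}{r}\,a^{(k-r)}$, and for $k = 2^{\lambda}-1$ and $0 < r < 2^{\lambda}$ there are no carries in $r + (2^{\lambda}-1-r) = 2^{\lambda}-1$, so the binary digits of $r$ and of $2^{\lambda}-1-r$ are disjoint and $\binom{2^{\lambda}-1-r}{r} \equiv 0 \pmod 2$; hence $Sq^r_*\,a^{(2^{\lambda}-1)} = 0$ for all $r > 0$. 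By the Cartan formula on each tensor factor, the element $b := \prod_j a_j^{(2^{\lambda_j}-1)}$ is then annihilated by every $Sq^r_*$ with $r > 0$, i.e.\ $b$ is primitive, so $\langle b,-\rangle$ vanishes on $\overline{\mathscr A_2}\mathbb P^{\otimes t}_n = \sum_{i\ge 0}{\rm Im}(Sq^{2^i})$ and descends to a functional on $Q^{\otimes t}_n$. If $Z$ were inadmissible, say $Z \equiv \sum_i Y_i$ with $Y_i < Z$ (so in particular $Y_i \ne Z$), then pairing with $b$ — the dual basis vector of the monomial $Z$ — would give $1 = \langle b,[Z]\rangle = \sum_i \langle b, Y_i\rangle = 0$, a contradiction. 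Hence $Z$ is admissible; note this argument uses only $Y_i \ne Z$, not the ordering on monomials.

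Finally, for the realization statement, given a weakly decreasing $\omega = (\omega_1 \ge \omega_2 \ge \cdots)$ of degree $n$ with $\omega_1 \le t$, I would form the conjugate sequence $\lambda^{(k)} := \#\{\, i : \omega_i \ge k\,\}$ for $k \ge 1$; this is weakly decreasing, satisfies $\lambda^{(k)} > 0$ exactly for $1 \le k \le \omega_1$, and, because $\omega$ is weakly decreasing, one checks $\#\{\, k : \lambda^{(k)} \ge i\,\} = \#\{\, k : \omega_i \ge k\,\} = \omega_i$ for every $i$. Then $Z := \prod_{k=1}^{\omega_1} x_k^{2^{\lambda^{(k)}}-1} \in \mathbb P^{\otimes t}$ — which uses only $\omega_1 \le t$ of the variables — is a spike with $\omega(Z) = \omega$, and its degree is $\sum_k (2^{\lambda^{(k)}}-1) = \deg(\omega) = n$ by the identity of the first step, as required.

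\textbf{Main obstacle.} The only genuinely delicate point is the admissibility of spikes; the weak-decrease and realization parts are bookkeeping with binary expansions and conjugate partitions. The crux is the parity fact $\binom{2^{\lambda}-1-r}{r} \equiv 0 \pmod 2$ for $0 < r < 2^{\lambda}$, combined with the observation that the monomial basis of $\mathbb P^{\otimes t}$ and the divided-power-monomial basis of $H_*$ are dual, so that a monomial whose dual vector is primitive cannot be congruent to any sum of other monomials. One could instead argue ``primally'' by induction on $t$, using that $Sq^{2^s}$ produces only weight-vector-comparable terms (cf.\ Theorem \ref{dlKS}) and the base case that $x^{2^{\lambda}-1}$ generates $\mathbb P^{\otimes 1}$ in its degree, but that route is considerably more laborious.
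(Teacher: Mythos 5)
The paper does not prove Theorem \ref{dlPS}; it imports it verbatim from Ph\'uc--Sum \cite{P.S1}, so there is no internal proof to compare yours against. Your argument is correct and is essentially the standard one: the identity $\omega_i(Z)=\#\{j:\lambda_j\ge i\}$ gives weak decrease and the degree identity, the conjugate-partition construction realizes any weakly decreasing $\omega$ with $\omega_1\le t$, and admissibility follows because $\binom{2^{\lambda}-1-r}{r}\equiv 0 \pmod 2$ for $0<r<2^{\lambda}$ makes the dual divided-power class $\prod_j a_j^{(2^{\lambda_j}-1)}$ annihilated by all positive $Sq^r_*$, so it descends to a functional on $Q^{\otimes t}_n$ detecting $[Z]$ against any sum of monomials $Y_i\ne Z$. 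The only cosmetic quibble is the phrase ``primitive'' (you mean $\overline{\mathscr A_2}$-annihilated, as you then say) and the slightly compressed step where disjointness of the digits of $r$ and $2^{\lambda}-1-r$ plus $r>0$ is what forces the Lucas product to vanish; both are fine as written.
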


\begin{thm}[see Singer \cite{W.S2}]\label{dlSin}
Suppose that $X\in \mathbb P_n^{\otimes t}$ is a monomial of degree $n,$ where $\mu(n)\leq t.$ Let $Z$ be the minimal spike of degree $n$ in $\mathbb P_n^{\otimes t}.$ If $\omega(X) < \omega(Z),$ then $X\equiv 0.$ 
\end{thm}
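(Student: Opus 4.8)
# Proof Proposal for Theorem \ref{dlSin}

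The plan is to prove the logically equivalent assertion that $(Q_n^{\otimes t})^{\omega} = 0$ whenever $\omega < \omega(Z)$, where $Z$ is the minimal spike of degree $n$, by a nested induction: an outer induction on the degree $n$ and, for fixed $n$, an inner induction on the weight vector $\omega$ in the left‑lexicographic order. Two properties of the minimal spike underlie the argument. First, by Theorem~\ref{dlPS} the vector $\omega(Z)$ is weakly decreasing and $\omega_1(Z)=\mu(n)$ (the number of dyadic parts $2^{d_i}-1$ in a minimal representation of $n$). Second, minimal spikes are self‑similar under ``halving'': if $\omega_1(Z)=\mu(n)$ and $m:=(n-\mu(n))/2$, then $\mu(m)=\omega_2(Z)$ and the minimal spike $Z_m\in\mathbb P_m^{\otimes t}$ has weight vector $\omega(Z_m)=(\omega_2(Z),\omega_3(Z),\ldots)$; this rests on an elementary recursion satisfied by the arithmetic function $\mu$. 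I will also use the obvious shift of weight vectors: if a monomial $X$ has exactly $s=\omega_1(X)$ odd exponents and $X=x_{i_1}\cdots x_{i_s}\,Y^2$, then $\omega(X)=(s,\omega_1(Y),\omega_2(Y),\ldots)$.

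Now fix a monomial $X\in\mathbb P_n^{\otimes t}$ with $\omega(X)<\omega(Z)$ and let $r\geq 1$ be the least index with $\omega_r(X)<\omega_r(Z)$. In the principal case $r\geq 2$ we have $\omega_1(X)=\omega_1(Z)=\mu(n)$, so $X=M\,Y^2$ with $M=x_{i_1}\cdots x_{i_{\mu(n)}}$ the product of the variables carrying odd exponents and $Y\in\mathbb P_m^{\otimes t}$, $m=(n-\mu(n))/2<n$. By the two properties above $\omega(Y)<\omega(Z_m)$ (first discrepancy at index $r-1$), so the outer induction gives $Y\equiv 0$, say $Y=\sum_l Sq^{2^{i_l}}(W_l)$. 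Using the identities $(Sq^aW)^2=Sq^{2a}(W^2)$ and ``$Sq^{\mathrm{odd}}$ annihilates squares'' together with the Cartan formula applied to $Sq^{2^{i_l+1}}(MW_l^2)$, one obtains
\[
X \;=\; MY^2 \;=\; \sum_l M\cdot Sq^{2^{i_l+1}}(W_l^2) \;=\; \sum_l Sq^{2^{i_l+1}}(MW_l^2) \;+\; (\text{correction terms}),
\]
in which every correction term is a product of a monomial having strictly fewer than $\mu(n)$ odd exponents with a perfect square; hence each correction term has weight vector strictly smaller than $\omega(X)$, and is therefore $\equiv 0$ by the inner induction. Thus $X\equiv 0$.

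It remains to treat the base of the inner induction and the case $r=1$, i.e.\ $\omega_1(X)=s<\mu(n)$. When $s=0$ (so $n$ is even and $X=Y^2$ is a perfect square, $X\neq 1$) the monomial is manifestly hit: choosing a variable $x_j$ with even exponent $a_j\geq 2$, the derivation property of $Sq^1$ gives $X=Sq^1\!\big(x_j^{a_j-1}\cdot X/x_j^{a_j}\big)$, since $Sq^1$ kills the remaining all‑even‑exponent factor. For $0<s<\mu(n)$ one first disposes of the sub‑case where $X$ involves fewer than $\mu(n)$ variables, which is hit by Wood's vanishing theorem \cite{R.W} applied in the smaller polynomial algebra; the remaining sub‑case is reduced, by peeling off odd exponents and a secondary downward induction on $\mu(n)-\omega_1(X)$ (again producing only correction terms of strictly smaller weight vector), to the cases already settled. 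Finally, for a weight vector $\omega<\omega(Z)$ that is minimal for the inner induction, one shows $(Q_n^{\otimes t})^\omega=0$ directly, by exhibiting enough $Sq^1$‑relations among the finitely many monomials of weight exactly $\omega$ to force the whole space to vanish.

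The step I expect to be most delicate is precisely this last one: the $Sq^1$‑reductions can keep the weight vector fixed, so the inner induction does not immediately apply \emph{within} a single weight class $\omega$, and one must analyse those relations by hand; in the same vein one must check carefully that the Cartan correction terms in the principal case genuinely have strictly smaller weight vector. Verifying the $\mu$‑recursion underpinning the self‑similarity of minimal spikes, while elementary, is another point requiring care. All of these are combinatorial rather than conceptual, so the strategy should go through.
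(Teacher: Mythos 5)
The paper does not actually prove Theorem \ref{dlSin}; it is quoted from Singer \cite{W.S2}, so your proposal has to stand on its own. Most of it does: the self-similarity of the minimal spike under halving is a correct (and standard) property of $\mu$, the principal case $\omega_1(X)=\mu(n)$ with first discrepancy at index $r\geq 2$ is handled correctly (the Cartan correction terms $Sq^{a}(M)\,(Sq^{b/2}W_l)^{2}$ with $a\geq 1$ really do have $\omega_1=\mu(n)-a<\omega_1(X)$, hence strictly smaller weight vector), and the sub-cases $\omega_1(X)=0$ and ``fewer than $\mu(n)$ variables'' are fine.

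The genuine gap is the sub-case $0<\omega_1(X)=s<\mu(n)$ with $X$ involving at least $\mu(n)$ variables, which you dispatch with the phrase ``peeling off odd exponents and a secondary downward induction on $\mu(n)-\omega_1(X)$.'' This is not an argument, and the obvious ways to instantiate it fail. Writing $X=MY^{2}$ with $M$ the squarefree odd part, the outer induction tells you nothing about $Y$: take $t\geq 4$, $n=11$, $X=x_1x_2^{2}x_3^{2}x_4^{6}$, so that $\omega(X)=(1,3,1)<(3,2,1)=\omega(Z)$ and $s=1<3=\mu(11)$, yet $Y=x_2x_3x_4^{3}$ is a spike of degree $5$ and therefore is not hit. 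Nor can you trade $X$ for monomials with larger $\omega_1$: those have \emph{larger} weight vectors, so they lie outside the inner induction hypothesis, and your secondary induction would bottom out at $\omega_1=\mu(n)$, where the resulting monomials need not satisfy $\omega<\omega(Z)$ at all. Worse, this sub-case is load-bearing: the correction terms produced in your principal case have $\omega_1<\mu(n)$ and are discharged to the inner induction hypothesis, i.e.\ they land exactly in the unproved sub-case. The standard way to close the gap is the $\chi$-trick: since $Y^{2}=Sq^{m'}(Y)$ with $m'=\deg Y=(n-s)/2$, one has $MY^{2}\equiv\chi(Sq^{m'})(M)\cdot Y$, and $\chi(Sq^{m'})(x_{i_1}\cdots x_{i_s})=\sum x_{i_1}^{2^{d_1}}\cdots x_{i_s}^{2^{d_s}}$, summed over tuples with $\sum_{j}(2^{d_j}-1)=m'$; this vanishes precisely when $\mu(m')>s$, which is equivalent to $\mu(n)>s$ because $\alpha(n+s)=\alpha(2(m'+s))=\alpha(m'+s)$. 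That single identity disposes of the whole case $\omega_1(X)<\mu(n)$ (including $s=0$) and also eliminates the separate ``base case of the inner induction'' you were worried about. Without it, or an equivalent, your proof does not close.
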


\begin{thm}[see Mothebe-Uys \cite{M.M}]\label{dlMU}
Let $l, d$ be positive integers such that $1\leq l\leq t.$ If $X$ belongs to $\mathscr {C}^{\otimes (t-1)}_{n},$ then $x_l^{2^{d}-1}\mathsf{q}_{(l,\,t)}(X)$ belongs to $\mathscr {C}^{\otimes t}_{n + 2^{d}-1}.$
\end{thm}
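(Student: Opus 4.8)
The statement to prove is Theorem~\ref{dlMU} (Mothebe--Uys): if $X\in\mathscr C^{\otimes(t-1)}_n$ is admissible, then $x_l^{2^d-1}\mathsf q_{(l,t)}(X)$ is admissible in $\mathbb P^{\otimes t}_{n+2^d-1}$.

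\bigskip

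The plan is to reduce the claim to the special case $l=t$, and then argue by contradiction using the order structure together with the fact that $\mathsf q_{(t,t)}$ is an $\mathscr A_2$-homomorphism. First I would record the reduction: for general $l$, the monomial $x_l^{2^d-1}\mathsf q_{(l,t)}(X)$ is obtained from $x_t^{2^d-1}\mathsf q_{(t,t)}(X')$ by the permutation of variables sending $x_t\mapsto x_l$ and shifting $x_l,\dots,x_{t-1}$ up by one, where $X'$ is the correspondingly permuted monomial in $\mathbb P^{\otimes(t-1)}_n$. Since permuting the variables is an $\mathscr A_2$-automorphism of $\mathbb P^{\otimes t}$ that is order-preserving in the sense relevant to admissibility (it carries admissible monomials to admissible monomials — this follows because the weight vector is permutation-invariant and one can track the exponent vector), it suffices to treat $l=t$. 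So from now on write $\mathsf q:=\mathsf q_{(t,t)}\colon \mathbb P^{\otimes(t-1)}\hookrightarrow \mathbb P^{\otimes t}$, the inclusion of polynomial rings sending $x_j\mapsto x_j$, and we must show $x_t^{2^d-1}\mathsf q(X)$ is admissible whenever $X$ is.

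\bigskip

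The core argument is a proof by contradiction. Suppose $Y:=x_t^{2^d-1}\mathsf q(X)$ is inadmissible. Then there are monomials $Z_1,\dots,Z_k < Y$ in $\mathbb P^{\otimes t}_{n+2^d-1}$ with $Y\equiv \sum_j Z_j$, i.e.\ $Y+\sum_j Z_j\in\overline{\mathscr A_2}\mathbb P^{\otimes t}_{n+2^d-1}$. The key observation is about the structure of the $x_t$-exponent. Since $X\in\mathbb P^{\otimes(t-1)}_n$ involves only $x_1,\dots,x_{t-1}$, every monomial appearing in $Y$ has $x_t$-exponent exactly $2^d-1$. Now I would analyze what the Steenrod operations can do: applying $Sq^{2^i}$ via the Cartan formula to a monomial in $\mathbb P^{\otimes t}_{n+2^d-1}$, the resulting monomials have $x_t$-exponents obtained by adding some $2^i$ to the old exponents in various ways. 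One wants to isolate the ``$x_t$-exponent $=2^d-1$'' part. Here the crucial input is Theorem~\ref{dlKS} (Kameko) in an appropriately dualized/contrapositive form: more precisely, I would use the standard fact (a companion to Kameko's theorem) that hitness and admissibility interact well with the operation of factoring out a fixed power of one variable. Concretely, there is a $\mathbb Z_2$-linear ``$x_t$-grading'' on $\mathbb P^{\otimes t}$ by the exponent of $x_t$; the submodule spanned by monomials with $x_t$-exponent exactly $2^d-1$ is not $\mathscr A_2$-stable, but one checks that if $w\in\overline{\mathscr A_2}\mathbb P^{\otimes t}_N$ then its component in $x_t$-degree $2^d-1$ lies in $x_t^{2^d-1}\cdot\bigl(\overline{\mathscr A_2}\mathbb P^{\otimes(t-1)}_n\bigr) + (\text{terms coming from }Sq\text{ hitting }x_t^{\text{lower}})$; using that $2^d-1 = \sum_{0\le i<d}2^i$ has all low-order binary digits equal to $1$, the only way a squaring operation produces $x_t$-exponent $2^d-1$ from a strictly smaller one is forbidden for $Sq^{2^i}$ since that would require the old exponent $2^d-1-2^i$ to have a $1$ in the $2^i$ place, which it does — so this needs care. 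This is the main obstacle: \emph{cleanly extracting the fact that the ``$x_t^{2^d-1}$-part'' of a hit polynomial comes from a hit polynomial in the remaining variables}, which is exactly the technical heart of Mothebe--Uys and where the hypothesis that $2^d-1$ is one less than a power of $2$ is essential.

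\bigskip

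Granting that extraction, I would finish as follows. From $Y+\sum_j Z_j\in\overline{\mathscr A_2}\mathbb P^{\otimes t}_{n+2^d-1}$, split each $Z_j = x_t^{2^d-1}\mathsf q(Z_j') + (\text{monomials with }x_t\text{-exponent}\ne 2^d-1) + (\text{monomials with }x_t\text{-exponent }2^d-1\text{ but involving }x_t\text{ nontrivially inside }\mathsf q\text{-part})$; taking the $x_t$-degree-$(2^d-1)$ part and applying the extraction lemma yields $X + \sum_j Z_j' \equiv 0$ in $\mathbb P^{\otimes(t-1)}_n$ for the resulting monomials $Z_j'$ (some $Z_j'$ may be zero). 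Then I would check that each nonzero $Z_j' < X$ in the order on $\mathbb P^{\otimes(t-1)}_n$: this is where one uses that $\omega(\mathsf q(Z_j')) = \omega(Z_j')$ (an immediate property of the inclusion $\mathsf q$) together with $\omega(Y) = \omega(x_t^{2^d-1}) + \omega(\mathsf q(X))$ and the specific shape of the order $<$ in the two Definitions — namely $Z_j < Y$ forces $\omega(Z_j)\le \omega(Y)$, and comparing the $x_t$-degree-$(2^d-1)$ components forces $\omega(Z_j')\le\omega(X)$, with the exponent-vector tiebreak passing through $\mathsf q$ faithfully since $\mathsf q$ just relabels. Hence $X\equiv\sum_j Z_j'$ with all $Z_j' < X$, contradicting admissibility of $X$. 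Therefore $Y$ is admissible. Throughout, I expect the only genuinely delicate point to be the extraction lemma in the previous paragraph; everything else is bookkeeping with the definitions of weight vector, exponent vector, the order $<$, and the Cartan formula, and can be compressed considerably once the key lemma is in place.
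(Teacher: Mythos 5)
First, a point of reference: the paper offers no proof of Theorem \ref{dlMU} at all --- it is imported verbatim from Mothebe--Uys \cite{M.M} as a quoted tool --- so there is no internal argument to compare yours against, and your proposal has to stand on its own. Your overall strategy is the right one: project a hitting relation for $x_l^{2^{d}-1}\mathsf{q}_{(l,\,t)}(X)$ onto the graded piece where the $x_l$-exponent is exactly $2^{d}-1$, and deduce from it a relation expressing $X$ as a sum of smaller monomials modulo $\overline{\mathscr A_2}\mathbb P^{\otimes (t-1)}_n$. The order bookkeeping at the end is also essentially correct, and it works for every $l$, because the inserted exponent $2^{d}-1$ occupies the same slot in the exponent vectors of $Y=x_l^{2^{d}-1}\mathsf{q}_{(l,\,t)}(X)$ and of any competitor with the same $x_l$-exponent, so the left-lexicographic comparison passes through $\mathsf{q}_{(l,\,t)}$ unchanged.

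There are, however, two genuine problems. The decisive one is that the extraction lemma --- which you yourself call ``the main obstacle'' and ``exactly the technical heart'' --- is never proved; ``granting that extraction'' is not a proof. Moreover, the one computation you attempt there is wrong: you assert that $2^{d}-1-2^{i}$ has a $1$ in the $2^{i}$ place, when it has a $0$ there (subtracting $2^{i}$ from $2^{d}-1$ clears that digit with no borrow), and this vanishing is precisely what makes the lemma work. The clean statement you need is that $\binom{2^{d}-1-j}{j}\equiv 0 \pmod 2$ for every $j$ with $0<j\le 2^{d}-1$: if the binary digits of $j$ were contained in those of $2^{d}-1-j$ (Lucas), then, since $(2^{d}-1-j)+j=2^{d}-1$ is carry-free and hence those digit sets are disjoint, $j$ would have to be $0$. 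Via the Cartan formula and $Sq^{j}(x_l^{a})=\binom{a}{j}x_l^{a+j}$, this shows the $x_l$-degree-$(2^{d}-1)$ component of $Sq^{k}(x_l^{a}f)$ is nonzero only when $a=2^{d}-1$, where it equals $x_l^{2^{d}-1}Sq^{k}(f)$; note you must handle all Cartan terms $0<j\le 2^{i}$, not only the full operation $j=2^{i}$ that your sketch considers. The second problem is your reduction to $l=t$: permuting variables does \emph{not} preserve admissibility, because the exponent-vector tiebreak in the linear order on $\mathbb P^{\otimes t}$ is not symmetric --- for instance $x_1x_2^{2}$ is admissible while $x_1^{2}x_2=Sq^{1}(x_1x_2)+x_1x_2^{2}$ is inadmissible since $x_1x_2^{2}<x_1^{2}x_2$. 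Fortunately that reduction is also unnecessary, since the projection argument applies verbatim to the variable $x_l$ in any position.
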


\begin{thm}[see Sum \cite{N.S1}, Ph\'uc \cite{D.P6}]\label{dlP}
Consider generic degree of the form \eqref{ct} with $r = t-1$ and $d,\, m$ positive integers. Suppose there is a integer $\zeta$ such that $0\leq \zeta < d$ and $1\leq t-3\leq \mu(n_{\zeta}) = \alpha(n_{\zeta}+ \mu(n_{\zeta}))\leq t-2.$ Then, for each $d \geq \zeta,$ we have 
$$ \dim Q^{\otimes t}_{(t-1)(2^{d-\zeta + t-1}-1) + n_{\zeta}2^{d-\zeta + t-1}} =(2^{t}-1)\dim Q^{\otimes (t-1)}_{n_d}.$$
\end{thm}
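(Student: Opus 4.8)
Write $n_j=(t-1)(2^{j}-1)+m\,2^{j}$. Expanding $n_\zeta$ and using $2^{(d-\zeta+t-1)+\zeta}=2^{d+t-1}$ shows that the left-hand degree is
$$N:=(t-1)(2^{d-\zeta+t-1}-1)+n_\zeta\,2^{d-\zeta+t-1}=n_{d+t-1},$$
so the assertion becomes $\dim Q^{\otimes t}_{N}=(2^{t}-1)\dim Q^{\otimes(t-1)}_{n_d}$ with $N=n_{d+t-1}$. The plan is to reduce everything to a single rank-lowering identity by combining an arithmetic computation of $\mu$, Wood's and Kameko's theorems, and Sum's assembly maps $\psi_{(l,\mathscr L)}$.

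The arithmetic heart I would settle first is the claim that $\mu(n_j)=t-1$ for every $j>\zeta$ (hence $\mu(N)=t-1$). The lever is the recursion $n_{j}=(t-1)+2n_{j-1}$, equivalently $n_j+(t-1)=2^{\,j-\zeta}\bigl(n_\zeta+(t-1)\bigr)$, so that $\alpha(n_j+(t-1))=\alpha(n_\zeta+(t-1))$ is independent of $j$. Since $n_\zeta+\mu(n_\zeta)$ is a sum of $\mu(n_\zeta)$ distinct powers of $2$ (this is the content of $\mu(n_\zeta)=\alpha(n_\zeta+\mu(n_\zeta))$) and $t-1-\mu(n_\zeta)\in\{1,2\}$, one gets $\alpha(n_j+(t-1))\le t-1$, giving $\mu(n_j)\le t-1$; and for $r\le t-2$ the borrows produced by $n_j+r=2^{\,j-\zeta}(n_\zeta+(t-1))-(t-1-r)$ force $\alpha(n_j+r)>r$, giving $\mu(n_j)\ge t-1$. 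I expect this lemma to use the full equality $\mu(n_\zeta)=\alpha(n_\zeta+\mu(n_\zeta))$, as it is precisely what makes $\mu$ jump from $\mu(n_\zeta)\le t-2$ up to $t-1$ at the very first step $j=\zeta+1$.

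Granting $\mu(N)=t-1$, the structural steps are quick. By Wood's theorem \cite{R.W}, $Q^{\otimes s}_{N}=0$ for $s\le t-2$, so in the isomorphism $(Q^{\otimes t}_{N})^{0}\cong\bigoplus_{1\le s\le t-1}\binom{t}{s}(Q^{\otimes s}_{N})^{>0}$ of \cite{D.P4} only the top summand survives, whence $\dim(Q^{\otimes t}_{N})^{0}=t\,\dim(Q^{\otimes(t-1)}_{N})^{>0}$ and $(Q^{\otimes(t-1)}_{N})^{0}=0$, the latter giving $\dim(Q^{\otimes(t-1)}_{N})^{>0}=\dim Q^{\otimes(t-1)}_{N}$. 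Because $\mu(n_j)=t-1$ for $d<j\le d+t-1$, Kameko's criterion \cite{M.K} makes $\widetilde{Sq^0_*}$ an isomorphism at each rung of the chain $Q^{\otimes(t-1)}_{n_{d+t-1}}\to\cdots\to Q^{\otimes(t-1)}_{n_d}$, so $\dim Q^{\otimes(t-1)}_{N}=\dim Q^{\otimes(t-1)}_{n_d}$. Everything therefore collapses to the single identity $\dim(Q^{\otimes t}_{N})^{>0}=(2^{t}-1-t)\dim(Q^{\otimes(t-1)}_{N})^{>0}$, since adding the two pieces then yields $\dim Q^{\otimes t}_N=(2^t-1)\dim(Q^{\otimes(t-1)}_N)^{>0}=(2^t-1)\dim Q^{\otimes(t-1)}_{n_d}$.

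To prove that last identity I would run the maps $\psi_{(l,\mathscr L)}$. The pairs $(l,\mathscr L)$ with $1\le\ell(\mathscr L)\le t-1$ correspond to the subsets $\{l\}\cup\mathscr L$ of $\{1,\dots,t\}$ of size $\ge 2$, so there are exactly $\sum_{k=2}^{t}\binom{t}{k}=2^{t}-1-t$ of them, matching the predicted multiplicity. The goal is to show that $\widetilde{\Phi^{>0}}(\mathscr C^{\otimes(t-1)}_{N})$ is exactly the admissible basis of $(Q^{\otimes t}_{N})^{>0}$, the union being disjoint with each $\psi_{(l,\mathscr L)}$ injective and non-vanishing on $\mathscr C^{\otimes(t-1)}_{N}$. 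Here $\mu(N)=t-1$ is used once more: by Theorems \ref{dlPS} and \ref{dlSin} every admissible monomial of $\mathbb P^{\otimes(t-1)}_{N}$ has first weight coordinate $t-1$, i.e.\ all $t-1$ exponents odd, and it is this rigidity that one exploits to verify condition \eqref{dk} and hence the non-vanishing of each $\psi_{(l,\mathscr L)}$; admissibility of the images is Theorem \ref{dlMU} when $\mathscr L=\emptyset$ and Conjecture \ref{gtSum} otherwise, the latter being verified in these degrees exactly as in the proofs of Theorems \ref{dlc2}--\ref{dlc3}. The main obstacle is the exhaustion direction: proving that no admissible $>0$ monomial of $\mathbb P^{\otimes t}_{N}$ lies outside the images, equivalently that any $>0$ monomial not of the form $\psi_{(l,\mathscr L)}(X)$ is inadmissible. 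This is the one place needing real work rather than bookkeeping with $\alpha$, $\mu$, Wood and Kameko; I would attack it by propagating inadmissibility through squared factors via Theorem \ref{dlKS} together with an explicit reduction modulo $\overline{\mathscr A_2}\mathbb P^{\otimes t}_{N}$, this being where the weight of Sum's conjecture in this range is carried.
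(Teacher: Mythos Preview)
The paper does not give an independent proof of this theorem: it is cited from Sum \cite{N.S1} and Ph\'uc \cite{D.P6}, with only the remark that it is an equivalent reformulation of Sum's Theorem~1.3 via Kameko's theorem and its proof. Your reductions---the degree identity $N=n_{d+t-1}$, the computation $\mu(n_j)=t-1$ for $j>\zeta$, Wood's vanishing $Q^{\otimes s}_N=0$ for $s\le t-2$, and the Kameko chain $Q^{\otimes(t-1)}_N\cong Q^{\otimes(t-1)}_{n_d}$---are all correct and constitute exactly this reformulation. What remains, namely your identity $\dim(Q^{\otimes t}_N)^{>0}=(2^t-1-t)\dim Q^{\otimes(t-1)}_N$, is the substance of Sum's theorem, which the present paper simply invokes rather than re-proves.

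The genuine gap lies in your attempt to prove that last identity via the maps $\psi_{(l,\mathscr L)}$. You appeal to Conjecture~\ref{gtSum} for the admissibility of $\psi_{(l,\mathscr L)}(X)$ when $\mathscr L\neq\emptyset$, claiming it is ``verified in these degrees exactly as in the proofs of Theorems~\ref{dlc2}--\ref{dlc3}.'' But those theorems verify the conjecture only in degrees $31$ and $32$ for $t=5$, by explicit monomial-by-monomial enumeration; they say nothing about the general degrees $N=n_{d+t-1}$ for arbitrary $t,d,m$, so the appeal is a non sequitur. Moreover, even granting Conjecture~\ref{gtSum} in full you would obtain only the inclusion $\widetilde{\Phi^{>0}}(\mathscr C^{\otimes(t-1)}_N)\subseteq(\mathscr C^{\otimes t}_N)^{>0}$; the disjointness of the images of the various $\psi_{(l,\mathscr L)}$ and the exhaustion step you correctly flag as ``the one place needing real work'' are separate assertions not contained in the conjecture. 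Sum's proof in \cite{N.S1} handles this family of degrees directly---exploiting precisely the rigidity $\mu(N)=t-1$ you identify---without relying on the general conjecture, and that argument is what must be supplied (or cited) here.
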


Noting that this theorem is an equivalent statement of Theorem 1.3 in Sum \cite{N.S1} by using Kameko's theorem \cite{M.K} and the proof of Theorem 1.3.

In what follows, for a monomial $F\in \mathbb P^{\otimes t}_n,$ we denote by $[F]$ the equivalence class of $F$ in $Q_n^{\otimes t}.$ If $\omega$ is a weight vector of degree $n$ and $F\in (\mathbb P_n^{\otimes t})^{\leq \omega},$ then denote by $[F]_\omega$ the equivalence class of $F$ in $(Q_n^{\otimes t})^{\omega}.$  For a subset $\mathscr{C}\subset \mathbb P_n^{\otimes t},$ we will often write $|\mathscr C|$ for the cardinality of $\mathscr C$ and put $[\mathscr C] = \{[F]\, :\, F\in \mathscr C\}.$ If $\mathscr C\subset  (\mathbb P_n^{\otimes t})^{\leq \omega},$ then denote $[\mathscr C]_{\omega} = \{[F]_{\omega}\, :\, F\in \mathscr C\}.$ Let us consider the sets $(\mathscr{C}^{\otimes t}_{n})^{\omega^{0}} := (\mathscr{C}^{\otimes t}_{n})^{\omega}\cap  (\mathbb P_n^{\otimes t})^{0},$ and $(\mathscr{C}^{\otimes t}_{n})^{\omega^{>0}} := (\mathscr{C}^{\otimes t}_{n})^{\omega}\cap  (\mathbb P_n^{\otimes t})^{>0},$ where $(\mathscr{C}^{\otimes t}_{n})^{\omega} := \mathscr {C}^{\otimes t}_n\cap (\mathbb P_n^{\otimes t})^{\leq \omega}.$ Then, one has that the sets $[(\mathscr{C}^{\otimes t}_{n})^{\omega}]_\omega,\, [(\mathscr{C}^{\otimes t}_{n})^{\omega^{0}}]_\omega$ and $[(\mathscr{C}^{\otimes t}_{n})^{\omega^{>0}}]_\omega$ are respectively the bases of the $\mathbb Z_2$-vector spaces $(Q_n^{\otimes t})^{\omega},\ (Q_n^{\otimes t})^{\omega^{0}}$ and $(Q_n^{\otimes t})^{\omega^{>0}}.$ 

\subsection{Proof of Theorem \ref{dlc1}}

It is known that for $t = 5,$ the Kameko $\widetilde {Sq^0_*}$, which is an epimorphism, determined as follows:
$$ \begin{array}{ll}
\widetilde {Sq^0_*}: Q^{\otimes 5}_{31}  &\longrightarrow Q^{\otimes 5}_{13}\\
\ \ \ \mbox{[}\prod_{1\leq j\leq 5}x_j^{a_j}\mbox{]}&\longmapsto \left\{\begin{array}{ll}
\mbox{[}\prod_{1\leq j\leq 5}x_j^{\frac{a_j-1}{2}}\mbox{]}& \text{if $a_j$ odd, $j = 1, 2, \ldots, 5$},\\
0& \text{otherwise}.
\end{array}\right.
\end{array}$$
So, in order to prove theorem, we first show that if $X\in \mathscr {C}^{\otimes 5}_{31}$ such that $[X]\in {\rm Ker}(\widetilde {Sq_*^0}),$ then $\omega(X)$ is one of the sequences $\omega_{(j)},$ for $1\leq j\leq 7,$ where $\omega_{(4)}:= (1,1,1,3),$ $\omega_{(5)}:= (1,3,2,2),$ $\omega_{(6)}:= (1,3,4,1)$ and $\omega_{(7)}:=(3,2,4,1).$ Indeed,  we observe that $Z = x_1^{31}$ is the minimal spike in $\mathbb P_{31}^{\otimes 5}$ and $\omega(Z) = \omega_{(1)}.$ Since $X\in \mathscr C_{31}^{\otimes 5}$ and $\deg(X) = 31$ odd, by Theorem \ref{dlSin}, it may be concluded that $\omega_1(X)\in \{1, 3, 5\}.$ 

 If $\omega_1(X) = 5,$ then $X = \prod_{1\leq j\leq 5}x_jY^2$ with $Y$  a monomial of degree $13$ in $\mathbb P^{\otimes 5}.$ Since $X\in \mathscr {C}^{\otimes 5}_{31}$ and $\omega_i(\prod_{1\leq j\leq 5}x_j) = 0,$ for all $i > 1,$ according to Theorem \ref{dlKS}, $Y\in \mathscr {C}^{\otimes 5}_{13}.$  This implies that $\widetilde {Sq_*^0}([X]) = [Y]\neq 0,$ which contradicts the fact that $[X]\in \mbox{Ker}(\widetilde {Sq_*^0}).$ Hence, we get either $\omega_1(X) = 1\ \mbox{or}\ \omega_1(X) = 3.$ 

If $\omega_1(X) = 1,$ then following Theorem \ref{dlKS}, one has that $X = x_jZ^{2}$ with $Z\in \mathscr {C}^{\otimes 5}_{15}$ and $1\leq j\leq 5.$ By Sum \cite{N.S}, we get $\omega(X)\in \{\omega_{(1)}, \omega_{(4)}, \omega_{(5)}, \omega_{(6)}\}.$

If $\omega_1(X) = 3,$ then by Theorem \ref{dlKS}, $X = x_ax_bx_cT^{2}$ where $T\in \mathscr {C}^{\otimes 5}_{14}$ and $1\leq a<b<c\leq 5.$ Due to Ly-Tin \cite{L.T}, we deduce that $\omega(X)$ is one of the sequences $\omega_{(j)},\ j = 2, 3, 7.$

Next, we will show that if $\omega(X)\in \{\omega_{(i)}\}_{4\leq i\leq 7},$ then $(Q^{\otimes 5}_{31})^{\omega(X)} = 0.$ As shown in Sect.\ref{s1}, $\dim(Q^{\otimes 5}_{31})^{0} = 330,$ further by a simple computation, we obtain $$(\mathscr C_{31}^{\otimes 5})^{0} = \bigcup_{1\leq l\leq 5}\mathsf{q}_{(l,\,5)}(\mathscr C_{31}^{\otimes 4}) = (\mathscr C_{31}^{\otimes 5})^{\omega_{(1)}^{0}}\cup (\mathscr C_{31}^{\otimes 5})^{\omega_{(2)}^{0}},$$
where $|(\mathscr C_{31}^{\otimes 5})^{\omega_{(1)}^{0}}| = 30,$ and $|(\mathscr C_{31}^{\otimes 5})^{\omega_{(2)}^{0}}| = 300.$ So, $\dim (Q_{31}^{\otimes 5})^{\omega_{(1)}^{0}} = 30,$ and  $\dim (Q_{31}^{\otimes 5})^{\omega_{(2)}^{0}} = 300.$ Thus, we need only to prove that the spaces $(Q^{\otimes 5}_{31})^{\omega_{(i)}^{>0}}$ are trivial for $4\leq i\leq 7.$ 

\medskip 

\underline{{\it Case  $\omega(X) = \omega_{(4)}$}}. It is easy to see that the monomial $X = x_jZ^{2}\in (\mathbb P_{31}^{\otimes 5})^{>0}, 1\leq j\leq 5,$ is one of the following monomials: $x_1^{2}x_2^{4}x_3^{8}x_4^{8}x_5^{9},\ x_1^{2}x_2^{4}x_3^{8}x_4^{9}x_5^{8},\ x_1^{2}x_2^{4}x_3^{9}x_4^{8}x_5^{8},\  x_1^{2}x_2^{5}x_3^{8}x_4^{8}x_5^{8},\ x_1^{3}x_2^{4}x_3^{8}x_4^{8}x_5^{8}.$ They are inadmissible. Indeed, by a simple composition, we have
$$ X = x_jZ^{2} = x_1^{3}x_2^{4}x_3^{8}x_4^{8}x_5^{8} = Sq^{1}(x_1^{3}x_2^{3}x_3^{8}x_4^{8}x_5^{8}) + Sq^{2}(x_1^{2}x_2^{3}x_3^{8}x_4^{8}x_5^{8}) + x_1^{2}x_2^{5}x_3^{8}x_4^{8}x_5^{8} \mod (\mathbb P_{31}^{\otimes 5})^{< \omega_{(4)}}.$$ 
Since $x_1^{2}x_2^{5}x_3^{8}x_4^{8}x_5^{8} < x_1^{3}x_2^{4}x_3^{8}x_4^{8}x_5^{8},$ $X$ is inadmissible. 
\medskip

\underline{{\it Case  $\omega(X) = \omega_{(5)}$}}. Since $X = x_jZ^{2}$ with $Z\in \mathscr {C}^{\otimes 5}_{15},$ and $1\leq j\leq 5,$ $X$ is a permutation of one of the following monomials:

\begin{center}
\begin{tabular}{lrrr}
$x_ix_j^{2}x_k^{2}x_l^{12}x_m^{14}$, & \multicolumn{1}{l}{$x_ix_j^{2}x_k^{4}x_l^{10}x_m^{14}$,} & \multicolumn{1}{l}{$x_ix_j^{2}x_k^{6}x_l^{8}x_m^{14}$,} & \multicolumn{1}{l}{$x_ix_j^{2}x_k^{6}x_l^{10}x_m^{12}$,} \\
$x_ix_j^{4}x_k^{6}x_l^{10}x_m^{10}$, & \multicolumn{1}{l}{$x_ix_j^{6}x_k^{6}x_l^{8}x_m^{10}$,} & \multicolumn{1}{l}{$x_i^{3}x_j^{2}x_k^{2}x_l^{12}x_m^{12}$,} & \multicolumn{1}{l}{$x_i^{3}x_j^{2}x_k^{4}x_l^{8}x_m^{14}$,} \\
$x_i^{3}x_j^{2}x_k^{4}x_l^{10}x_m^{12}$, & \multicolumn{1}{l}{$x_i^{3}x_j^{2}x_k^{6}x_l^{8}x_m^{12}$,} & \multicolumn{1}{l}{$x_i^{3}x_j^{4}x_k^{4}x_l^{10}x_m^{10}$,} & \multicolumn{1}{l}{$x_i^{3}x_j^{4}x_k^{6}x_l^{8}x_m^{10}$,} \\
$x_i^{3}x_j^{6}x_k^{6}x_l^{8}x_m^{8}$, & \multicolumn{1}{l}{$x_i^{7}x_j^{2}x_k^{2}x_l^{8}x_m^{12}$,} & \multicolumn{1}{l}{$x_i^{7}x_j^{2}x_k^{4}x_l^{8}x_m^{10}$,} & \multicolumn{1}{l}{$x_i^{7}x_j^{2}x_k^{6}x_l^{8}x_m^{8}$,} \\
$x_i^{15}x_j^{2}x_k^{2}x_l^{4}x_m^{8}$, &       &       &  
\end{tabular}
\end{center}
where $(i, j, k, l, m)$  is a permutation of $(1, 2,3,4,5).$ A direct computation shows that 
$$ X = x_ix_j^{2}x_k^{2}x_l^{12}x_m^{14} = Sq^{1}(x_i^{2}x_jx_kx_l^{12}x_m^{14}) + Sq^{2}(x_ix_jx_kx_l^{12}x_m^{14} + x_ix_jx_kx_l^{10}x_m^{16}) \mod (\mathbb P_{31}^{\otimes 5})^{< \omega_{(5)}}.$$
This implies that $X$ is inadmissible. 

\medskip

\underline{{\it Case  $\omega(X) = \omega_{(6)}$}}. We have $X = x_jZ^{2}$ with $\omega(Z) = (3,4,1)$ and $1\leq j\leq 5.$ By a simple computation, we find that $X$ of the form $FG^{2^{s}},$ where $s$ is a suitable integer and $F$ is one of the following inadmissible monomials:

\begin{center}
\begin{tabular}{lllll}
$F_{1}=x_1^{2}x_4^{2}x_5^{3}$, & $F_{2}=x_1^{2}x_4^{3}x_5^{2}$, & $F_{3}=x_1^{2}x_3^{2}x_5^{3}$, & $F_{4}=x_1^{2}x_3^{2}x_4^{3}$, & $F_{5}=x_1^{2}x_3^{3}x_5^{2}$, \\
$F_{6}=x_1^{2}x_3^{3}x_4^{2}$, & $F_{7}=x_1^{2}x_2^{2}x_5^{3}$, & $F_{8}=x_1^{2}x_2^{2}x_4^{3}$, & $F_{9}=x_1^{2}x_2^{2}x_3^{3}$, & $F_{10}=x_1^{2}x_2^{3}x_5^{2}$, \\
$F_{11}=x_1^{2}x_2^{3}x_4^{2}$, & $F_{12}=x_1^{2}x_2^{3}x_3^{2}$, & $F_{13}=x_1^{3}x_4^{2}x_5^{2}$, & $F_{14}=x_1^{3}x_3^{2}x_5^{2}$, & $F_{15}=x_1^{3}x_3^{2}x_4^{2}$, \\
$F_{16}=x_1^{3}x_2^{2}x_5^{2}$, & $F_{17}=x_1^{3}x_2^{2}x_4^{2}$, & $F_{18}=x_1^{3}x_2^{2}x_3^{2}$, & $F_{19}=x_1^{2}x_3x_4^{2}x_5^{2}$, & $F_{20}=x_1^{2}x_3^{2}x_4x_5^{2}$, \\
$F_{21}=x_1^{2}x_3^{2}x_4^{2}x_5$, & $F_{22}=x_1^{2}x_2x_4^{2}x_5^{2}$, & $F_{23}=x_1^{2}x_2x_3^{2}x_5^{2}$, & $F_{24}=x_1^{2}x_2x_3^{2}x_4^{2}$, & $F_{25}=x_1^{2}x_2^{2}x_4x_5^{2}$, \\
$F_{26}=x_1^{2}x_2^{2}x_4^{2}x_5$, & $F_{27}=x_1^{2}x_2^{2}x_3x_5^{2}$, & $F_{28}=x_1^{2}x_2^{2}x_3x_4^{2}$, & $F_{29}=x_1^{2}x_2^{2}x_3^{2}x_5$, & $F_{30}=x_1^{2}x_2^{2}x_3^{2}x_4$.
\end{tabular}%
\end{center}

Then, according to Theorem \ref{dlKS}, $X$ is inadmissible.

\medskip

\underline{{\it Case $\omega(X) = \omega_{(7)}$}}. Note that $X = x_ax_bx_cT^{2}$ with $T\in \mathscr {C}^{\otimes 5}_{14},$ and $1\leq a<b<c\leq 5.$ Then, $X$ is a permutation of one of the following monomials:

\begin{center}
\begin{tabular}{lrrr}
$x_i^{3}x_j^{4}x_k^{4}x_l^{5}x_m^{15}$, & $x_i^{3}x_j^{4}x_k^{4}x_l^{7}x_m^{13}$, & $x_i^{3}x_j^{4}x_k^{5}x_l^{5}x_m^{14}$, & \multicolumn{1}{l}{$x_i^{3}x_j^{4}x_k^{5}x_l^{6}x_m^{13}$,} \\
$x_i^{3}x_j^{4}x_k^{5}x_l^{7}x_m^{12}$, & $x_i^{3}x_j^{5}x_k^{5}x_l^{6}x_m^{12}$, & $x_i^{7}x_j^{2}x_k^{4}x_l^{5}x_m^{13}$, & \multicolumn{1}{l}{$x_i^{7}x_j^{2}x_k^{5}x_l^{5}x_m^{12}$,} \\
$x_i^{7}x_j^{4}x_k^{4}x_l^{5}x_m^{11}$, & $x_i^{7}x_j^{4}x_k^{5}x_l^{5}x_m^{10}$, & $x_i^{15}x_j^{2}x_k^{4}x_l^{5}x_m^{5}$, &  \\
\end{tabular}%
\end{center}
where $(i, j, k, l, m)$  is a permutation of $(1, 2,3,4,5).$ Using the Cartan formula, one has
$$ \begin{array}{ll}
\medskip
X = x_i^{3}x_j^{4}x_k^{4}x_l^{5}x_m^{15}& = Sq^{1}(x_i^{3}x_jx_k^{2}x_l^{9}x_m^{15} + x_i^{3}x_jx_k^{2}x_l^{5}x_m^{19})\\
\medskip
&\quad + Sq^{2}(x_i^{5}x_j^{2}x_k^{2}x_l^{5}x_m^{15} + x_i^{5}x_jx_k^{2}x_l^{6}x_m^{15})\\
\medskip
&\quad + Sq^{4}(x_i^{3}x_j^{2}x_k^{2}x_l^{5}x_m^{15}) \mod (\mathbb P_{31}^{\otimes 5})^{< \omega_{(7)}},
\end{array}$$
and we therefore deduce that $X$ is inadmissible.

Thus the above computations show that $(Q^{\otimes 5}_{31})^{\omega_{(j)}^{>0}} = 0$ for $j \in \{4, 5, 6, 7\}.$ Now, from the above cases, we have a direct summand decomposition of the $\mathbb Z_2$-vector spaces:
$$\mbox{\rm Ker}(\widetilde {Sq_*^0})\cap (Q_{31}^{\otimes 5})^{>0} = (Q_{31}^{\otimes 5})^{\omega_{(1)}^{>0}}\bigoplus (Q_{31}^{\otimes 5})^{\omega_{(2)}^{>0}}\bigoplus (Q_{31}^{\otimes 5})^{\omega_{(3)}^{>0}},$$ from which we deduce that
$$ \begin{array}{ll}
\mbox{\rm Ker}(\widetilde {Sq_*^0}) &\cong (Q_{31}^{\otimes 5})^0 \bigoplus (\mbox{\rm Ker}(\widetilde {Sq_*^0})\cap (Q_{31}^{\otimes 5})^{>0})\\
&=  (Q_{31}^{\otimes 5})^0  \bigoplus \big(\bigoplus_{1\leq j\leq 3} (Q_{31}^{\otimes 5})^{\omega_{(j)}^{>0}}\big).
\end{array}$$ 
This finishes the proof of Theorem \ref{dlc1}.

\subsection{Proof of Theorem \ref{dlc2}}

In this section, we use the following homomorphisms: For any $(l, \mathscr L)\in\mathcal{N}_5,$ the $\mathbb Z_2$-linear transformation $\mathsf{p}_{(l, \mathscr L)}: \mathbb P_n^{\otimes 5}\to \mathbb P_n^{\otimes 4}$ defined by 
$$ \mathsf{p}_{(l, \mathscr L)}(x_j) = \left\{ \begin{array}{ll}
{x_j}&\text{if }\;1\leq j \leq l-1, \\
\sum_{p\in \mathscr L}x_{p-1}& \text{if}\; j = l,\\
x_{j-1}&\text{if}\; l+1 \leq j \leq 5.
\end{array} \right.$$
It is remarkable that  $\mathsf{p}_{(l, \emptyset)}(x_l) = 0$ for all $l,\ 1\leq l\leq 5.$ Furthermore, this linear map induces a homomorphism of $\mathscr A_2$-algebras which is also denoted by $\mathsf{p}_{(l, \mathscr L)}: \mathbb P^{\otimes 5}\to \mathbb P^{\otimes 4}.$ In particular, if $X$ is a monomial in $\mathbb P_n^{\otimes 5},$ then $\mathsf{p}_{(l, \mathscr L)}(X)\in (\mathbb P_n^{\otimes 4})^{\leq \omega(X)}$ (see \cite{P.S1}).

\newpage
Now, the proof of this theorem follows from the statements below.

\begin{propo}\label{md1}
We have $(Q^{\otimes 5}_{31})^{\omega_{(1)}^{>0}} = \langle [x_1x_2^{2}x_3^{4}x_4^{8}x_5^{16}] \rangle,$ and $[x_1x_2^{2}x_3^{4}x_4^{8}x_5^{16}]\neq 0.$
\end{propo}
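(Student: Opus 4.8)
\textbf{Proof plan for Proposition \ref{md1}.}
The weight vector $\omega_{(1)} = (1,1,1,1,1)$ has degree $\deg(\omega_{(1)}) = 1 + 2 + 4 + 8 + 16 = 31$, and any monomial $X \in \mathbb P^{\otimes 5}_{31}$ with $\omega(X) = \omega_{(1)}$ must have each exponent a power of $2$ (since $\omega_i(X) \leq 1$ forces each $u_j$ to be a sum of distinct powers of $2$ appearing in distinct "columns," and the total weight constraint pins down exactly one exponent at each bit position $2^0, \dots, 2^4$). First I would enumerate all monomials in $(\mathbb P^{\otimes 5}_{31})^{>0}$ with weight vector $\omega_{(1)}$: these are precisely the monomials $x_1^{2^{\sigma(0)}} x_2^{2^{\sigma(1)}} x_3^{2^{\sigma(2)}} x_4^{2^{\sigma(3)}} x_5^{2^{\sigma(4)}}$ where $\sigma$ ranges over the $120$ permutations of $\{0,1,2,3,4\}$ (the $>0$ condition is automatic since all exponents are positive). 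So $(\mathbb P^{\otimes 5}_{31})^{\omega_{(1)}^{>0}}$ is spanned by $120$ monomials, and I must show that modulo $\overline{\mathscr A_2}$-decomposables plus lower-weight terms, they all collapse to the single class $[x_1 x_2^2 x_3^4 x_4^8 x_5^{16}]$, which moreover is nonzero.

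The key tool is the standard "$Sq^1$-type" straightening relations together with Theorem \ref{dlKS}. For a product of the form $x_i^{2^a} x_j^{2^b}$ with $a < b$ appearing in a monomial, one has the Cartan-formula identity that lets one swap a higher power onto the earlier-indexed variable at the cost of $\omega$-lower terms; iterating such swaps is exactly a bubble-sort that brings any of the $120$ monomials into the canonical increasing-exponent arrangement $x_1 x_2^2 x_3^4 x_4^8 x_5^{16}$. Concretely, I would exhibit, for a transposition of adjacent bit-slots, a relation of the shape $x_r^{2^{b}} x_s^{2^{a}} \equiv x_r^{2^{a}} x_s^{2^{b}}$ modulo lower weight (for $a<b$), obtained by applying an appropriate $Sq^{2^{a}}$ to a monomial differing in one exponent; Theorem \ref{dlKS} guarantees that multiplying the remaining (fixed, power-of-two) factors back in preserves the congruence. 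Chaining these shows every one of the $120$ generators is $\equiv$ the canonical monomial, so $\dim (Q^{\otimes 5}_{31})^{\omega_{(1)}^{>0}} \leq 1$.

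For the lower bound — that $[x_1 x_2^2 x_3^4 x_4^8 x_5^{16}] \neq 0$ — I would use one of the homomorphisms $\mathsf p_{(l,\mathscr L)} \colon \mathbb P^{\otimes 5}_{31} \to \mathbb P^{\otimes 4}_{31}$ (or directly $\mathsf q_{(l,5)}$, or restriction/substitution), which are $\mathscr A_2$-maps and hence descend to $Q^{\otimes 5}_{31} \to Q^{\otimes 4}_{31}$; picking $l$ and $\mathscr L$ so that the image of $x_1 x_2^2 x_3^4 x_4^8 x_5^{16}$ is (up to a permutation of variables) the minimal spike $x_1 x_2^2 x_3^4 x_4^{8+16}$-type admissible monomial of degree $31$ in four variables, whose nontriviality in $Q^{\otimes 4}_{31}$ is guaranteed by Sum's results \cite{N.S1} (equivalently by Theorem \ref{dlPS}, since a spike is always admissible). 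Alternatively — and perhaps more cleanly — one invokes Theorem \ref{dlMU} with the iterated observation that $x_1 x_2^2 x_3^4 x_4^8 x_5^{16}$ is built from the admissible monomial of degree $15$ in four variables by the construction $x_l^{2^d - 1}\mathsf q_{(l,t)}(-)$, so it lies in $\mathscr C^{\otimes 5}_{31}$ and thus represents a nonzero class. Combining the two bounds gives $(Q^{\otimes 5}_{31})^{\omega_{(1)}^{>0}} = \langle [x_1 x_2^2 x_3^4 x_4^8 x_5^{16}]\rangle$, one-dimensional.

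The main obstacle I anticipate is bookkeeping in the collapse argument: verifying that each adjacent swap relation genuinely holds modulo $(\mathbb P^{\otimes 5}_{31})^{<\omega_{(1)}}$ and not just modulo arbitrary decomposables requires carefully choosing the $Sq^{2^a}$ preimage and checking that every term the Cartan formula produces other than the desired monomial either is decomposable of weight $\leq \omega_{(1)}$ or has strictly smaller weight; doing this uniformly for all bit-slot transpositions, rather than ad hoc for each of the $120$ monomials, is where the real care lies. The nontriviality direction is comparatively routine once the right $\mathscr A_2$-homomorphism (or the Mothebe--Uys criterion) is brought to bear.
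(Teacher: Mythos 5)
Your main line of argument is essentially the paper's own: the upper bound comes from Cartan-formula swap relations (the paper uses exactly the relation $x_i^{2}x_jx_k^{4}x_l^{8}x_m^{16} = Sq^{1}(x_ix_jx_k^{4}x_l^{8}x_m^{16}) + x_ix_j^{2}x_k^{4}x_l^{8}x_m^{16}$, after first shrinking the candidate list by writing $X=x_jZ^{2}$ with $Z\in(\mathscr C^{\otimes 5}_{15})^{>0}$ and invoking Theorem~\ref{dlKS}), and the lower bound from applying $\mathsf{p}_{(4,(5))}$, which sends $x_1x_2^{2}x_3^{4}x_4^{8}x_5^{16}$ to $x_1x_2^{2}x_3^{4}x_4^{24}$, admissible in $\mathbb P^{\otimes 4}_{31}$ by Sum's results. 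One caution: both of the alternatives you float for the nontriviality step fail. The monomial $x_1x_2^{2}x_3^{4}x_4^{24}$ is not a spike (its exponents $2,4,24$ are not of the form $2^{\lambda}-1$), so Theorem~\ref{dlPS} is not applicable to it; and Theorem~\ref{dlMU} cannot produce $x_1x_2^{2}x_3^{4}x_4^{8}x_5^{16}$, since the only exponent of the form $2^{d}-1$ is $1$, and the corresponding four-variable monomial $x_1^{2}x_2^{4}x_3^{8}x_4^{16}$ of degree $30$ has weight vector $(0,1,1,1,1)$, strictly below that of the minimal spike $x_1^{15}x_2^{15}$, hence is hit by Theorem~\ref{dlSin} and in particular not admissible. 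So the $\mathsf{p}_{(l,\mathscr L)}$ argument is not merely one option among several here; it is the one that works.
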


\begin{proof}
Let $X\in (\mathscr C^{\otimes 5}_{31})^{\omega_{(1)}^{>0}}.$ Since $X = x_jZ^{2}$ with $Z\in (\mathscr C^{\otimes 5}_{15})^{> 0},$ $1\leq j\leq 5,$ by a simple computation, we see that if $X\neq x_1x_2^{2}x_3^{4}x_4^{8}x_5^{16},$ then $X$ is a permutation of the following monomials:

\begin{center}
\begin{tabular}{lrrr}
$x_i^{2}x_jx_k^{4}x_l^{8}x_m^{16}$, & \multicolumn{1}{l}{$x_l^{2}x_m^{29}$,} & \multicolumn{1}{l}{$x_l^{3}x_m^{28}$,} & \multicolumn{1}{l}{$x_lx_m^{30}$,} \\
$x_kx_l^{2}x_m^{28}$, & \multicolumn{1}{l}{$x_k^{2}x_l^{4}x_m^{25}$,} & \multicolumn{1}{l}{$x_k^{2}x_l^{5}x_m^{24}$,} & \multicolumn{1}{l}{$x_k^{3}x_l^{4}x_m^{24}$,} \\
$x_jx_k^{2}x_l^{4}x_m^{24}$, & \multicolumn{1}{l}{$x_j^{2}x_k^{4}x_l^{8}x_m^{17}$,} & \multicolumn{1}{l}{$x_j^{2}x_k^{4}x_l^{9}x_m^{16}$,} & \multicolumn{1}{l}{$x_j^{2}x_k^{5}x_l^{8}x_m^{16}$,} \\
$x_j^{3}x_k^{4}x_l^{8}x_m^{16}$, &       &       &  
\end{tabular}%
\end{center}
where $i, j, k, l,  m$ are distinct integers and $1\leq i, j, k, l, m \leq 5.$ It is easy to see that $$X = x_i^{2}x_jx_k^{4}x_l^{8}x_m^{16} = Sq^{1}(x_ix_jx_k^{4}x_l^{8}x_m^{16}) + x_ix_j^{2}x_k^{4}x_l^{8}x_m^{16}$$ and $x_ix_j^{2}x_k^{4}x_l^{8}x_m^{16} < x_i^{2}x_jx_k^{4}x_l^{8}x_m^{16}.$ Hence, $X$ is inadmissible. Now suppose there exist a linear relation $\gamma x_1x_2^{2}x_3^{4}x_4^{8}x_5^{16}\equiv 0$ with $\gamma\in\mathbb Z_2.$ Then the homomorphism $\mathsf{p}_{(4, (5))}: \mathbb P_{31}^{\otimes 5}\to \mathbb P_{31}^{\otimes 4}$ sends this equality to $\mathsf{p}_{(4, (5))}(\gamma x_1x_2^{2}x_3^{4}x_4^{8}x_5^{16})\equiv \gamma x_1x_2^{2}x_3^{4}x_4^{24}\equiv 0$ and so  $\gamma  = 0$ since according to Sum \cite{N.S1}, $x_1x_2^{2}x_3^{4}x_4^{24}$ is an admissible monomial in $\mathbb P_{31}^{\otimes 4}.$  The proposition follows.
\end{proof}

The following (Lemmata \ref{bd1} and \ref{bd2}) are crucial observations in the proof of the theorem.
\begin{lema}\label{bd1}
The following monomials are strictly inadmissible:

\begin{center}
\begin{tabular}{llll}
$x_1x_2^{14}x_3^{3}x_4^{12}x_5$, & $x_1^{3}x_2^{12}x_3x_4x_5^{14}$, & $x_1^{3}x_2^{12}x_3x_4^{14}x_5$, & $x_1x_2^{6}x_3^{11}x_4^{12}x_5$, \\
$x_1^{3}x_2^{12}x_3x_4^{2}x_5^{13}$, & $x_1^{3}x_2^{7}x_3^{8}x_4^{12}x_5$, & $x_1^{3}x_2^{12}x_3x_4^{3}x_5^{12}$, & $x_1^{3}x_2^{12}x_3^{3}x_4x_5^{12}$, \\
$x_1^{3}x_2^{12}x_3^{3}x_4^{12}x_5$, & $x_1^{3}x_2^{15}x_3^{4}x_4x_5^{8}$, & $x_1^{3}x_2^{15}x_3^{4}x_4^{8}x_5$, & $x_1^{3}x_2^{4}x_3x_4^{15}x_5^{8}$, \\
$x_1^{3}x_2^{4}x_3^{15}x_4x_5^{8}$, & $x_1^{3}x_2^{4}x_3x_4^{8}x_5^{15}$, & $x_1^{3}x_2^{4}x_3^{15}x_4^{8}x_5$, & $x_1^{3}x_2^{4}x_3^{8}x_4x_5^{15}$, \\
$x_1^{3}x_2^{4}x_3^{8}x_4^{15}x_5$, & $x_1^{15}x_2^{3}x_3^{4}x_4x_5^{8}$, & $x_1^{15}x_2^{3}x_3^{4}x_4^{8}x_5$, & $x_1x_2^{14}x_3^{3}x_4^{4}x_5^{9}$, \\
$x_1^{3}x_2^{4}x_3x_4^{9}x_5^{14}$, & $x_1^{3}x_2^{4}x_3^{9}x_4x_5^{14}$, & $x_1^{3}x_2^{4}x_3x_4^{14}x_5^{9}$, & $x_1^{3}x_2^{4}x_3^{9}x_4^{14}x_5$, \\
$x_1^{3}x_2^{4}x_3^{11}x_4^{12}x_5$, & $x_1x_2^{14}x_3^{3}x_4^{5}x_5^{8}$, & $x_1^{3}x_2^{5}x_3x_4^{8}x_5^{14}$, & $x_1^{3}x_2^{5}x_3x_4^{14}x_5^{8}$, \\
$x_1^{3}x_2^{5}x_3^{14}x_4x_5^{8}$, & $x_1^{3}x_2^{5}x_3^{8}x_4x_5^{14}$, & $x_1^{3}x_2^{5}x_3^{14}x_4^{8}x_5$, & $x_1^{3}x_2^{5}x_3^{8}x_4^{14}x_5$, \\
$x_1x_2^{6}x_3^{3}x_4^{13}x_5^{8}$, & $x_1x_2^{6}x_3^{3}x_4^{12}x_5^{9}$, & $x_1^{3}x_2^{7}x_3^{12}x_4x_5^{8}$,  & $x_1^{3}x_2^{3}x_3^{12}x_4^{12}x_5$,\\
$x_1x_2^{7}x_3^{10}x_4^{12}x_5$, & $x_1^{7}x_2x_3^{10}x_4^{12}x_5$. & & 
\end{tabular}%
\end{center}
\end{lema}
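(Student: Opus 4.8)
The plan is to verify Definition~\ref{dnkcndc} directly for each of the $38$ monomials $X$ in the list, that is, to exhibit an explicit identity
$$ X \ =\ \sum_{j} Y_j \ +\ \sum_{1\leq l\leq 2^{s}-1}Sq^{l}(Z_l), \qquad Y_j < X,\ \ Z_l\in \mathbb P^{\otimes 5}_{31-l}, $$
with $s = \max\{i : \omega_i(X) > 0\}$. A one-line check of binary digits shows that every monomial in the list has weight vector $\omega(X)=\omega_{(2)}=(3,2,2,2)$, so $s=4$ uniformly and the only operations that can occur are $Sq^{1},Sq^{2},Sq^{4},Sq^{8}$; this puts the whole list in a single setting.

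The identities are produced by the Cartan formula, and I would organize the work around a short stock of elementary straightening templates. The prototype is the rank-two relation
$$ x_a^{3}x_b^{4}\ =\ Sq^{1}(x_a^{3}x_b^{3})\ +\ Sq^{2}(x_a^{2}x_b^{3})\ +\ x_a^{2}x_b^{5}, $$
which is immediate from the Cartan formula and in which $x_a^{2}x_b^{5}<x_a^{3}x_b^{4}$; a handful of companions of the same shape handle the other local patterns that occur ($x_a^{3}x_b^{5}$, $x_ax_b^{6}$, $x_ax_b^{14}$, $x_a^{3}x_b^{12}$, $x_a^{3}x_b^{7}$, and so on). The labour-saving device is a spectator principle, easily derived from the Cartan formula (which in particular yields $(Sq^{l}z)^{2^{k}}=Sq^{2^{k}l}(z^{2^{k}})$): if $\deg(W)<2^{k}$ and $V$ is strictly inadmissible, then $W\cdot V^{2^{k}}$ is strictly inadmissible, because raising the defining identity of $V$ to the $2^{k}$-th power and multiplying by $W$ leaves only error terms of the shape $Sq^{2^{k}a}(W)\cdot(Sq^{j}V)^{2^{k}}$, all of which vanish for $a\geq 1$ since $\deg(W)<2^{k}$, while the surviving squaring degrees stay $\leq 2^{s}-1$. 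Combining this with a permutation of the variables --- harmless by symmetry, exactly as the tuple $(i,j,k,l,m)$ is used in the proof of Theorem~\ref{dlc1} --- and with the known lists of strictly inadmissible monomials in $\mathbb P^{\otimes 3}$ and $\mathbb P^{\otimes 4}$ of Kameko~\cite{M.K} and Sum~\cite{N.S1}, one recovers the required identity for most of the monomials by reduction to a lower-rank template.

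It then remains, monomial by monomial, to write out the resulting identity and to check that every non-$Sq$ term $Y_j$ on the right satisfies $Y_j<X$. Since the templates never raise the weight vector, each $Y_j$ has $\omega(Y_j)\leq\omega_{(2)}$; the terms with $\omega(Y_j)<\omega_{(2)}$ are then automatically $<X$, and for the terms with $\omega(Y_j)=\omega_{(2)}$ one compares exponent vectors in the left lexicographic order, which is routine. One also checks in passing that no operation beyond $Sq^{8}$ is ever needed, so the range $l\leq 2^{s}-1=15$ is respected. This finite verification completes the proof.

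I expect the obstacle to be purely computational, with two sources. First, a few monomials carrying large blocks --- those involving $x_1^{15}$, $x_1^{3}x_2^{15}$, or $x_1^{3}x_2^{12}$ --- do not decompose as a clean spectator product $W\cdot V^{2^{k}}$ and will need an ad hoc chain of two or three Cartan-formula moves before all error terms become order-smaller. Second, verifying $Y_j<X$ for \emph{every} error term across all $38$ identities is a sizeable, though entirely mechanical, bookkeeping task --- which is precisely why the MAGMA routine in the Appendix is provided as an independent check. Conceptually nothing is delicate: once the polynomials $Z_l$ have been found, each identity is a formal consequence of the Cartan formula over $\mathbb Z_2$.
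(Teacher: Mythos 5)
Your proposal follows essentially the same route as the paper: for each listed monomial one exhibits, via the Cartan formula, an identity $X=\sum_jY_j+\sum_iSq^{2^i}(Z_i)$ modulo $(\mathbb P^{\otimes 5}_{31})^{<\omega_{(2)}}$ with every $Y_j<X$ — the paper simply writes out two representatives ($x_1x_2^{6}x_3^{3}x_4^{12}x_5^{9}$ and $x_1^{3}x_2^{7}x_3^{12}x_4x_5^{8}$) in full and declares the remaining thirty-six "similar." Your spectator principle is a correct strict-inadmissibility refinement of Theorem~\ref{dlKS} (the error terms $Sq^{a}(W)\,Sq^{2^{k}l-a}(Z^{2^{k}})$ really do vanish, since $Sq^{b}$ annihilates $2^{k}$-th powers unless $2^{k}\mid b$, which forces $a\geq 2^{k}>\deg W$), and together with your per-monomial check that each residual term is order-smaller it is a legitimate organizational device, but the substance of both arguments is the same finite Cartan-formula verification.
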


\begin{proof}
Let us consider the monomials $x_1x_2^{6}x_3^{3}x_4^{12}x_5^{9},$ and $x_1^{3}x_2^{7}x_3^{12}x_4x_5^{8}.$ Because the statements are similar,  we will only prove the lemma for these monomials. By a direct computation using the Cartan formula, we get the equality below.
$$ \begin{array}{ll}
x_1x_2^{6}x_3^{3}x_4^{12}x_5^{9}&= x_1x_2^{3}x_3^{5}x_4^{8}x_5^{14}+
x_1x_2^{3}x_3^{6}x_4^{8}x_5^{13}+
x_1x_2^{3}x_3^{6}x_4^{12}x_5^{9}+
\medskip
x_1x_2^{3}x_3^{8}x_4^{5}x_5^{14}\\
&\quad +
x_1x_2^{3}x_3^{8}x_4^{6}x_5^{13}+
x_1x_2^{3}x_3^{8}x_4^{12}x_5^{7}+
x_1x_2^{4}x_3^{3}x_4^{9}x_5^{14}+
\medskip
x_1x_2^{4}x_3^{3}x_4^{10}x_5^{13}\\
&\quad +
x_1x_2^{4}x_3^{3}x_4^{12}x_5^{11}+
x_1x_2^{4}x_3^{5}x_4^{10}x_5^{11}+
x_1x_2^{4}x_3^{9}x_4^{10}x_5^{7}+
\medskip
x_1x_2^{4}x_3^{10}x_4^{5}x_5^{11}\\
&\quad +
x_1x_2^{4}x_3^{10}x_4^{9}x_5^{7}+
x_1x_2^{5}x_3^{5}x_4^{10}x_5^{10}+
x_1x_2^{5}x_3^{6}x_4^{9}x_5^{10}+
\medskip
x_1x_2^{5}x_3^{8}x_4^{10}x_5^{7}\\
&\quad +
x_1x_2^{6}x_3^{2}x_4^{9}x_5^{13}+
x_1x_2^{6}x_3^{3}x_4^{8}x_5^{13}+
\medskip
x_1x_2^{6}x_3^{3}x_4^{9}x_5^{12}\\
&\quad + Sq^1\big(x_1x_2^{3}x_3^{5}x_4^{8}x_5^{13}+
x_1x_2^{3}x_3^{8}x_4^{5}x_5^{13}+
x_1x_2^{5}x_3^{5}x_4^{5}x_5^{14}+
\medskip
x_1x_2^{5}x_3^{5}x_4^{6}x_5^{13}\\
&\quad +
x_1x_2^{5}x_3^{5}x_4^{10}x_5^{9}+
x_1x_2^{5}x_3^{5}x_4^{12}x_5^{7}+
x_1x_2^{5}x_3^{6}x_4^{5}x_5^{13}+
\medskip
x_1x_2^{5}x_3^{6}x_4^{9}x_5^{9}\\
&\quad +
x_1x_2^{5}x_3^{8}x_4^{9}x_5^{7}+
x_1x_2^{6}x_3^{5}x_4^{9}x_5^{9}+
x_1x_2^{8}x_3^{5}x_4^{9}x_5^{7}+
\medskip
x_1^{4}x_2^{3}x_3^{5}x_4^{5}x_5^{13}\\
&\quad +
\medskip
x_1^{4}x_2^{5}x_3^{5}x_4^{9}x_5^{7}\big)\\
\end{array}$$

\newpage
$$ \begin{array}{ll}
&\quad + Sq^2\big(x_1x_2^{3}x_3^{6}x_4^{12}x_5^{7}+
x_1x_2^{6}x_3^{2}x_4^{9}x_5^{11}+
x_1x_2^{6}x_3^{3}x_4^{5}x_5^{14}+
\medskip
x_1x_2^{6}x_3^{3}x_4^{6}x_5^{13}\\
&\quad +
x_1x_2^{6}x_3^{3}x_4^{9}x_5^{10}+
x_1x_2^{6}x_3^{3}x_4^{12}x_5^{7}+
x_1x_2^{6}x_3^{6}x_4^{5}x_5^{11}+
\medskip
x_1^{2}x_2^{3}x_3^{5}x_4^{5}x_5^{14}\\
&\quad +
x_1^{2}x_2^{3}x_3^{5}x_4^{6}x_5^{13}+
x_1^{2}x_2^{3}x_3^{6}x_4^{5}x_5^{13}+
x_1^{2}x_2^{5}x_3^{5}x_4^{10}x_5^{7}+
\medskip
x_1^{2}x_2^{5}x_3^{6}x_4^{9}x_5^{7}\\
&\quad +
\medskip
x_1^{2}x_2^{6}x_3^{5}x_4^{9}x_5^{7}
\big)\\
&\quad + Sq^4\big(x_1x_2^{4}x_3^{3}x_4^{5}x_5^{14}+
x_1x_2^{4}x_3^{3}x_4^{6}x_5^{13}+
x_1x_2^{4}x_3^{3}x_4^{12}x_5^{7}+
x_1x_2^{4}x_3^{5}x_4^{10}x_5^{7}\\
&\quad +
x_1x_2^{4}x_3^{6}x_4^{5}x_5^{11}+
x_1x_2^{4}x_3^{6}x_4^{9}x_5^{7}+
x_1x_2^{10}x_3^{4}x_4^{5}x_5^{7}
\big) + Sq^{8}(x_1x_2^{6}x_3^{4}x_4^{5}x_5^{7}) \mod (\mathbb P_{31}^{\otimes 5})^{< \omega_{(2)}}.
\end{array}$$
This shows that $x_1x_2^{6}x_3^{3}x_4^{12}x_5^{9}$ is strictly inadmissible. Next, the following computation implies that $x_1^{3}x_2^{7}x_3^{12}x_4x_5^{8}$ is also strictly inadmissible:
$$ \begin{array}{ll}
x_1^{3}x_2^{7}x_3^{12}x_4x_5^{8} &= x_1^{2}x_2^{11}x_3^{5}x_4x_5^{12}+
x_1^{2}x_2^{11}x_3^{5}x_4^{4}x_5^{9}+
x_1^{2}x_2^{11}x_3^{5}x_4^{8}x_5^{5}+
\medskip
x_1^{2}x_2^{13}x_3^{3}x_4^{4}x_5^{9}\\
&\quad +
x_1^{2}x_2^{13}x_3^{3}x_4^{8}x_5^{5}+
x_1^{2}x_2^{13}x_3^{5}x_4x_5^{10}+
x_1^{2}x_2^{13}x_3^{9}x_4x_5^{6}+
\medskip
x_1^{3}x_2^{3}x_3^{5}x_4^{8}x_5^{12}\\
&\quad +
x_1^{3}x_2^{3}x_3^{9}x_4^{4}x_5^{12}+
x_1^{3}x_2^{3}x_3^{12}x_4x_5^{12}+
x_1^{3}x_2^{5}x_3^{5}x_4^{8}x_5^{10}+
\medskip
x_1^{3}x_2^{5}x_3^{6}x_4^{8}x_5^{9}\\
&\quad +
x_1^{3}x_2^{5}x_3^{9}x_4^{4}x_5^{10}+
x_1^{3}x_2^{5}x_3^{9}x_4^{8}x_5^{6}+
x_1^{3}x_2^{5}x_3^{10}x_4^{4}x_5^{9}+
\medskip
x_1^{3}x_2^{5}x_3^{10}x_4^{8}x_5^{5}\\
&\quad +
x_1^{3}x_2^{7}x_3^{5}x_4^{8}x_5^{8}+
x_1^{3}x_2^{7}x_3^{8}x_4x_5^{12}+
x_1^{3}x_2^{7}x_3^{8}x_4^{4}x_5^{9}+
\medskip
x_1^{3}x_2^{7}x_3^{8}x_4^{8}x_5^{5}\\
&\quad +
\medskip
x_1^{3}x_2^{7}x_3^{9}x_4^{4}x_5^{8} + Sq^{1}\big(x_1^{5}x_2^{3}x_3^{5}x_4^{8}x_5^{9}+
x_1^{5}x_2^{3}x_3^{9}x_4x_5^{12}+
x_1^{5}x_2^{7}x_3^{5}x_4x_5^{12}\\
&\quad +x_1^{5}x_2^{7}x_3^{5}x_4^{4}x_5^{9}+
x_1^{5}x_2^{7}x_3^{5}x_4^{8}x_5^{5}+
\medskip
x_1^{5}x_2^{7}x_3^{9}x_4x_5^{8}\big)\\
&\quad + Sq^{2}\big(x_1^{2}x_2^{11}x_3^{3}x_4^{4}x_5^{9}+
x_1^{2}x_2^{11}x_3^{3}x_4^{8}x_5^{5}+
x_1^{2}x_2^{11}x_3^{5}x_4x_5^{10}+
\medskip
x_1^{2}x_2^{11}x_3^{9}x_4x_5^{6}\\
&\quad +
x_1^{3}x_2^{3}x_3^{5}x_4^{8}x_5^{10}+
x_1^{3}x_2^{3}x_3^{6}x_4^{8}x_5^{9}+
x_1^{3}x_2^{3}x_3^{9}x_4^{2}x_5^{12}+
\medskip
x_1^{3}x_2^{3}x_3^{10}x_4x_5^{12}\\
&\quad +
x_1^{3}x_2^{7}x_3^{5}x_4^{2}x_5^{12}+
x_1^{3}x_2^{7}x_3^{5}x_4^{4}x_5^{10}+
x_1^{3}x_2^{7}x_3^{5}x_4^{8}x_5^{6}+
\medskip
x_1^{3}x_2^{7}x_3^{6}x_4x_5^{12}\\
&\quad +
x_1^{3}x_2^{7}x_3^{6}x_4^{4}x_5^{9}+
x_1^{3}x_2^{7}x_3^{6}x_4^{8}x_5^{5}+
x_1^{3}x_2^{7}x_3^{9}x_4^{2}x_5^{8}+
\medskip
x_1^{3}x_2^{7}x_3^{10}x_4x_5^{8}\\
&\quad +
x_1^{6}x_2^{3}x_3^{3}x_4^{8}x_5^{9}+
x_1^{6}x_2^{3}x_3^{9}x_4x_5^{10}+
x_1^{6}x_2^{7}x_3^{3}x_4^{4}x_5^{9}+
\medskip
x_1^{6}x_2^{7}x_3^{3}x_4^{8}x_5^{5}\\
&\quad +
x_1^{6}x_2^{7}x_3^{5}x_4x_5^{10}+
\medskip
x_1^{6}x_2^{7}x_3^{9}x_4x_5^{6}\big)\\
&\quad + Sq^{4}\big(x_1^{3}x_2^{5}x_3^{5}x_4^{2}x_5^{12}+
x_1^{3}x_2^{5}x_3^{5}x_4^{4}x_5^{10}+
x_1^{3}x_2^{5}x_3^{5}x_4^{8}x_5^{6}+
\medskip
x_1^{3}x_2^{5}x_3^{6}x_4x_5^{12}\\
&\quad +
x_1^{3}x_2^{5}x_3^{6}x_4^{4}x_5^{9}+
x_1^{3}x_2^{5}x_3^{6}x_4^{8}x_5^{5}+
x_1^{4}x_2^{7}x_3^{3}x_4^{4}x_5^{9}+
\medskip
x_1^{4}x_2^{7}x_3^{3}x_4^{8}x_5^{5}\\
&\quad +
x_1^{4}x_2^{7}x_3^{5}x_4x_5^{10}+
x_1^{4}x_2^{7}x_3^{9}x_4x_5^{6}+
x_1^{10}x_2^{5}x_3^{3}x_4^{4}x_5^{5}+
\medskip
x_1^{10}x_2^{5}x_3^{5}x_4x_5^{6}
\big)\\
&\quad+ Sq^{8}\big(x_1^{6}x_2^{5}x_3^{3}x_4^{4}x_5^{5}+
x_1^{6}x_2^{5}x_3^{5}x_4x_5^{6}\big) \mod (\mathbb P_{31}^{\otimes 5})^{< \omega_{(2)}}.
\end{array}$$
The lemma follows.
\end{proof}

\begin{lema}\label{bd2}
The following monomials are strictly inadmissible:

\begin{center}
\begin{tabular}{llll}
$x_1^{3}x_2^{7}x_3^{12}x_4^{8}x_5$, & $x_1^{7}x_2^{3}x_3^{12}x_4x_5^{8}$, & $x_1^{7}x_2^{3}x_3^{8}x_4^{12}x_5$, & $x_1^{7}x_2^{3}x_3^{12}x_4^{8}x_5$, \\
$x_1x_2^{6}x_3^{11}x_4^{4}x_5^{9}$, & $x_1^{7}x_2^{11}x_3^{4}x_4x_5^{8}$, & $x_1^{7}x_2^{11}x_3^{4}x_4^{8}x_5$, & $x_1x_2^{6}x_3^{11}x_4^{5}x_5^{8}$, \\
$x_1^{7}x_2^{7}x_3^{8}x_4^{8}x_5$, & $x_1^{3}x_2^{4}x_3^{9}x_4^{2}x_5^{13}$, & $x_1^{3}x_2^{5}x_3^{8}x_4^{2}x_5^{13}$, & $x_1^{3}x_2^{5}x_3^{9}x_4^{2}x_5^{12}$, \\
$x_1^{3}x_2^{4}x_3^{3}x_4^{13}x_5^{8}$, & $x_1^{3}x_2^{4}x_3^{3}x_4^{8}x_5^{13}$, & $x_1^{3}x_2^{4}x_3^{8}x_4^{3}x_5^{13}$, & $x_1^{3}x_2^{4}x_3^{9}x_4^{3}x_5^{12}$, \\
$x_1^{3}x_2^{4}x_3^{3}x_4^{12}x_5^{9}$, & $x_1^{3}x_2^{12}x_3^{3}x_4^{4}x_5^{9}$, & $x_1^{3}x_2^{5}x_3^{8}x_4^{3}x_5^{12}$, & $x_1^{3}x_2^{12}x_3^{3}x_4^{5}x_5^{8}$, \\
$x_1^{3}x_2^{4}x_3^{11}x_4^{4}x_5^{9}$, & $x_1^{3}x_2^{4}x_3^{11}x_4^{5}x_5^{8}$, & $x_1^{3}x_2^{4}x_3^{9}x_4^{6}x_5^{9}$, & $x_1^{3}x_2^{7}x_3^{8}x_4^{4}x_5^{9}$, \\
$x_1^{3}x_2^{4}x_3^{7}x_4^{9}x_5^{8}$, & $x_1^{3}x_2^{4}x_3^{9}x_4^{7}x_5^{8}$, & $x_1^{3}x_2^{4}x_3^{7}x_4^{8}x_5^{9}$, & $x_1^{3}x_2^{4}x_3^{8}x_4^{7}x_5^{9}$, \\
$x_1^{7}x_2^{3}x_3^{8}x_4^{4}x_5^{9}$, & $x_1^{7}x_2^{8}x_3^{3}x_4^{4}x_5^{9}$, & $x_1^{3}x_2^{5}x_3^{9}x_4^{6}x_5^{8}$, & $x_1^{3}x_2^{5}x_3^{8}x_4^{6}x_5^{9}$, \\
$x_1^{3}x_2^{7}x_3^{8}x_4^{5}x_5^{8}$, & $x_1^{3}x_2^{5}x_3^{8}x_4^{7}x_5^{8}$, & $x_1^{7}x_2^{3}x_3^{8}x_4^{5}x_5^{8}$, & $x_1^{7}x_2^{8}x_3^{3}x_4^{5}x_5^{8}$.
\end{tabular}%
\end{center}
\end{lema}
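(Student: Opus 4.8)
The plan is to follow the template already used for Lemma~\ref{bd1}. First I would observe that each of the $36$ monomials $X$ in the list has weight vector $\omega(X) = \omega_{(2)} = (3,2,2,2)$, so that $s := \max\{i : \omega_i(X) > 0\} = 4$; by Definition~\ref{dnkcndc} it therefore suffices, for every such $X$, to exhibit monomials $Y_1, \ldots, Y_k$ with $Y_j < X$ for all $j$ and polynomials $Z_1, \ldots, Z_{15}$ with $X = \sum_{j} Y_j + \sum_{1 \le l \le 15} Sq^{l}(Z_l)$. As in Lemma~\ref{bd1}, in practice only $Sq^{1}, Sq^{2}, Sq^{4}$ and $Sq^{8}$ occur, and the entire identity is read modulo $(\mathbb P^{\otimes 5}_{31})^{< \omega_{(2)}}$, so any term of weight strictly below $\omega_{(2)}$ that is generated along the way may simply be dropped.

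Before producing identities I would trim the list. Two reductions cut down the work considerably. First, the collection is closed under permuting $x_1, \ldots, x_5$ among positions that carry identical exponents; since such a permutation preserves the linear order inside a fixed weight class, it is enough to treat one representative of each orbit and transport its decomposition. Second, several entries factor as $X = X' \cdot W^{2^{c}}$ with $\omega_i(X') = 0$ for some $i > c$ and with $X'$ (or a small factor of it) already known to be strictly inadmissible --- either from Lemma~\ref{bd1} or from the four-variable results of Sum --- and then the Cartan formula promotes the decomposition of $X'$ to one of $X$, in the spirit of Theorem~\ref{dlKS}. After these reductions only a few genuinely new monomials remain, comparable to $x_1x_2^{6}x_3^{3}x_4^{12}x_5^{9}$ and $x_1^{3}x_2^{7}x_3^{12}x_4x_5^{8}$ handled in Lemma~\ref{bd1}.

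For each of the remaining representatives I would construct the identity by the usual Kameko--Sum procedure: starting from expansions $Sq^{2^{i}}(x^{a}) = \sum_{0 \le k \le a}\binom{a}{k}x^{a+k}$ and the Cartan formula, apply $Sq^{1}$, $Sq^{2}$, $Sq^{4}$, $Sq^{8}$ successively to monomials whose leading term (in the order) lies below $X$, and collect cancellations; equivalently one uses the conjugation trick on the factors of the form $x^{2^{r}-1}$. The explicit polynomials $Z_l$ and the verification of the resulting equalities are produced by the MAGMA routine described in the Appendix, and the output has precisely the shape of the two displayed identities in the proof of Lemma~\ref{bd1}.

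The hard part is not conceptual but combinatorial. For each term on the right-hand side of every expansion one must confirm that it is $< X$ --- which entails first comparing weight vectors and, in the (typical) case of equality with $\omega_{(2)}$, comparing exponent vectors in the left-lexicographic order --- and one must check that every higher-square by-product that is not already $< X$ lands in $(\mathbb P^{\otimes 5}_{31})^{< \omega_{(2)}}$ and is thus absorbed. Carrying this bookkeeping out correctly for all $36$ monomials simultaneously is the main obstacle, and it is precisely what the computer verification is designed to handle.
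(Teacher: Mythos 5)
Your overall engine is the same as the paper's: for each listed monomial $X$ (all of weight $\omega_{(2)}=(3,2,2,2)$, so $s=4$ and squares $Sq^{l}$ with $l\leq 15$ are allowed) one exhibits an explicit Cartan--formula identity $X=\sum_j Y_j+\sum_l Sq^{l}(Z_l)$ read modulo $(\mathbb P^{\otimes 5}_{31})^{<\omega_{(2)}}$, with every surviving $Y_j$ strictly smaller than $X$; the paper carries this out in full for $x_1^{3}x_2^{4}x_3^{3}x_4^{8}x_5^{13}$ and $x_1^{7}x_2^{7}x_3^{8}x_4^{8}x_5$ and declares the rest similar. However, your first reduction step is not valid as stated. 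The order on $\mathbb P^{\otimes 5}_n$ compares weight vectors first and then exponent vectors left--lexicographically, and the exponent--vector comparison is \emph{not} preserved by permuting the variables: if $Y_j<X$ with $\omega(Y_j)=\omega(X)$, a transposition $\sigma$ can reverse the inequality (e.g.\ $u=(2,11,5,1,12)<(3,7,12,1,8)$ but after swapping the first two coordinates $(11,2,5,1,12)>(7,3,12,1,8)$). So a decomposition witnessing strict inadmissibility of $X$ cannot simply be ``transported'' to $\sigma(X)$; the equal--weight terms must be re-examined (or the identity re-derived) for each permuted copy. Since your plan leans on this orbit reduction to ``cut down the work considerably,'' this is a genuine gap: without it you are back to treating the $36$ monomials essentially one at a time, which is what the paper in fact does.

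A secondary caution: your second reduction invokes Theorem~\ref{dlKS}, but that theorem (as stated in the paper) concludes only that $XY^{2^{s}}$ is \emph{inadmissible}, not \emph{strictly inadmissible}, so for any entry handled that way you would be proving less than the lemma asserts; and for the monomials actually in this list (e.g.\ $x_1^{7}x_2^{7}x_3^{8}x_4^{8}x_5=x_1^{7}x_2^{7}x_5\cdot(x_3x_4)^{8}$, where the cofactor $x_3x_4$ is admissible) the hypothesis of that theorem is typically not available, which is presumably why the paper computes them directly.
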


\begin{proof}
We prove the lemma for the monomials $x_1^{3}x_2^{4}x_3^{3}x_4^{8}x_5^{13},$ and $x_1^{7}x_2^{7}x_3^{8}x_4^{8}x_5.$ Computing these monomials is long and technical. The others can be proved by a similar technique. A direct computation shows that
$$ \begin{array}{ll}
x_1^{3}x_2^{4}x_3^{3}x_4^{8}x_5^{13} &=Sq^{1}\big(x_1^{3}x_2x_3^{3}x_4^{6}x_5^{17}+
x_1^{3}x_2x_3^{3}x_4^{9}x_5^{14}+
x_1^{3}x_2x_3^{3}x_4^{10}x_5^{13}+
\medskip
x_1^{3}x_2x_3^{6}x_4^{9}x_5^{11}\\
&\quad +
x_1^{3}x_2x_3^{10}x_4^{5}x_5^{11}+
x_1^{3}x_2^{3}x_3x_4^{6}x_5^{17}+
x_1^{3}x_2^{3}x_3x_4^{10}x_5^{13}+
\medskip
x_1^{3}x_2^{3}x_3x_4^{12}x_5^{11}\\
&\quad +
x_1^{3}x_2^{3}x_3x_4^{16}x_5^{7}+
x_1^{3}x_2^{3}x_3^{4}x_4^{3}x_5^{17}+
x_1^{3}x_2^{3}x_3^{8}x_4^{3}x_5^{13}+
\medskip
x_1^{3}x_2^{3}x_3^{8}x_4^{5}x_5^{11}\\
&\quad +
x_1^{3}x_2^{3}x_3^{8}x_4^{9}x_5^{7}+
x_1^{3}x_2^{4}x_3^{5}x_4^{5}x_5^{13}+
\medskip
x_1^{5}x_2^{5}x_3x_4^{5}x_5^{14}\big)\\
&\quad + Sq^{2}\big(x_1^{2}x_2x_3^{3}x_4^{9}x_5^{14}+
x_1^{2}x_2x_3^{3}x_4^{10}x_5^{13}+
x_1^{2}x_2x_3^{6}x_4^{9}x_5^{11}+
\medskip
x_1^{2}x_2x_3^{10}x_4^{5}x_5^{11}\\
&\quad +
x_1^{2}x_2^{3}x_3x_4^{9}x_5^{14}+
x_1^{2}x_2^{3}x_3x_4^{10}x_5^{13}+
x_1^{2}x_2^{3}x_3x_4^{12}x_5^{11}+
\medskip
x_1^{2}x_2^{3}x_3^{8}x_4^{3}x_5^{13}\\
&\quad +
x_1^{2}x_2^{3}x_3^{8}x_4^{5}x_5^{11}+
x_1^{2}x_2^{3}x_3^{8}x_4^{9}x_5^{7}+
x_1^{3}x_2^{5}x_3x_4^{6}x_5^{14}+
\medskip
x_1^{3}x_2^{5}x_3^{2}x_4^{5}x_5^{14}\\
&\quad +
x_1^{3}x_2^{6}x_3x_4^{5}x_5^{14}+
x_1^{5}x_2x_3^{6}x_4^{6}x_5^{11}+
x_1^{5}x_2^{2}x_3^{2}x_4^{9}x_5^{11}+
\medskip
x_1^{5}x_2^{2}x_3^{3}x_4^{5}x_5^{14}\\
&\quad +
x_1^{5}x_2^{2}x_3^{3}x_4^{6}x_5^{13}+
x_1^{5}x_2^{2}x_3^{6}x_4^{5}x_5^{11}+
x_1^{5}x_2^{2}x_3^{8}x_4^{3}x_5^{11}+
\medskip
x_1^{5}x_2^{3}x_3x_4^{6}x_5^{14}\\
&\quad +
x_1^{5}x_2^{3}x_3^{2}x_4^{6}x_5^{13}+
x_1^{5}x_2^{3}x_3^{2}x_4^{8}x_5^{11}+
x_1^{5}x_2^{3}x_3^{2}x_4^{10}x_5^{9}+
\medskip
x_1^{5}x_2^{3}x_3^{2}x_4^{12}x_5^{7}\\
&\quad +
x_1^{5}x_2^{3}x_3^{4}x_4^{3}x_5^{14}+
x_1^{5}x_2^{3}x_3^{4}x_4^{6}x_5^{11}+
x_1^{5}x_2^{3}x_3^{4}x_4^{10}x_5^{7}+
\medskip
x_1^{6}x_2^{3}x_3x_4^{5}x_5^{14}
\big)\\
&\quad + Sq^{4}\big(x_1^{3}x_2x_3^{6}x_4^{5}x_5^{12}+
x_1^{3}x_2x_3^{6}x_4^{6}x_5^{11}+
x_1^{3}x_2^{2}x_3^{2}x_4^{9}x_5^{11}+
\medskip
x_1^{3}x_2^{2}x_3^{3}x_4^{5}x_5^{14}\\
&\quad +
x_1^{3}x_2^{2}x_3^{3}x_4^{6}x_5^{13}+
x_1^{3}x_2^{2}x_3^{6}x_4^{5}x_5^{11}+
x_1^{3}x_2^{2}x_3^{8}x_4^{3}x_5^{11}+
\medskip
x_1^{3}x_2^{3}x_3x_4^{6}x_5^{14}\\
&\quad +
x_1^{3}x_2^{3}x_3^{2}x_4^{6}x_5^{13}+
x_1^{3}x_2^{3}x_3^{2}x_4^{8}x_5^{11}+
x_1^{3}x_2^{3}x_3^{2}x_4^{10}x_5^{9}+
\medskip
x_1^{3}x_2^{3}x_3^{2}x_4^{12}x_5^{7}\\
&\quad +
x_1^{3}x_2^{3}x_3^{4}x_4^{3}x_5^{14}+
x_1^{3}x_2^{3}x_3^{4}x_4^{6}x_5^{11}+
x_1^{3}x_2^{3}x_3^{4}x_4^{10}x_5^{7}+
\medskip
x_1^{3}x_2^{4}x_3x_4^{5}x_5^{14}\\
&\quad +
x_1^{3}x_2^{8}x_3^{4}x_4^{5}x_5^{7}+
\medskip
x_1^{4}x_2^{3}x_3x_4^{5}x_5^{14}\big) +Sq^{8}\big(x_1^{3}x_2^{4}x_3^{4}x_4^{5}x_5^{7}\big)\\
&\quad +x_1^{2}x_2x_3^{3}x_4^{12}x_5^{13}+
x_1^{2}x_2x_3^{5}x_4^{9}x_5^{14}+
x_1^{2}x_2x_3^{5}x_4^{10}x_5^{13}+
\medskip
x_1^{2}x_2x_3^{6}x_4^{9}x_5^{13}\\
&\quad +
x_1^{2}x_2x_3^{10}x_4^{5}x_5^{13}+
x_1^{2}x_2x_3^{12}x_4^{5}x_5^{11}+
x_1^{2}x_2^{5}x_3x_4^{9}x_5^{14}+
\medskip
x_1^{2}x_2^{5}x_3x_4^{10}x_5^{13}\\
&\quad +
x_1^{2}x_2^{5}x_3x_4^{12}x_5^{11}+
x_1^{2}x_2^{5}x_3^{8}x_4^{3}x_5^{13}+
x_1^{2}x_2^{5}x_3^{8}x_4^{5}x_5^{11}+
\medskip
x_1^{2}x_2^{5}x_3^{8}x_4^{9}x_5^{7}\\
&\quad +
x_1^{3}x_2x_3^{4}x_4^{9}x_5^{14}+
x_1^{3}x_2x_3^{4}x_4^{10}x_5^{13}+
x_1^{3}x_2x_3^{6}x_4^{8}x_5^{13}+
\medskip
x_1^{3}x_2x_3^{8}x_4^{6}x_5^{13}\\
&\quad +
x_1^{3}x_2^{2}x_3^{4}x_4^{9}x_5^{13}+
x_1^{3}x_2^{2}x_3^{5}x_4^{8}x_5^{13}+
x_1^{3}x_2^{3}x_3x_4^{12}x_5^{12}+
\medskip
x_1^{3}x_2^{3}x_3^{4}x_4^{12}x_5^{9}\\
&\quad +
x_1^{3}x_2^{3}x_3^{8}x_4^{4}x_5^{13}+
x_1^{3}x_2^{3}x_3^{8}x_4^{5}x_5^{12}+
x_1^{3}x_2^{4}x_3x_4^{9}x_5^{14}+
\medskip
x_1^{3}x_2^{4}x_3x_4^{10}x_5^{13}\\
&\quad +
x_1^{3}x_2^{4}x_3x_4^{12}x_5^{11}+
x_1^{3}x_2^{4}x_3^{2}x_4^{9}x_5^{13} \mod (\mathbb P_{31}^{\otimes 5})^{< \omega_{(2)}},
\end{array}$$
from which we deduce that $x_1^{3}x_2^{4}x_3^{3}x_4^{8}x_5^{13}$ is strictly inadmissible. Next, we have
$$ \begin{array}{ll}
x_1^{7}x_2^{7}x_3^{8}x_4^{8}x_5 & = Sq^{1}\big(x_1^{7}x_2^{13}x_3x_4^{4}x_5^{5}+
x_1^{7}x_2^{13}x_3x_4^{5}x_5^{4}+
x_1^{7}x_2^{13}x_3^{4}x_4x_5^{5}+
\medskip
x_1^{7}x_2^{13}x_3^{4}x_4^{5}x_5\\
&\quad +
x_1^{7}x_2^{13}x_3^{5}x_4x_5^{4}+
\medskip
x_1^{7}x_2^{13}x_3^{5}x_4^{4}x_5\big)\\
&\quad + Sq^{2}\big(x_1^{7}x_2^{11}x_3x_4^{4}x_5^{6}+
x_1^{7}x_2^{11}x_3x_4^{6}x_5^{4}+
x_1^{7}x_2^{11}x_3^{4}x_4x_5^{6}+
\medskip
x_1^{7}x_2^{11}x_3^{4}x_4^{6}x_5\\
&\quad + 
x_1^{7}x_2^{11}x_3^{6}x_4x_5^{4}+
x_1^{7}x_2^{11}x_3^{6}x_4^{4}x_5+
x_1^{7}x_2^{13}x_3^{2}x_4^{2}x_5^{5}+
\medskip
x_1^{7}x_2^{13}x_3^{2}x_4^{5}x_5^{2}\\
&\quad + 
x_1^{7}x_2^{13}x_3^{5}x_4^{2}x_5^{2}+
x_1^{7}x_2^{14}x_3x_4^{2}x_5^{5}+
x_1^{7}x_2^{14}x_3x_4^{3}x_5^{4}+
\medskip
x_1^{7}x_2^{14}x_3^{2}x_4x_5^{5}\\
&\quad + 
x_1^{7}x_2^{14}x_3^{3}x_4x_5^{4}+
x_1^{7}x_2^{14}x_3^{3}x_4^{4}x_5+
\medskip
x_1^{7}x_2^{14}x_3^{4}x_4^{3}x_5\big)\\
\end{array}$$

\newpage
$$ \begin{array}{ll}
&\quad + Sq^{4}\big(x_1^{5}x_2^{11}x_3x_4^{4}x_5^{6}+
x_1^{5}x_2^{11}x_3x_4^{6}x_5^{4}+
x_1^{5}x_2^{11}x_3^{4}x_4x_5^{6}+
\medskip
x_1^{5}x_2^{11}x_3^{4}x_4^{6}x_5\\
&\quad + 
x_1^{5}x_2^{11}x_3^{6}x_4x_5^{4}+
x_1^{5}x_2^{11}x_3^{6}x_4^{4}x_5+
x_1^{5}x_2^{13}x_3^{2}x_4^{2}x_5^{5}+
\medskip
x_1^{5}x_2^{13}x_3^{2}x_4^{5}x_5^{2}\\
&\quad + 
x_1^{5}x_2^{13}x_3^{5}x_4^{2}x_5^{2}+
x_1^{5}x_2^{14}x_3x_4^{2}x_5^{5}+
x_1^{5}x_2^{14}x_3x_4^{3}x_5^{4}+
\medskip
x_1^{5}x_2^{14}x_3^{2}x_4x_5^{5}\\
&\quad + 
x_1^{5}x_2^{14}x_3^{3}x_4x_5^{4}+
x_1^{5}x_2^{14}x_3^{3}x_4^{4}x_5+
x_1^{5}x_2^{14}x_3^{4}x_4^{3}x_5+
\medskip
x_1^{11}x_2^{7}x_3x_4^{4}x_5^{4}\\
&\quad + 
x_1^{11}x_2^{7}x_3^{4}x_4x_5^{4}+
\medskip
x_1^{11}x_2^{7}x_3^{4}x_4^{4}x_5\big)\\
&\quad  + Sq^{8}\big(x_1^{7}x_2^{7}x_3x_4^{4}x_5^{4}+
x_1^{7}x_2^{7}x_3^{4}x_4x_5^{4}+
\medskip
x_1^{7}x_2^{7}x_3^{4}x_4^{4}x_5\big)\\
&\quad +x_1^{5}x_2^{11}x_3x_4^{4}x_5^{10}+
x_1^{5}x_2^{11}x_3x_4^{6}x_5^{8}+
x_1^{5}x_2^{11}x_3x_4^{8}x_5^{6}+
\medskip
x_1^{5}x_2^{11}x_3x_4^{10}x_5^{4}\\
&\quad + 
x_1^{5}x_2^{11}x_3^{4}x_4x_5^{10}+
x_1^{5}x_2^{11}x_3^{4}x_4^{10}x_5+
x_1^{5}x_2^{11}x_3^{6}x_4x_5^{8}+
\medskip
x_1^{5}x_2^{11}x_3^{6}x_4^{8}x_5\\
&\quad + 
x_1^{5}x_2^{11}x_3^{8}x_4x_5^{6}+
x_1^{5}x_2^{11}x_3^{8}x_4^{6}x_5+
x_1^{5}x_2^{11}x_3^{10}x_4x_5^{4}+
\medskip
x_1^{5}x_2^{11}x_3^{10}x_4^{4}x_5\\
&\quad + 
x_1^{5}x_2^{13}x_3^{2}x_4^{2}x_5^{9}+
x_1^{5}x_2^{13}x_3^{2}x_4^{9}x_5^{2}+
x_1^{5}x_2^{13}x_3^{9}x_4^{2}x_5^{2}+
\medskip
x_1^{5}x_2^{14}x_3x_4^{2}x_5^{9}\\
&\quad + 
x_1^{5}x_2^{14}x_3x_4^{3}x_5^{8}+
x_1^{5}x_2^{14}x_3^{2}x_4x_5^{9}+
x_1^{5}x_2^{14}x_3^{3}x_4x_5^{8}+
\medskip
x_1^{5}x_2^{14}x_3^{3}x_4^{8}x_5\\
&\quad + 
x_1^{5}x_2^{14}x_3^{8}x_4^{3}x_5+
x_1^{7}x_2^{7}x_3x_4^{8}x_5^{8}+
x_1^{7}x_2^{7}x_3^{8}x_4x_5^{8} \mod (\mathbb P_{31}^{\otimes 5})^{< \omega_{(2)}}.
\end{array}$$
These equalities show that $x_1^{7}x_2^{7}x_3^{8}x_4^{8}x_5$ is also strictly inadmissible. The lemma is proven. 
\end{proof}

Now, for each $1\leq l\leq 5,$ we put
$$ \begin{array}{ll}
\medskip
A(30) &= \{x_l\mathsf{q}_{(l,\,5)}(X):\ X\in (\mathscr {C}^{\otimes 4}_{30})^{>0},\} ,\\
\medskip
A(28) &= \{x_l^{3}\mathsf{q}_{(l,\,5)}(X):\ X\in (\mathscr {C}^{\otimes 4}_{28})^{>0}\},\\
\medskip
A(24) &= \{x_l^{7}\mathsf{q}_{(l,\,5)}(X):\ X\in (\mathscr {C}^{\otimes 4}_{24})^{>0}\},\\
\medskip
A(16) &= \{x_l^{15}\mathsf{q}_{(l,\,5)}(X):\ X\in (\mathscr {C}^{\otimes 4}_{16})^{>0}\},
\end{array}$$
and $A(\omega_{(i)}) = (\mathbb P^{\otimes 5})^{\omega_{(i)}^{>0}}\cap (A(30)\cup A(28)\cup A(24)\cup A(16)),$ where $$(\mathbb P^{\otimes 5})^{\omega_{(i)}^{>0}} := \langle \{X\in (\mathbb P^{\otimes 5})^{>0}:\ \omega(X)=\omega_{(i)}\} \rangle,\ i = 2, 3.$$
It is not hard to show that $A(\omega_{(2)}) = \{X_j|\ 1\leq j\leq 68\}$ and $\widetilde{\Phi^{>0}}((\mathscr C^{\otimes 4}_{31})^{\omega_{(2)}})\cap A(\omega_{(2)}) \neq \emptyset.$ We put $B(\omega_{(2)}):= (\widetilde{\Phi^{>0}}((\mathscr C^{\otimes 4}_{31})^{\omega_{(2)}}) \setminus A(\omega_{(2)})) \cup \{X_j|\ 154\leq j\leq 215\},$ where a direct computation shows that $\widetilde{\Phi^{>0}}((\mathscr C^{\otimes 4}_{31})^{\omega_{(2)}}) \setminus A(\omega_{(2)}) = \{X_j|\ 69\leq j\leq 153\}.$ Here, the monomials $X_j,$ for all $j,\ 1\leq j\leq 215,$ are determined as in the Appendix (see Subsect.\ref{s31}). 
 
\begin{propo}\label{md2}
$(Q^{\otimes 5}_{31})^{\omega_{(2)}^{>0}}$ is an $\mathbb Z_2$-vector space of dimension $215$ with the basis $$S = [A(\omega_{(2)}) \cup B(\omega_{(2)})]_{\omega_{(2)}}.$$
\end{propo}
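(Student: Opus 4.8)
The plan is to prove the two halves of the statement separately: that $S=[A(\omega_{(2)})\cup B(\omega_{(2)})]_{\omega_{(2)}}$ \emph{spans} $(Q^{\otimes 5}_{31})^{\omega_{(2)}^{>0}}$, and that $S$ is \emph{linearly independent} in $(Q^{\otimes 5}_{31})^{\omega_{(2)}}$; together with the fact that the monomials $X_1,\dots,X_{215}$ of Subsect.\ref{s31} are pairwise distinct, this yields $\dim(Q^{\otimes 5}_{31})^{\omega_{(2)}^{>0}}=215$ with basis $S$. As a preliminary I would record that every monomial in $A(\omega_{(2)})\cup B(\omega_{(2)})$ is admissible, of weight exactly $\omega_{(2)}$, and lies in $(\mathbb P^{\otimes 5}_{31})^{>0}$: the elements of $A(\omega_{(2)})\subseteq A(30)\cup A(28)\cup A(24)\cup A(16)$ have the shape $x_l^{2^d-1}\mathsf{q}_{(l,5)}(X)$ with $X\in\mathscr C^{\otimes 4}$, hence are admissible by Theorem \ref{dlMU}; the elements of $\widetilde{\Phi^{>0}}((\mathscr C^{\otimes 4}_{31})^{\omega_{(2)}})$ are admissible because Conjecture \ref{gtSum} can be checked directly in this degree (cf.\ Remark \ref{nx}); and the remaining monomials $X_{154},\dots,X_{215}$ are admissible by a direct computation, confirmed by the independence step below.

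\textbf{Spanning.} Since $\omega_{(2)}=(3,2,2,2)$ has $\omega_1=3$, the argument in the proof of Theorem \ref{dlc1} shows that every $X\in(\mathscr C^{\otimes 5}_{31})^{\omega_{(2)}^{>0}}$ has the form $X=x_ax_bx_cT^2$ with $1\leq a<b<c\leq 5$ and $T\in\mathscr C^{\otimes 5}_{14}$, $\omega(T)=(2,2,2)$; inserting the known admissible basis of $\mathbb P^{\otimes 5}_{14}$ (Ly--Tin \cite{L.T}) produces a finite, explicitly enumerable list of candidate exponent vectors, each taken up to permutation of $x_1,\dots,x_5$. For each candidate $X$ I would verify the dichotomy: either $X\in A(\omega_{(2)})\cup B(\omega_{(2)})$, or, after a permutation of the variables, $X$ factors as $X=F\,G^{2^s}$ with $F$ one of the strictly inadmissible monomials of Lemmata \ref{bd1} and \ref{bd2} and $\omega_j(F)=0$ for some $j>s$. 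In the second case Theorem \ref{dlKS} makes $X$ inadmissible, and the strict-inadmissibility relations of those lemmata, propagated via Theorem \ref{dlKS} and reduced modulo $(\mathbb P^{\otimes 5}_{31})^{<\omega_{(2)}}$, rewrite $[X]_{\omega_{(2)}}$ as a $\mathbb Z_2$-combination of classes of strictly smaller monomials of weight $\leq\omega_{(2)}$; a finite descent on this poset lands every such class in the span of $S$. Hence $S$ spans $(Q^{\otimes 5}_{31})^{\omega_{(2)}^{>0}}$.

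\textbf{Linear independence.} Suppose $\sum_{j=1}^{215}\gamma_j X_j\equiv_{\omega_{(2)}}0$ with $\gamma_j\in\mathbb Z_2$. I would apply the $\mathscr A_2$-algebra homomorphisms $\mathsf{p}_{(l,\mathscr L)}\colon\mathbb P^{\otimes 5}_{31}\to\mathbb P^{\otimes 4}_{31}$ for the various $(l,\mathscr L)\in\mathcal N_5$ with $\mathscr L\neq\emptyset$ (those with $\mathscr L=\emptyset$ annihilate everything in the $>0$ part). Since $\mathsf{p}_{(l,\mathscr L)}$ preserves the weight filtration, $\mathsf{p}_{(l,\mathscr L)}(Y)\in(\mathbb P^{\otimes 4}_{31})^{\leq\omega(Y)}$, and carries hit elements to hit elements, each choice of $(l,\mathscr L)$ turns the relation into an identity among admissible monomials of $\mathbb P^{\otimes 4}_{31}$ of weight $\leq\omega_{(2)}$, whose classes are linearly independent by Sum \cite{N.S1}. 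Comparing coefficients in these identities forces successive blocks of the $\gamma_j$ to vanish; ranging over enough pairs $(l,\mathscr L)$, and separating the few remaining indices by explicit relations in $\mathbb P^{\otimes 5}_{31}$ obtained from the Cartan formula as in Lemmata \ref{bd1} and \ref{bd2}, yields $\gamma_j=0$ for all $j$. Thus $S$ is a basis and $\dim(Q^{\otimes 5}_{31})^{\omega_{(2)}^{>0}}=215$.

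\textbf{Main obstacle.} The skeleton above is routine; the real difficulty is one of scale and bookkeeping. Showing that $A(\omega_{(2)})\cup B(\omega_{(2)})$ is exhaustive means running through all permutations of every candidate exponent vector and, for each one off the list, exhibiting the strictly inadmissible factor — precisely the content of Lemmata \ref{bd1} and \ref{bd2}, each of whose relations is a long Cartan-formula expansion that must be validated term by term modulo $(\mathbb P^{\otimes 5}_{31})^{<\omega_{(2)}}$. On the independence side, the linear system in $215$ unknowns coming from the maps $\mathsf{p}_{(l,\mathscr L)}$ is large, and arranging the elimination so that every $\gamma_j$ is pinned down depends delicately on the explicit form of Sum's basis for $\mathbb P^{\otimes 4}_{31}$. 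This is why the proposition, like Propositions \ref{md1} and \ref{md3}, is additionally verified by the MAGMA algorithm in the Appendix.
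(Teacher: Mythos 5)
Your proposal is correct and follows essentially the same route as the paper: factor each admissible monomial of $(\mathscr C^{\otimes 5}_{31})^{\omega_{(2)}^{>0}}$ as $x_ix_jx_kY^2$ with $Y\in(\mathscr C^{\otimes 5}_{14})^{(2,2,2)}$, eliminate every candidate outside $\{X_1,\dots,X_{215}\}$ via the strictly inadmissible monomials of Lemmata \ref{bd1} and \ref{bd2} (or an inadmissible factor $Z_1$ with $T=Z_1Y_1^4$) together with Theorem \ref{dlKS}, and then establish linear independence by pushing a putative relation through the homomorphisms $\mathsf{p}_{(l,\mathscr L)}$ into Sum's admissible basis of $\mathbb P^{\otimes 4}_{31}$. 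The only cosmetic difference is your framing of spanning as a descent on the rewriting poset, which the paper replaces by the standard fact that the admissible classes already form a basis of the quotient.
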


\begin{proof}
Let $\omega:= (2,2,2)$ be the weight vector of degree $14$ and let $X$ be an admissible monomial in $(\mathscr C_{31}^{\otimes 5})^{\omega_{(2)}^{>0}}.$ Then $X = x_ix_jx_kY^2$ with $1\leq i<j<k\leq 5,$ and $Y\in \mathbb P_{14}^{\otimes 5}.$ Since $X\in  (\mathscr C_{31}^{\otimes 5})^{\omega_{(2)}}$, due to Theorem \ref{dlKS}, $Y\in (\mathscr C_{14}^{\otimes 5})^{\omega}.$ By a direct computation, we see that if $Z\in (\mathscr C^{\otimes 5}_{14})^{\omega}$ and the monomial $T:=x_ix_jx_kZ^2\neq X_m,$ for all $m,\ 1\leq m\leq 215,$ then either $T$ is one of the monomials as given in Lemmas \ref{bd1}, \ref{bd2}, or $T$ is of the form $Z_1Y_1^{4},$ where $Z_1$ is one of the following inadmissible monomials: $x_1x_2^{2}x_3^{2}x_4x_5,\ x_1^{3}x_2^{2}x_3x_5,\ x_1^{3}x_2^{2}x_3x_4,\ x_1^{3}x_2^{2}x_4x_5, \ x_1^{3}x_3^{2}x_4x_5,\ x_2^{3}x_3^{2}x_4x_5,\ x_2^{2}x_3x_4x_5^{3},\ x_2^{2}x_3x_4^{3}x_5,\ x_2^{2}x_3^{3}x_4x_5,\\ x_1^{2}x_3x_4x_5^{3},\ x_1^{2}x_3x_4^{3}x_5,\ x_1^{2}x_3^{3}x_4x_5,\ x_1^{2}x_2x_4x_5^{3},\ x_1^{2}x_2x_4^{3}x_5,\ x_1^{2}x_2x_3x_5^{3},\ x_1^{2}x_2x_3x_4^{3},\ x_1^{2}x_2x_3^{3}x_5,\ x_1^{2}x_2x_3^{3}x_4,\\ x_1^{2}x_2^{3}x_4x_5,\ x_1^{2}x_2^{3}x_3x_5,\ x_1^{2}x_2^{3}x_3x_4,\ x_1^{2}x_2x_3x_4x_5^{2},\ x_1^{2}x_2x_3x_4^{2}x_5,\ x_1^{2}x_2x_3^{2}x_4x_5.$ Then, by Theorem \ref{dlKS}, $T$ is inadmissible. Since $X = x_ix_jx_kY^2$ is admissible, we have $X = X_m$ for some $m,\ 1\leq m\leq 215.$ Thus, the statement will be confirmed if the set $S$ is linearly independent in $(Q^{\otimes 5}_{31})^{\omega_{(2)}}.$ Indeed, assume that there is a linear relation $\mathcal M:= \sum_{1\leq m\leq 215}\gamma_m X_m \equiv_{\omega_{(2)}} 0,$ wherenever $\gamma_m\in \mathbb Z_2.$ For each $(l, \mathscr L)\in \mathcal N_5,$ we explicitly determine $\mathsf{p}_{(l, \mathscr L)}(\mathcal M)$ in terms of  admissible monomials modulo ($\overline{\mathscr A_2}(\mathbb P_{31}^{\otimes 4})^{>0}$). Then, from the relations $\mathsf{p}_{(l, \mathscr L)}(\mathcal M)\equiv_{\omega_{(2)}} 0,$ for all $(l, \mathscr L)\in \mathcal N_5,\ \ell(\mathscr L) > 0,$ and some other simple computations, we find that $\gamma_m = 0$, $\forall m,\,  1\leq m\leq 215.$ The proposition is proved.
\end{proof}

\begin{rems}\label{nx}
According to Sum \cite{N.S1}, we have a direct summand decomposition of the $\mathbb Z_2$-vector spaces: $ Q^{\otimes 4}_{31} = (Q^{\otimes 4}_{31})^{0}\bigoplus (Q^{\otimes 4}_{31})^{\omega_{(1)}^{>0}}\bigoplus (Q^{\otimes 4}_{31})^{\omega_{(2)}^{>0}}.$ Since $(Q^{\otimes 4}_{31})^{\omega_{(1)}^{>0}} = \langle [x_1x_2^2x_3^4x_4^{24}] \rangle$ and the powers of $x_1x_2^2x_3^4x_4^{24}$ do not satisfy \eqref{dk} in Conjecture \ref{gtSum}, $\widetilde{\Phi^{>0}}((\mathscr{C}^{\otimes 4}_{31})^{\omega_{(1)}}) = \emptyset.$ By a simple computation, one gets $\widetilde{\Phi^{0}}((\mathscr{C}^{\otimes 4}_{31})^{\omega_{(2)}})\subseteq (\mathscr C^{\otimes 5}_{31})^{\omega_{(2)}^{0}}$, and $$(\mathscr C^{\otimes 5}_{31})^{\omega_{(2)}^{>0}} = \widetilde{\Phi^{>0}}((\mathscr{C}^{\otimes 4}_{n})^{\omega_{(2)}})\cup \big(A(\omega_{(2)})\setminus \{X_j|\ 1\leq j\leq 39\}\big)\cup \{X_j|\ 154\leq j\leq 215\},$$
which implies that $\widetilde{\Phi^{>0}}((\mathscr{C}^{\otimes 4}_{n})^{\omega_{(2)}})\subseteq (\mathscr C^{\otimes 5}_{31})^{\omega_{(2)}^{>0}}=A(\omega_{(2)}) \cup B(\omega_{(2)}).$ Further, we see that if the weight vector $\omega\neq \omega_{(j)},\ j = 1, 2,$ then $(\mathscr C^{\otimes 4}_{31})^{\omega} = \emptyset.$ So, by the proof of Proposition \ref{md3} below, it follows that $\widetilde{\Phi_*}((\mathscr{C}^{\otimes 4}_{31})^{\omega_{(3)}}) = \emptyset.$ Combining these data, we may conclude that Conjecture \ref{gtSum} holds for $t = 5$ and degree $31.$
\end{rems}

\begin{propo}\label{md3}
$(Q^{\otimes 5}_{31})^{\omega_{(3)}^{>0}}$ is an $\mathbb Z_2$-vector space of dimension $70$ with the basis $$ \mathcal S = [\{Y_j|\ 1\leq j\leq 50 \} \cup A(\omega_{(3)})]_{\omega_{(3)}},$$
where $A(\omega_{(3)}) = \{Y_j|\ 51\leq j\leq 70\}$ and the monomials $Y_j,\ 1\leq j\leq 70,$ are given as in the Subsect.\ref{s32} of the Appendix.
\end{propo}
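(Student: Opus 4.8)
The plan is to follow the same two-part strategy used in the proof of Proposition \ref{md2}: first show that the proposed set spans $(Q^{\otimes 5}_{31})^{\omega_{(3)}^{>0}}$, then show it is linearly independent in $(Q^{\otimes 5}_{31})^{\omega_{(3)}}$. For the spanning part, I would start from an arbitrary admissible monomial $X\in (\mathscr C_{31}^{\otimes 5})^{\omega_{(3)}^{>0}}$. Since $\omega_{(3)} = (3,4,3,1)$ has $\omega_1 = 3$, we may write $X = x_ax_bx_cT^{2}$ with $1\leq a<b<c\leq 5$ and $T$ a monomial of degree $14$; by Theorem \ref{dlKS}, the admissibility of $X$ forces $T\in(\mathscr C_{14}^{\otimes 5})^{\omega}$ with $\omega = (2,3,1)$ (the weight vector obtained from $\omega_{(3)}$ by deleting the first coordinate). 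The key step is then to enumerate, via a direct computation, all monomials of the form $x_ax_bx_c Z^{2}$ with $Z\in(\mathscr C_{14}^{\otimes 5})^{\omega}$ and to verify that each such monomial not appearing in the list $\{Y_j \mid 1\leq j\leq 70\}$ is inadmissible — either because it coincides (up to the equivalence $\equiv$) with one of the strictly inadmissible monomials established in Lemmata \ref{bd1} and \ref{bd2}, or because it has the form $Z_1 Y_1^{2^{s}}$ for a small strictly inadmissible factor $Z_1$ and hence is inadmissible by Theorem \ref{dlKS}. This is exactly parallel to the case analysis in Proposition \ref{md2}, and the set $A(\omega_{(3)}) = \{Y_j\mid 51\leq j\leq 70\}$ is the part of the list coming from the images $x_l^{2^d-1}\mathsf q_{(l,5)}(\mathscr C^{\otimes 4})$, which are admissible by Theorem \ref{dlMU}.

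For the linear independence part, I would suppose a relation $\mathcal M := \sum_{1\leq j\leq 70}\gamma_j Y_j \equiv_{\omega_{(3)}} 0$ with $\gamma_j\in\mathbb Z_2$, and apply the homomorphisms $\mathsf p_{(l,\mathscr L)}\colon \mathbb P_{31}^{\otimes 5}\to\mathbb P_{31}^{\otimes 4}$ from Subsection \ref{s2}. Since each $\mathsf p_{(l,\mathscr L)}$ is an $\mathscr A_2$-homomorphism sending $(\mathbb P_{31}^{\otimes 5})^{\leq\omega_{(3)}}$ into $(\mathbb P_{31}^{\otimes 4})^{\leq\omega_{(3)}}$, the relations $\mathsf p_{(l,\mathscr L)}(\mathcal M)\equiv_{\omega_{(3)}} 0$ hold in $(Q^{\otimes 4}_{31})^{\omega_{(3)}}$ for all $(l,\mathscr L)\in\mathcal N_5$ with $\ell(\mathscr L)>0$. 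Using the known admissible basis of $Q^{\otimes 4}_{31}$ from Sum \cite{N.S1} (and Remark \ref{nx}, which records that $(\mathscr C^{\otimes 4}_{31})^{\omega_{(3)}}=\emptyset$), each such image is expressible in terms of a linearly independent set of admissible monomials modulo $\overline{\mathscr A_2}\mathbb P_{31}^{\otimes 4}$, so equating coefficients yields a linear system in the $\gamma_j$. Combining these with a few additional direct evaluations (e.g., applying the $\mathsf p_{(l,\emptyset)}$ and tracking spike-type terms), one deduces $\gamma_j = 0$ for all $j$, which gives $\dim(Q^{\otimes 5}_{31})^{\omega_{(3)}^{>0}} = 70$ and confirms that $\mathcal S$ is a basis.

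The main obstacle, as in Proposition \ref{md2}, is the spanning step: one must produce a complete and correct enumeration of the monomials $x_ax_bx_c Z^{2}$ arising from $(\mathscr C_{14}^{\otimes 5})^{\omega}$, which requires first knowing $(\mathscr C_{14}^{\otimes 5})^{\omega}$ explicitly, and then for each candidate that is not in the claimed list exhibiting a concrete hit-equation or a factorization reducing it (via Theorem \ref{dlKS}) to one of the strictly inadmissible monomials of Lemmata \ref{bd1}–\ref{bd2}. This bookkeeping is lengthy but routine; the strictly inadmissible monomials needed have essentially already been assembled in those lemmata, so the new content is mainly organizational. The linear independence step, while computationally heavy, is mechanical once the $\mathsf p_{(l,\mathscr L)}$-images are written out, and the verification is in any case supported by the MAGMA algorithm in the Appendix.
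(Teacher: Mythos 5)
Your overall two-step strategy (spanning via elimination of inadmissible candidates, then linear independence via the maps $\mathsf{p}_{(l,\mathscr L)}$) is the same as the paper's, and your treatment of $A(\omega_{(3)})$ via Theorem \ref{dlMU} and of the independence step via $\mathsf{p}_{(l,\mathscr L)}(\mathcal M)\equiv 0$ matches what is actually done. However, there are two problems. First, a factual slip: if $Y=x_ax_bx_cZ^2$ has $\omega(Y)=\omega_{(3)}=(3,4,3,1)$, then the weight vector of $Z$ obtained by deleting the first coordinate is $\overline{\omega}=(4,3,1)$ (of degree $14$), not $(2,3,1)$ (which has degree $12$ and is incompatible with $\deg Z=14$).

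Second, and more seriously, the elimination step cannot be outsourced to Lemmata \ref{bd1} and \ref{bd2}: every monomial listed there has weight vector $\omega_{(2)}=(3,2,2,2)$ and is shown to be strictly inadmissible modulo $(\mathbb P_{31}^{\otimes 5})^{<\omega_{(2)}}$; those computations say nothing about candidates of weight vector $\omega_{(3)}$. The genuinely new content of the proof of Proposition \ref{md3} is a fresh collection of inadmissible monomials adapted to $\omega_{(3)}$ --- in the paper, eighteen low-degree inadmissible factors $T_1,\dots,T_{18}$ used through Theorem \ref{dlKS}, plus a further list of twenty-three monomials of weight $\omega_{(3)}$ that must be proved strictly inadmissible by explicit Cartan-formula hit equations modulo $(\mathbb P^{\otimes 5}_{31})^{<\omega_{(3)}}$ (e.g.\ for $x_1x_2^{3}x_3^{6}x_4^{14}x_5^{7}$ and $x_1^{3}x_2^{13}x_3^{6}x_4^{3}x_5^{6}$). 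Your proposal asserts that the needed strictly inadmissible monomials "have essentially already been assembled" and that the remaining work is "mainly organizational"; that is not the case, and without producing these new relations the spanning argument does not close.
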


\begin{proof}
Firstly, let $i, j, k, l,  m$ be five distinct integers such that $1\leq i, j, k, l, m\leq 5.$ Then, it is easy to see that the following monomials are inadmissible:
\begin{itemize}
\item[(i)] $T_1:=x_j^{2}x_k^{7}x_l^{7}x_m^{7},\ T_2:=x_j^{3}x_k^{6}x_l^{7}x_m^{7},\ T_3:=x_ix_j^{2}x_k^{6}x_l^{7}x_m^{7},$
\item[(ii)] $T_4:=x_i^{2}x_j^{2}x_k^{5}x_l^{7}x_m^{7},\ T_5:=x_i^{2}x_j^{3}x_k^{4}x_l^{7}x_m^{7},\ T_6:=x_i^{3}x_j^{3}x_k^{4}x_l^{6}x_m^{7},$
\item[(iii)] $T_7:=x_i^{7}x_jx_k^{6}x_l^{3}x_m^{6}\ T_8:=x_i^{7}x_j^{3}x_k^{6}x_lx_m^{6},\ T_9:=x_i^{7}x_j^{6}x_kx_l^{3}x_m^{6},\ T_{10}:=x_i^{7}x_j^{6}x_k^{3}x_lx_m^{6},$ with $j < k < l,$
\item[(iv)] $T_{11}:=x_i^{3}x_j^{2}x_k^{5}x_l^{6}x_m^{7},\ T_{12}:=x_i^{3}x_j^{6}x_k^{5}x_l^{3}x_m^{6}$ with $j < k,$
\item[(v)] $T_{13}:=x_1^{3}x_2^{6}x_3^{5}x_4^{2}x_5^{7},\ T_{14}:=x_1^{3}x_2^{6}x_3^{5}x_4^{7}x_5^{2},\ T_{15}:=x_1^{3}x_2^{6}x_3^{7}x_4^{5}x_5^{2},\\ T_{16}:=x_1^{3}x_2^{7}x_3^{6}x_4^{5}x_5^{2},\ T_{17}:=x_1^{7}x_2^{3}x_3^{6}x_4^{5}x_5^{2},\ T_{18}:=x_1^{7}x_2^{6}x_3^{3}x_4^{5}x_5^{2}.$

\end{itemize}
Consider the weight vector $\overline{\omega}:= (4,3,1)$ of degree $14.$ Suppose that $Y$ is an admissible monomial in $(\mathscr C_{31}^{\otimes 5})^{\omega_{(3)}^{>0}}.$ Then $Y = x_ax_bx_cZ^2$ with $1\leq a<b<c\leq 5,$ and $Z\in \mathbb P_{14}^{\otimes 5}.$ Since $Y$ is admissible, according to Theorem \ref{dlKS}, $Z$ is admissible and $\omega(Z) = \overline{\omega}.$ A direct computation shows that if $Z$ is an admissible monomial in $\mathbb P_{14}^{\otimes 5}$ such that the monomial $x_ax_bx_cZ_1^2\in (\mathbb P_{31}^{\otimes 5})^{\leq \omega_{(3)}}$ and $x_ax_bx_cZ_1^2\neq Y_j,$ for all $j,\ 1\leq j\leq 70,$ then either $x_ax_bx_cZ_1^2$ of the form $T_dW^{2^{s}},$ where $s$ is a suitable integer and $T_d$ is one of the above inadmissible monomials, $1\leq d\leq 18,$ or $x_ax_bx_cZ_1^2$ is one of the following monomials:

\begin{center}
\begin{tabular}{lllr}
$x_1x_2^{3}x_3^{6}x_4^{14}x_5^{7}$, & $x_1x_2^{3}x_3^{14}x_4^{6}x_5^{7}$, & $x_1x_2^{3}x_3^{14}x_4^{7}x_5^{6}$, & \multicolumn{1}{l}{$x_1^{3}x_2x_3^{6}x_4^{14}x_5^{7}$,} \\
$x_1^{3}x_2x_3^{14}x_4^{6}x_5^{7}$, & $x_1^{3}x_2x_3^{14}x_4^{7}x_5^{6}$, & $x_1^{3}x_2^{5}x_3^{2}x_4^{14}x_5^{7}$, & \multicolumn{1}{l}{$x_1^{3}x_2^{5}x_3^{14}x_4^{2}x_5^{7}$,} \\
$x_1^{3}x_2^{5}x_3^{14}x_4^{7}x_5^{2}$, & $x_1^{3}x_2^{13}x_3^{2}x_4^{6}x_5^{7}$, & $x_1^{3}x_2^{13}x_3^{2}x_4^{7}x_5^{6}$, & \multicolumn{1}{l}{$x_1^{3}x_2^{13}x_3^{6}x_4^{2}x_5^{7}$,} \\
$x_1^{3}x_2^{13}x_3^{6}x_4^{7}x_5^{2}$, & $x_1^{3}x_2^{13}x_3^{7}x_4^{2}x_5^{6}$, & $x_1^{3}x_2^{13}x_3^{7}x_4^{6}x_5^{2}$, & \multicolumn{1}{l}{$x_1^{3}x_2^{5}x_3^{14}x_4^{3}x_5^{6}$,} \\
$x_1^{3}x_2^{5}x_3^{14}x_4^{6}x_5^{3}$, & $x_1^{3}x_2^{13}x_3^{6}x_4^{3}x_5^{6}$, & $x_1^{3}x_2^{13}x_3^{6}x_4^{6}x_5^{3}$, & \multicolumn{1}{l}{$x_1^{3}x_2^{5}x_3^{6}x_4^{11}x_5^{6}$,} \\
$x_1^{3}x_2^{5}x_3^{6}x_4^{10}x_5^{7}$, & $x_1^{3}x_2^{5}x_3^{10}x_4^{6}x_5^{7}$, & $x_1^{3}x_2^{5}x_3^{10}x_4^{7}x_5^{6}.$ &  
\end{tabular}%
\end{center}
These monomials are strictly inadmissible. Indeed, we will prove this statement for the monomials $x_1x_2^{3}x_3^{6}x_4^{14}x_5^{7},$ and $x_1^{3}x_2^{13}x_3^{6}x_4^{3}x_5^{6}.$ The others can be computed by a similar technique. It is easily seen that $\omega(x_1x_2^{3}x_3^{6}x_4^{14}x_5^{7}) = \omega(x_1^{3}x_2^{13}x_3^{6}x_4^{3}x_5^{6}) = \omega_{(3)}.$ Basing Cartan's formula, it is not too difficult to indicate that
$$ \begin{array}{ll}
x_1x_2^{3}x_3^{6}x_4^{14}x_5^{7}&= Sq^{1}\big(x_1^{2}x_2^{3}x_3^{5}x_4^{7}x_5^{13}+
x_1^{2}x_2^{3}x_3^{5}x_4^{13}x_5^{7}+
x_1^{2}x_2^{5}x_3^{5}x_4^{7}x_5^{11}+
\medskip
x_1^{2}x_2^{5}x_3^{5}x_4^{11}x_5^{7}\\
&\quad +
x_1^{2}x_2^{5}x_3^{9}x_4^{7}x_5^{7}+
x_1^{2}x_2^{9}x_3^{5}x_4^{7}x_5^{7}+
\medskip
x_1^{5}x_2x_3^{7}x_4^{8}x_5^{9}\big)\\
&\quad +Sq^{2}\big(x_1x_2^{3}x_3^{3}x_4^{9}x_5^{13}+
x_1x_2^{3}x_3^{3}x_4^{13}x_5^{9}+
x_1x_2^{3}x_3^{5}x_4^{7}x_5^{13}+
\medskip
x_1x_2^{3}x_3^{5}x_4^{9}x_5^{11}\\
&\quad +
x_1x_2^{3}x_3^{5}x_4^{11}x_5^{9}+
x_1x_2^{3}x_3^{5}x_4^{13}x_5^{7}+
x_1x_2^{5}x_3^{5}x_4^{7}x_5^{11}+
\medskip
x_1x_2^{5}x_3^{5}x_4^{11}x_5^{7}\\
&\quad +
x_1x_2^{5}x_3^{9}x_4^{7}x_5^{7}+
\medskip
x_1x_2^{9}x_3^{5}x_4^{7}x_5^{7}\big)\\
&\quad + Sq^{4}\big(x_1x_2^{3}x_3^{3}x_4^{9}x_5^{11}+
x_1x_2^{3}x_3^{3}x_4^{11}x_5^{9}+
x_1x_2^{5}x_3^{5}x_4^{7}x_5^{9}+
x_1x_2^{5}x_3^{5}x_4^{9}x_5^{7}+
\medskip
x_1x_2^{6}x_3^{6}x_4^{7}x_5^{7}\big)\\
\medskip
&\quad  + x_1x_2^{3}x_3^{6}x_4^{7}x_5^{14} \mod (\mathbb P^{\otimes _5}_{31})^{<\omega_{(3)}},\\
x_1^{3}x_2^{13}x_3^{6}x_4^{3}x_5^{6} &= Sq^{1}\big(x_1^{3}x_2^{7}x_3^{5}x_4^{5}x_5^{10}+
x_1^{3}x_2^{7}x_3^{5}x_4^{6}x_5^{9}+
x_1^{3}x_2^{7}x_3^{6}x_4^{5}x_5^{9}+
\medskip
x_1^{3}x_2^{7}x_3^{6}x_4^{9}x_5^{5}\\
&\quad +
x_1^{3}x_2^{7}x_3^{9}x_4^{6}x_5^{5}+
x_1^{3}x_2^{11}x_3^{5}x_4^{6}x_5^{5}+
x_1^{3}x_2^{14}x_3^{3}x_4^{5}x_5^{5}+
\medskip
x_1^{6}x_2^{11}x_3^{5}x_4^{3}x_5^{5}\big)\\
&\quad  + Sq^{2}\big(x_1^{3}x_2^{11}x_3^{5}x_4^{5}x_5^{5}+
x_1^{3}x_2^{11}x_3^{6}x_4^{3}x_5^{6}+
x_1^{3}x_2^{13}x_3^{5}x_4^{3}x_5^{5}+
\medskip
x_1^{5}x_2^{7}x_3^{3}x_4^{5}x_5^{9}\\
&\quad +
x_1^{5}x_2^{7}x_3^{3}x_4^{9}x_5^{5}+
x_1^{5}x_2^{7}x_3^{5}x_4^{3}x_5^{9}+
x_1^{5}x_2^{7}x_3^{9}x_4^{3}x_5^{5}+
x_1^{5}x_2^{9}x_3^{3}x_4^{3}x_5^{9}+
\medskip
x_1^{5}x_2^{11}x_3^{3}x_4^{5}x_5^{5}\big)\\
&\quad  + Sq^{4}\big(x_1^{3}x_2^{7}x_3^{3}x_4^{5}x_5^{9}+
x_1^{3}x_2^{7}x_3^{3}x_4^{9}x_5^{5}+
x_1^{3}x_2^{7}x_3^{5}x_4^{3}x_5^{9}+
x_1^{3}x_2^{7}x_3^{9}x_4^{3}x_5^{5}+
\medskip
x_1^{3}x_2^{9}x_3^{3}x_4^{3}x_5^{9}\\
&\quad +
x_1^{3}x_2^{9}x_3^{5}x_4^{5}x_5^{5}+
x_1^{3}x_2^{11}x_3^{3}x_4^{5}x_5^{5}+
\medskip
x_1^{3}x_2^{11}x_3^{5}x_4^{3}x_5^{5}\big) +  x_1^{3}x_2^{13}x_3^{3}x_4^{6}x_5^{6} \mod (\mathbb P^{\otimes _5}_{31})^{<\omega_{(3)}}.
\end{array}$$
The above computations, together with Theorem \ref{dlKS}, allow us to claim that $x_ax_bx_cZ_1^2$ is inadmissible. Since $Y = x_ax_bx_cZ^2$ is admissible, we have $Y = Y_j$ for some $j,\ 1\leq j\leq 70.$ We now prove the set $\mathcal S$ is linearly independent in $(Q^{\otimes 5}_{31})^{\omega_{(3)}}.$ Suppose  there is a linear relation $\mathcal M:= \sum_{1\leq j\leq 70}\gamma_j Y_j \equiv 0,$ in which $\gamma_j\in \mathbb Z_2$ for $1\leq j\leq 70.$ By direct calculations using the relations $\mathsf{p}_{(l, \mathscr L)}(\mathcal M)\equiv 0,$ with $\ell(\mathscr L)\leq 2,$ it may be concluded that  $\gamma_j = 0,$ for all $j.$ The proof is completed.
\end{proof}

\subsection{Proof of Theorem \ref{dlc3}}

The proof of the theorem is proceeded through the following three steps.

\medskip

{\it Step 1.} Firstly, a direct computation shows that if $X\in \mathscr {C}^{\otimes 5}_{32},$  then $\omega(X)$ is one of the sequences $\omega'_{(j)},$ for $1\leq j\leq 7,$ where $ \omega'_{(4)}:= (2,1,1,3), \ \omega'_{(5)}:= (2,3,2,2), \ \omega'_{(6)}:= (2,3,4,1),$ and $\omega'_{(7)}:= (4,2,4,1).$ Indeed,  it is easy to see that $x_1^{31}x_2$ is the minimal spike in $\mathbb P_{32}^{\otimes 5}$ and $\omega(x_1^{31}x_2) = \omega'_{(1)}.$ Since $X\in \mathscr C_{32}^{\otimes 5}$ and $\deg(X) = 32$ even, by Theorem \ref{dlSin}, either $\omega_1(X) = 2$ or $\omega_1(X) = 4.$ If $\omega_1(X) = 2,$ then $X = x_ix_jY^2$ with $1\leq i < j\leq 5$ and $Y\in \mathbb P_{15}^{\otimes 5}.$ Since $X\in \mathscr {C}^{\otimes 5}_{32},$ by Theorem \ref{dlKS}, $Y\in \mathscr {C}^{\otimes 5}_{15}.$ According to Sum \cite{N.S}, we have $\omega(Y)\in \{(1,1,1,1), (1,1,3), (3,2,2), (3,4,1)\}$ and so $\omega(X)\in \{\omega'_{(j)}\}_{j = 1, 4, 5, 6}.$ In the case in which $\omega_1(X) = 4,$ we have seen that by Theorem \ref{dlKS}, $X$ has the form $x_ix_jx_kx_{\ell}T^{2}$ with $T\in \mathscr {C}^{\otimes 5}_{14}$ and $1\leq i<j<k<\ell\leq 5.$ From the calculations of Ly-Tin \cite{L.T}, we get $\omega(T)\in \{(2,2,2), (2,4,1), (4,3,1)\}$ and therefore $\omega(X)\in \{\omega'_{(j)}\}_{j = 2, 3, 7}.$  

\medskip

{\it Step 2.} Next, we will show that if $\omega(X)\in \{\omega'_{(j)}\}_{4\leq j\leq 7},$ then the space $(Q^{\otimes 5}_{32})^{\omega(X)}$ is trivial. Indeed, from the works of Peterson \cite{F.P1}, Kameko \cite{M.K} and Sum \cite{N.S1}, we deduce that
$$ \dim (Q_{32}^{\otimes s})^{>0} = \left\{\begin{array}{ll}
0,& \mbox{if } s = 1,\\
3,& \mbox{if } s = 2,\\
5,& \mbox{if } s = 3,\\
57,& \mbox{if } s = 4.
\end{array}\right.$$
The facts above show that
$$ \dim (Q^{\otimes 5}_{32})^{0}= \sum_{1\leq  s\leq 4}\binom{5}{s}\dim (Q_{32}^{\otimes\, s})^{>0} =  3\times\binom{5}{2} + 5\times\binom{5}{3} + 57 \times \binom{5}{4}=365.$$
Furthermore, by a simple computation, we get $(\mathscr C_{32}^{\otimes 5})^{0} = \bigcup_{1\leq l\leq 5}\mathsf{q}_{(l,\,5)}(\mathscr C_{32}^{\otimes 4}) = \bigcup_{1\leq j\leq 3}(\mathscr C_{32}^{\otimes 5})^{(\omega'_{(j)})^{0}},$ where $|(\mathscr C_{32}^{\otimes 5})^{(\omega'_{(1)})^{0}}| = 115,$  $|(\mathscr C_{32}^{\otimes 5})^{(\omega'_{(2)})^{0}}| = 175$ and $|(\mathscr C_{32}^{\otimes 5})^{(\omega'_{(3)})^{0}}| = 75.$ Thus, it may be concluded that
$$ \dim (Q_{32}^{\otimes 5})^{(\omega'_{(j)})^{0}} =\left\{ \begin{array}{ll}
115&\mbox{if $j = 1$},\\
175&\mbox{if $j = 2$},\\
75&\mbox{if $j = 3$},\\
0&\mbox{if $j = 4, 5, 6$}.
\end{array}\right.$$
This means that we need only to prove $(Q^{\otimes 5}_{32})^{(\omega'_{(j)})^{>0}} = 0$ for $j\in \{4, 5, 6\}.$ Recall that if $\omega(X)$ is one of the sequences $\omega'_{(j)}$ for $j = 4, 5, 6,$ then $X = x_ix_jY^2$ with $1\leq i < j\leq 5$ and $Y\in \mathscr C_{15}^{\otimes 5}.$ When $\omega(X) = \omega'_{(4)},$ we see that $X$ is one of the following monomials:
$$ x_1^{3}x_2^{4}x_3^{8}x_4^{8}x_5^{9}, \ \ x_1^{3}x_2^{4}x_3^{8}x_4^{9}x_5^{8}, \ \ x_1^{3}x_2^{4}x_3^{9}x_4^{8}x_5^{8}, \ \ x_1^{3}x_2^{5}x_3^{8}x_4^{8}x_5^{8}.$$
Due to Cartan's formula, one can easily see that
$$ \begin{array}{ll}
\medskip
x_1^{3}x_2^{4}x_3^{9}x_4^{8}x_5^{8} &= Sq^{1}(x_1^{3}x_2x_3^{3}x_4^{8}x_5^{16}) + Sq^{2}(x_1^{5}x_2^{2}x_3^{3}x_4^{4}x_5^{16}) + Sq^{4}(x_1^{3}x_2^{2}x_3^{3}x_4^{4}x_5^{16} + x_1^{3}x_2^{8}x_3^{5}x_4^{4}x_5^{8}) \\
&\quad + Sq^{8}(x_1^{3}x_2^{4}x_3^{5}x_4^{4}x_5^{8}) \mod (\mathbb P_{32}^{\otimes 5})^{< \omega'_{(4)}}
\end{array}$$
which implies that $x_1^{3}x_2^{4}x_3^{9}x_4^{8}x_5^{8}$ is $\omega'_{(4)}$-decomposable. Similarly, the remaining monomials are also $\omega'_{(4)}$-decomposable.\\[1mm]
Now if $\omega(X) = \omega'_{(5)},$ then $X$ is  a permutation of the following monomials:

\begin{center}
\begin{tabular}{llll}
$x_1x_2x_3^{2}x_4^{14}x_5^{14}$, & $x_1x_2x_3^{6}x_4^{10}x_5^{14}$, & $x_1x_2^{2}x_3^{2}x_4^{13}x_5^{14}$, & $x_1x_2^{2}x_3^{3}x_4^{12}x_5^{14}$, \\
$x_1x_2^{2}x_3^{5}x_4^{10}x_5^{14}$, & $x_1x_2^{2}x_3^{6}x_4^{10}x_5^{13}$, & $x_1x_2^{3}x_3^{4}x_4^{10}x_5^{14}$, & $x_1x_2^{3}x_3^{6}x_4^{8}x_5^{14}$, \\
$x_1x_2^{3}x_3^{6}x_4^{10}x_5^{12}$, & $x_1^{2}x_2^{2}x_3^{3}x_4^{12}x_5^{13}$, & $x_1^{2}x_2^{3}x_3^{4}x_4^{10}x_5^{13}$, & $x_1^{2}x_2^{3}x_3^{5}x_4^{8}x_5^{14}$, \\
$x_1^{2}x_2^{3}x_3^{5}x_4^{10}x_5^{12}$, & $x_1^{2}x_2^{3}x_3^{6}x_4^{8}x_5^{13}$, & $x_1^{3}x_2^{4}x_3^{5}x_4^{10}x_5^{10}$, & $x_1^{3}x_2^{5}x_3^{6}x_4^{8}x_5^{10}$.
\end{tabular}%
\end{center}

These monomials are $\omega'_{(5)}$-decomposable. Indeed, we prove this statement for the monomial $x_1^{3}x_2^{5}x_3^{6}x_4^{8}x_5^{10}.$ The others can be computed by similar techniques. Applying Cartan's formula, we get
$$ \begin{array}{ll}
\medskip
x_1^{3}x_2^{5}x_3^{6}x_4^{8}x_5^{10} &= Sq^{1}(x_1^{3}x_2^{3}x_3^{9}x_4^{4}x_5^{12} + x_1^{3}x_2^{3}x_3^{12}x_4^{4}x_5^{9} + x_1^{6}x_2^{3}x_3^{9}x_4^{4}x_5^{9} + x_1^{3}x_2^{6}x_3^{9}x_4^{8}x_5^{5})\\
\medskip
&\quad + Sq^{2}(x_1^{3}x_2^{5}x_3^{9}x_4^{4}x_5^{9} + x_1^{3}x_2^{3}x_3^{10}x_4^{4}x_5^{10} + x_1^{5}x_2^{3}x_3^{9}x_4^{8}x_5^{5})\\
\medskip
&\quad+ Sq^{4}(x_1^{3}x_2^{3}x_3^{9}x_4^{4}x_5^{9} + x_1^{3}x_2^{3}x_3^{9}x_4^{8}x_5^{5} + x_1^{3}x_2^{9}x_3^{6}x_4^{4}x_5^{6})\\
&\quad + Sq^{8}(x_1^{3}x_2^{5}x_3^{6}x_4^{4}x_5^{6}) \mod (\mathbb P_{32}^{\otimes 5})^{< \omega'_{(5)}}.
\end{array}$$
Next if $\omega(X) = \omega'_{(6)}$ then a direct computation shows that $X$ is of the form $LY_1^{2^{r}},$ where $r$ is a suitable integer and $L$ is one of the following monomials:

\begin{center}
\begin{tabular}{llll}
$L_{1}=x_1^{7}x_2^{2}x_3^{4}x_4^{4}x_5^{7}$, & $L_{2}=x_1^{7}x_2^{2}x_3^{4}x_4^{7}x_5^{4}$, & $L_{3}=x_1^{7}x_2^{2}x_3^{7}x_4^{4}x_5^{4}$, & $L_{4}=x_1^{7}x_2^{7}x_3^{2}x_4^{4}x_5^{4}$, \\
$L_{5}=x_1^{7}x_2^{2}x_3^{4}x_4^{6}x_5^{5}$, & $L_{6}=x_1^{7}x_2^{2}x_3^{6}x_4^{4}x_5^{5}$, & $L_{7}=x_1^{7}x_2^{6}x_3^{2}x_4^{4}x_5^{5}$, & $L_{8}=x_1^{7}x_2^{2}x_3^{4}x_4^{5}x_5^{6}$, \\
$L_{9}=x_1^{7}x_2^{2}x_3^{6}x_4^{5}x_5^{4}$, & $L_{10}=x_1^{7}x_2^{6}x_3^{2}x_4^{5}x_5^{4}$, & $L_{11}=x_1^{7}x_2^{2}x_3^{5}x_4^{4}x_5^{6}$, & $L_{12}=x_1^{7}x_2^{2}x_3^{5}x_4^{6}x_5^{4}$, \\
$L_{13}=x_1^{3}x_2^{4}x_3^{4}x_4^{6}x_5^{7}$, & $L_{14}=x_1^{3}x_2^{4}x_3^{6}x_4^{4}x_5^{7}$, & $L_{15}=x_1^{3}x_2^{6}x_3^{4}x_4^{4}x_5^{7}$, & $L_{16}=x_1^{3}x_2^{4}x_3^{4}x_4^{7}x_5^{6}$, \\
$L_{17}=x_1^{3}x_2^{4}x_3^{6}x_4^{7}x_5^{4}$, & $L_{18}=x_1^{3}x_2^{6}x_3^{4}x_4^{7}x_5^{4}$, & $L_{19}=x_1^{3}x_2^{4}x_3^{7}x_4^{4}x_5^{6}$, & $L_{20}=x_1^{3}x_2^{4}x_3^{7}x_4^{6}x_5^{4}$, \\
$L_{21}=x_1^{3}x_2^{6}x_3^{7}x_4^{4}x_5^{4}$, & $L_{22}=x_1^{3}x_2^{7}x_3^{4}x_4^{4}x_5^{6}$, & $L_{23}=x_1^{3}x_2^{7}x_3^{4}x_4^{6}x_5^{4}$, & $L_{24}=x_1^{3}x_2^{7}x_3^{6}x_4^{4}x_5^{4}$, \\
$L_{25}=x_1^{7}x_2^{6}x_3^{3}x_4^{4}x_5^{4}$, & $L_{26}=x_1^{7}x_2^{3}x_3^{4}x_4^{4}x_5^{6}$, & $L_{27}=x_1^{7}x_2^{3}x_3^{4}x_4^{6}x_5^{4}$, & $L_{28}=x_1^{7}x_2^{3}x_3^{6}x_4^{4}x_5^{4}$, \\
$L_{29}=x_1^{3}x_2^{4}x_3^{6}x_4^{6}x_5^{5}$, & $L_{30}=x_1^{3}x_2^{6}x_3^{4}x_4^{6}x_5^{5}$, & $L_{31}=x_1^{3}x_2^{6}x_3^{6}x_4^{4}x_5^{5}$, & $L_{32}=x_1^{3}x_2^{4}x_3^{6}x_4^{5}x_5^{6}$, \\
$L_{33}=x_1^{3}x_2^{6}x_3^{4}x_4^{5}x_5^{6}$, & $L_{34}=x_1^{3}x_2^{6}x_3^{6}x_4^{5}x_5^{4}$, & $L_{35}=x_1^{3}x_2^{4}x_3^{5}x_4^{6}x_5^{6}$, & $L_{36}=x_1^{3}x_2^{6}x_3^{5}x_4^{4}x_5^{6}$, \\
$L_{37}=x_1^{3}x_2^{6}x_3^{5}x_4^{6}x_5^{4}$, & $L_{38}=x_1^{3}x_2^{5}x_3^{4}x_4^{6}x_5^{6}$, & $L_{39}=x_1^{3}x_2^{5}x_3^{6}x_4^{4}x_5^{6}$, & $L_{40}=x_1^{3}x_2^{5}x_3^{6}x_4^{6}x_5^{4}.$
\end{tabular}%
\end{center}
It is not difficult to check that these monomials are inadmissible and so by Theorem \ref{dlKS}, $X$ is inadmissible.\\[1mm]
 Finally, if $\omega(X) = \omega'_{(7)}$ then $X = x_ix_jx_kx_{\ell}T^{2}$ with $1\leq i<j<k<\ell\leq 5$ and $T\in \mathscr C^{\otimes 5}_{14}.$ Then, by direct computations, we find that either $X = x_1^{3}x_2^{5}x_3^{5}x_4^{5}x_5^{14}$ or $X$ is of the form  $LT_1^{2^{r}},$ where $r$ is a suitable integer and $L$ is one of the following monomials:

\begin{center}
\begin{tabular}{lllr}
$L_{41}=x_1^{7}x_2^{2}x_3^{5}x_4^{5}x_5^{5}$, & $L_{42}=x_1^{3}x_2^{4}x_3^{7}x_4^{5}x_5^{5}$, & $L_{43}=x_1^{3}x_2^{4}x_3^{5}x_4^{7}x_5^{5}$, & \multicolumn{1}{l}{$L_{44}=x_1^{3}x_2^{4}x_3^{5}x_4^{5}x_5^{7}$,} \\
$L_{45}=x_1^{3}x_2^{7}x_3^{4}x_4^{5}x_5^{5}$, & $L_{46}=x_1^{3}x_2^{5}x_3^{4}x_4^{7}x_5^{5}$, & $L_{47}=x_1^{3}x_2^{5}x_3^{4}x_4^{5}x_5^{7}$, & \multicolumn{1}{l}{$L_{48}=x_1^{3}x_2^{7}x_3^{5}x_4^{4}x_5^{5}$,} \\
$L_{49}=x_1^{3}x_2^{5}x_3^{7}x_4^{4}x_5^{5}$, & $L_{50}=x_1^{3}x_2^{5}x_3^{5}x_4^{4}x_5^{7}$, & $L_{51}=x_1^{3}x_2^{7}x_3^{5}x_4^{5}x_5^{4}$, & \multicolumn{1}{l}{$L_{52}=x_1^{3}x_2^{5}x_3^{7}x_4^{5}x_5^{4}$,} \\
$L_{53}=x_1^{3}x_2^{5}x_3^{5}x_4^{7}x_5^{4}$, & $L_{54}=x_1^{7}x_2^{3}x_3^{4}x_4^{5}x_5^{5}$, & $L_{55}=x_1^{7}x_2^{3}x_3^{5}x_4^{4}x_5^{5}$, & \multicolumn{1}{l}{$L_{56}=x_1^{7}x_2^{3}x_3^{5}x_4^{5}x_5^{4}$,} \\
$L_{57}=x_1^{3}x_2^{6}x_3^{5}x_4^{5}x_5^{5}$, & $L_{58}=x_1^{3}x_2^{5}x_3^{6}x_4^{5}x_5^{5}$, & $L_{59}=x_1^{3}x_2^{5}x_3^{5}x_4^{6}x_5^{5}$. &  
\end{tabular}%
\end{center}

Using the Cartan formula, it follows that \\[1mm]
$$ \begin{array}{ll}
\medskip
 x_1^{3}x_2^{5}x_3^{5}x_4^{5}x_5^{14} =& Sq^{1}(x_1^{3}x_2^{3}x_3^{9}x_4^{3}x_5^{14} + x_1^{3}x_2^{3}x_3^{5}x_4^{3}x_5^{17} + x_1^{3}x_2^{3}x_3^{9}x_4^{5}x_5^{11} + x_1^{3}x_2^{3}x_3^{9}x_4^{9}x_5^{7})\\
&+ Sq^{2}(x_1^{5}x_2^{3}x_3^{5}x_4^{3}x_5^{14} + x_1^{5}x_2^{3}x_3^{6}x_4^{3}x_5^{13} + x_1^{5}x_2^{3}x_3^{6}x_4^{5}x_5^{11}\\
\medskip
&\qquad+ x_1^{5}x_2^{3}x_3^{9}x_4^{6}x_5^{7} + x_1^{5}x_2^{3}x_3^{6}x_4^{9}x_5^{7})\\
&+Sq^{4}(x_1^{3}x_2^{3}x_3^{5}x_4^{3}x_5^{14} + x_1^{3}x_2^{3}x_3^{6}x_4^{3}x_5^{13} + x_1^{3}x_2^{3}x_3^{6}x_4^{5}x_5^{11}\\
&\qquad + x_1^{3}x_2^{3}x_3^{9}x_4^{6}x_5^{7} + x_1^{3}x_2^{3}x_3^{6}x_4^{9}x_5^{7}) \mod (\mathbb P_{32}^{\otimes 5})^{< \omega'_{(7)}}
\end{array}$$
and so $X = x_1^{3}x_2^{5}x_3^{5}x_4^{5}x_5^{14}$ is $\omega'_{(7)}$-decomposable. Further, it is easy to verify that $L_i$ are inadmissible for $41\leq i\leq 59.$ Then, according to Theorem \ref{dlKS}, $X$ is inadmissible.\\[1mm]
Now, we have seen that $\mathscr C_{32}^{\otimes 5} = \bigcup_{1\leq j\leq 7}\mathscr C_{32}^{\otimes 5}\cap (\mathbb P_{32}^{\otimes 5})^{\leq \omega'_{(j)}}  =\bigcup_{1\leq j\leq 7} (\mathscr C_{32}^{\otimes 5})^{\omega'_{(j)}}.$ On the other hand, we put $Q^{\otimes 5}_{32}(\omega'_{(j)}) := \langle [X]:\, X\in  (\mathscr C_{32}^{\otimes 5})^{\omega'_{(j)}}\rangle\subset Q^{\otimes 5}_{32}.$ Then, we have an isomorphism $$Q^{\otimes 5}_{32}(\omega'_{(j)})\to (Q^{\otimes 5}_{32})^{\omega'_{(j)}},\ [X]\longmapsto [X]_{\omega'_{(j)}}.$$ Combining this with the above computations gives $$Q^{\otimes 5}_{32} = Q^{\otimes 5}_{32}(\omega'_{(1)}) \bigoplus Q^{\otimes 5}_{32}(\omega'_{(2)}) \bigoplus Q^{\otimes 5}_{32}(\omega'_{(3)})  \cong (Q_{32}^{\otimes 5})^{\omega'_{(1)}}\bigoplus (Q_{32}^{\otimes 5})^{\omega'_{(2)}}\bigoplus (Q_{32}^{\otimes 5})^{\omega'_{(3)}}.$$

\medskip

{\it Step 3.} We now prove that $Q^{\otimes 5}_{32}$ is $1004$-dimensional. It is suffice to determine that
$$ \dim (Q_{32}^{\otimes 5})^{(\omega'_{(j)})^{>0}} = \left\{\begin{array}{ll}
9,& \mbox{if } j = 1,\\
310,& \mbox{if } j = 2,\\
320,& \mbox{if } j = 3.
\end{array}\right.$$
Indeed, for each $1\leq l\leq 5,$ let us consider the following sets:
$$ \begin{array}{ll}
\medskip
B(31) &= \{x_l\mathsf{q}_{(l,\,5)}(X):\ X\in (\mathscr {C}^{\otimes 4}_{31})^{>0},\} ,\\
\medskip
B(29) &= \{x_l^{3}\mathsf{q}_{(l,\,5)}(X):\ X\in (\mathscr {C}^{\otimes 4}_{29})^{>0}\},\\
\medskip
B(25) &= \{x_l^{7}\mathsf{q}_{(l,\,5)}(X):\ X\in (\mathscr {C}^{\otimes 4}_{25})^{>0}\},\\
\medskip
B(17) &= \{x_l^{15}\mathsf{q}_{(l,\,5)}(X):\ X\in (\mathscr {C}^{\otimes 4}_{17})^{>0}\},\\
\medskip
B(1) &= \{x_l^{31}\mathsf{q}_{(l,\,5)}(X):\ X\in (\mathscr {C}^{\otimes 4}_{1})^{>0}\}.
\end{array}$$
For each $1\leq j\leq 3,$ writing $B(\omega'_{(j)}) = (\mathbb P^{\otimes 5})^{(\omega'_{(j)})^{>0}}\cap (B(31)\cup B(29)\cup B(25)\cup B(17)\cup B(1)),$ in which $(\mathbb P^{\otimes 5})^{(\omega'_{(j)})^{>0}} := \langle \{X\in (\mathbb P^{\otimes 5})^{>0}:\ \omega(X)=\omega'_{(j)}\} \rangle$ By direct computations, one can easily obtain
$$ \begin{array}{ll}
\medskip
 B(\omega'_{(1)}) &= \{Z_j|\ 1\leq j\leq 4\},\\
\medskip
 B(\omega'_{(2)}) &= \{Z'_j|\ 1\leq j\leq 196\},\\
\medskip
 B(\omega'_{(3)}) &= \{Z''_j|\ 1\leq j\leq 320\}.
\end{array}$$
Notice that $\langle [B(\omega'_{(3)})]_{\omega'_{(3)}} \rangle$ is a $\mathbb Z_2$-vector subspace of $(Q_{32}^{\otimes 5})^{(\omega'_{(j)})^{>0}}.$ So, by Theorem \ref{dlMU}, we conclude that $[B(\omega'_{(3)})]_{\omega'_{(3)}}$ is an basis of $(Q_{32}^{\otimes 5})^{(\omega'_{(j)})^{>0}}.$ This means that $(Q_{32}^{\otimes 5})^{(\omega'_{(j)})^{>0}}$ is $320$-dimensional. Now, we put $C(\omega'_{(1)}) = \{Z_j|\ 5\leq j\leq 9\},$ and $C(\omega'_{(2)}) = \{Z'_j|\ 197\leq j\leq 310\},$ where the monomials $Z_j,\, Z'_j,\, Z''_j$ are described as in the Subsects.\ref{s32_1}, \ref{s32_2} and \ref{s32_3} of the Appendix. Then, the sets $[D(\omega'_{(1)}):=B(\omega'_{(1)})\cup C(\omega'_{(1)})]_{\omega'_{(1)}}$ and $[D(\omega'_{(2)}):=B(\omega'_{(2)})\cup C(\omega'_{(2)})]_{\omega'_{(2)}}$ are the bases of $(Q_{32}^{\otimes 5})^{(\omega'_{(1)})^{>0}}$ and $(Q_{32}^{\otimes 5})^{(\omega'_{(2)})^{>0}}$ respectively. Indeed, let $\omega:= (1,1,1, 1)$ be the weight vector of degree $15$ and let $X$ be an admissible monomial in $(\mathscr C_{32}^{\otimes 5})^{(\omega'_{(1)})^{>0}}.$ Then by Theorem \ref{dlKS}, $X$ has the form $x_ix_jY^2$ with $1\leq i<j\leq 5,$ and $Y\in (\mathscr C_{15}^{\otimes 5})^{\omega}.$ By direct calculations, we find that if $Y_1\in (\mathscr C^{\otimes 5}_{15})^{\omega}$ and the monomial $T:=x_ix_jY_1^2\neq Z_j,$ for all $j,\ 1\leq j\leq 310,$ then $T$ is of the form $LY_2^{16},$ where $L$ is one of the following inadmissible monomials: $x_1^{3}x_2^{4}x_3x_4^{8},\ x_1^{3}x_2^{4}x_3^{8}x_4,\ x_1^{3}x_2^{4}x_3^{8}x_5.$ By Theorem \ref{dlKS}, $T$ is inadmissible. Since $X = x_ix_jY^2$ is admissible, we have $X = Z_j$ for some $j,\ 1\leq j\leq 9.$ Now, suppose that there is a linear relation $\mathcal S = \sum_{1\leq j\leq 9}\gamma_jZ_j\equiv_{\omega'_{(1)}} 0$ where $\gamma_j\in \mathbb Z_2$ for every $j.$ A simple computation using the relations $\mathsf{p}_{(1, k)}(\mathcal S)\equiv_{\omega'_{(1)}} 0,\ 2\leq k\leq 5,$ one gets $\gamma_j= 0,\, j = 1,2, \ldots, 9,$ and therefore $(Q_{32}^{\otimes 5})^{(\omega'_{(1)})^{>0}}$ is $9$-dimensional.

Finally, to prove that the set $[D(\omega'_{(2)})]_{\omega'_{(2)}}$ is a basis of $(Q_{32}^{\otimes 5})^{(\omega'_{(2)})^{>0}},$  we first need the following.

\begin{rems}\label{nx32}
The following monomials are strictly inadmissible:

\begin{center}
\begin{tabular}{llll}
$x_1^{3}x_2^{12}x_3x_4x_5^{15}$, & $x_1^{3}x_2^{12}x_3x_4^{15}x_5$, & $x_1^{3}x_2^{12}x_3^{15}x_4x_5$, & $x_1^{3}x_2^{15}x_3^{12}x_4x_5$, \\
$x_1^{15}x_2^{3}x_3^{12}x_4x_5$, & $x_1^{7}x_2^{11}x_3^{12}x_4x_5$, & $x_1^{3}x_2^{12}x_3x_4^{3}x_5^{13}$, & $x_1^{3}x_2^{12}x_3^{3}x_4x_5^{13}$, \\
$x_1^{3}x_2^{12}x_3^{3}x_4^{13}x_5$, & $x_1^{3}x_2^{4}x_3x_4^{9}x_5^{15}$, & $x_1^{3}x_2^{4}x_3x_4^{15}x_5^{9}$, & $x_1^{3}x_2^{4}x_3^{9}x_4x_5^{15}$, \\
$x_1^{3}x_2^{4}x_3^{9}x_4^{15}x_5$, & $x_1^{3}x_2^{4}x_3^{15}x_4x_5^{9}$, & $x_1^{3}x_2^{4}x_3^{15}x_4^{9}x_5$, & $x_1^{3}x_2^{15}x_3^{4}x_4x_5^{9}$, \\
$x_1^{3}x_2^{15}x_3^{4}x_4^{9}x_5$, & $x_1^{15}x_2^{3}x_3^{4}x_4x_5^{9}$, & $x_1^{15}x_2^{3}x_3^{4}x_4^{9}x_5$, & $x_1^{3}x_2^{5}x_3^{14}x_4x_5^{9}$, \\
$x_1^{3}x_2^{5}x_3^{14}x_4^{9}x_5$, & $x_1^{7}x_2^{11}x_3^{4}x_4x_5^{9}$, & $x_1^{7}x_2^{11}x_3^{4}x_4^{9}x_5$, & $x_1^{3}x_2^{5}x_3^{8}x_4^{7}x_5^{9}$.
\end{tabular}%
\end{center}

For simplicity, we prove this statement for the monomials $x_1^{3}x_2^{5}x_3^{14}x_4x_5^{9}$ and $x_1^{3}x_2^{5}x_3^{8}x_4^{7}x_5^{9}.$ The others can be proved by similar techniques. Indeed, by a direct computation using Cartan's formula, we get
$$ \begin{array}{ll}
x_1^{3}x_2^{5}x_3^{14}x_4x_5^{9} &=Sq^{1}\big(x_1^{3}x_2^{3}x_3^{7}x_4x_5^{17}+
x_1^{3}x_2^{3}x_3^{11}x_4x_5^{13}+
x_1^{3}x_2^{3}x_3^{13}x_4x_5^{11}+
\medskip
x_1^{3}x_2^{3}x_3^{17}x_4x_5^{7}\big)\\
&\quad + Sq^{2}\big(x_1^{2}x_2^{3}x_3^{11}x_4x_5^{13}+
x_1^{2}x_2^{3}x_3^{13}x_4x_5^{11}+
x_1^{5}x_2^{3}x_3^{7}x_4x_5^{14}+
\medskip
x_1^{5}x_2^{3}x_3^{7}x_4^{2}x_5^{13}\\
&\qquad+
x_1^{5}x_2^{3}x_3^{7}x_4^{4}x_5^{11}+
x_1^{5}x_2^{3}x_3^{7}x_4^{8}x_5^{7}+
x_1^{5}x_2^{3}x_3^{11}x_4^{4}x_5^{7}+
\medskip
x_1^{5}x_2^{3}x_3^{13}x_4^{2}x_5^{7}\\
&\qquad+
\medskip
x_1^{5}x_2^{3}x_3^{14}x_4x_5^{7}\big)\\
&\quad + Sq^{4}\big(x_1^{3}x_2^{3}x_3^{7}x_4x_5^{14}+
x_1^{3}x_2^{3}x_3^{7}x_4^{2}x_5^{13}+
x_1^{3}x_2^{3}x_3^{7}x_4^{4}x_5^{11}+
\medskip
x_1^{3}x_2^{3}x_3^{7}x_4^{8}x_5^{7}\\
&\qquad+
x_1^{3}x_2^{3}x_3^{11}x_4^{4}x_5^{7}+
x_1^{3}x_2^{3}x_3^{13}x_4^{2}x_5^{7}+
\medskip
x_1^{3}x_2^{3}x_3^{14}x_4x_5^{7}\big)\\
&\quad +x_1^{2}x_2^{5}x_3^{11}x_4x_5^{13}+
x_1^{2}x_2^{5}x_3^{13}x_4x_5^{11}+
x_1^{3}x_2^{3}x_3^{12}x_4x_5^{13}+
\medskip
x_1^{3}x_2^{3}x_3^{13}x_4x_5^{12}\\
&\quad+
x_1^{3}x_2^{4}x_3^{11}x_4x_5^{13}+
x_1^{3}x_2^{4}x_3^{13}x_4x_5^{11}+
x_1^{3}x_2^{5}x_3^{7}x_4^{8}x_5^{9}+
\medskip
x_1^{3}x_2^{5}x_3^{9}x_4x_5^{14}\\
&\quad+
x_1^{3}x_2^{5}x_3^{9}x_4^{2}x_5^{13}+
x_1^{3}x_2^{5}x_3^{9}x_4^{4}x_5^{11}+
x_1^{3}x_2^{5}x_3^{9}x_4^{8}x_5^{7}+
\medskip
x_1^{3}x_2^{5}x_3^{11}x_4^{4}x_5^{9}\\
&\quad+
\medskip
x_1^{3}x_2^{5}x_3^{13}x_4^{2}x_5^{9} \mod (\mathbb P_{32}^{\otimes 5})^{< \omega'_{(2)}},\\
x_1^{3}x_2^{5}x_3^{8}x_4^{7}x_5^{9}&=Sq^{1}\big(x_1^{3}x_2^{3}x_3x_4^{7}x_5^{17}+
x_1^{3}x_2^{3}x_3x_4^{11}x_5^{13}+
x_1^{3}x_2^{3}x_3x_4^{13}x_5^{11}+
\medskip
x_1^{3}x_2^{3}x_3x_4^{17}x_5^{7}\big)\\
&\quad + Sq^{2}\big(x_1^{2}x_2^{3}x_3x_4^{11}x_5^{13}+
x_1^{2}x_2^{3}x_3x_4^{13}x_5^{11}+
x_1^{5}x_2^{3}x_3x_4^{7}x_5^{14}+
\medskip
x_1^{5}x_2^{3}x_3x_4^{14}x_5^{7}\\
&\qquad + 
x_1^{5}x_2^{3}x_3^{2}x_4^{7}x_5^{13}+
x_1^{5}x_2^{3}x_3^{2}x_4^{9}x_5^{11}+
x_1^{5}x_2^{3}x_3^{2}x_4^{11}x_5^{9}+
\medskip
x_1^{5}x_2^{3}x_3^{2}x_4^{13}x_5^{7}\\
&\qquad + 
x_1^{5}x_2^{3}x_3^{4}x_4^{7}x_5^{11}+
x_1^{5}x_2^{3}x_3^{4}x_4^{11}x_5^{7}+
\medskip
x_1^{5}x_2^{3}x_3^{8}x_4^{7}x_5^{7}\big)\\
\end{array}$$

\newpage
$$ \begin{array}{ll}
&\quad + Sq^{4}\big(x_1^{3}x_2^{3}x_3x_4^{7}x_5^{14}+
x_1^{3}x_2^{3}x_3x_4^{14}x_5^{7}+
x_1^{3}x_2^{3}x_3^{2}x_4^{7}x_5^{13}+
\medskip
x_1^{3}x_2^{3}x_3^{2}x_4^{9}x_5^{11}\\
&\qquad + 
x_1^{3}x_2^{3}x_3^{2}x_4^{11}x_5^{9}+
x_1^{3}x_2^{3}x_3^{2}x_4^{13}x_5^{7}+
x_1^{3}x_2^{3}x_3^{4}x_4^{7}x_5^{11}+
\medskip
x_1^{3}x_2^{3}x_3^{4}x_4^{11}x_5^{7}\\
&\qquad + 
x_1^{3}x_2^{3}x_3^{8}x_4^{7}x_5^{7}+
\medskip
x_1^{3}x_2^{9}x_3^{4}x_4^{5}x_5^{7}\big) +Sq^{8}\big(x_1^{3}x_2^{5}x_3^{4}x_4^{5}x_5^{7}\big)\\
&\quad + x_1^{2}x_2^{5}x_3x_4^{11}x_5^{13}+
x_1^{2}x_2^{5}x_3x_4^{13}x_5^{11}+
\medskip
x_1^{3}x_2^{3}x_3x_4^{12}x_5^{13}\\
&\quad + 
x_1^{3}x_2^{3}x_3x_4^{13}x_5^{12}+
x_1^{3}x_2^{3}x_3^{4}x_4^{9}x_5^{13}+
\medskip
x_1^{3}x_2^{3}x_3^{4}x_4^{13}x_5^{9}\\
&\quad + 
x_1^{3}x_2^{4}x_3x_4^{11}x_5^{13}+
x_1^{3}x_2^{4}x_3x_4^{13}x_5^{11}+
\medskip
x_1^{3}x_2^{5}x_3x_4^{9}x_5^{14}\\
&\quad + 
x_1^{3}x_2^{5}x_3x_4^{14}x_5^{9}+
x_1^{3}x_2^{5}x_3^{4}x_4^{9}x_5^{11}+
x_1^{3}x_2^{5}x_3^{8}x_4^{5}x_5^{11} \mod (\mathbb P_{32}^{\otimes 5})^{< \omega'_{(2)}}.
\end{array}$$
The above equalities show that $x_1^{3}x_2^{5}x_3^{14}x_4x_5^{9}$ and $x_1^{3}x_2^{5}x_3^{8}x_4^{7}x_5^{9}$ are strictly inadmissible. 
\end{rems}

Next, suppose $X\in (\mathscr C_{32}^{\otimes 5})^{(\omega'_{(2)})^{>0}}.$ Then,  using Theorem \ref{dlKS}, we have $X = x_ax_bx_cx_dL^2$ where $1\leq a<b<c<d\leq 5,$ and $L$ is an admissible monomial in $(\mathbb P^{\otimes 5}_{14})^{\leq \omega}$ with $\omega = (2,2,2).$ A direct computation shows that if $L_1\in (\mathscr C^{\otimes 5}_{14})^{\omega}$ and the monomial $M:=x_ax_bx_cx_dL_1^2\neq Z'_j,$ for all $j,\ 1\leq j\leq 310,$ then either $M$ is of the form $x_1^{3}x_2^{2}x_3x_4x_5L_2^{2^{k}},$ where $k$ is a suitable integer and $x_1^{3}x_2^{2}x_3x_4x_5$ is inadmissible or $M$ is one of the inadmissible monomials as shown in Remark \ref{nx32}. So, according to Theorem \ref{dlKS}, $M$ is inadmissible. Thus, there exists $j$ for $1\leq j\leq 310$ such that $X = Z'_j.$ Now, we will prove that the set $[D(\omega'_{(2)})]_{\omega'_{(2)}}$ is linearly independent in  $(Q_{32}^{\otimes 5})^{(\omega'_{(2)})^{>0}}.$ Indeed, suppose there is a linear relation  $\mathcal S = \sum_{1\leq j\leq 310}\beta_jZ'_j\equiv_{\omega'_{(2)}} 0$  where $\beta_j\in \mathbb Z_2$ for all $j.$ Acting the homomorphisms $\mathsf{p}_{(l, \mathscr L)}: \mathbb P_{32}^{\otimes 5}\to \mathbb P_{32}^{\otimes 5}$ for $\ell(\mathscr L) \leq 2$ on the both sides of $\mathcal S$, we immediately obtain $\gamma_j = 0$ for any $j,\, 1\leq j\leq 310.$ This completes the proof of the theorem.

\begin{rems}\label{nx2}
By a simple computation, we have seen that $\widetilde{\Phi^{0}}((\mathscr C^{\otimes 4}_{32})^{\omega'_{(j)}})\subset  (\mathscr C_{32}^{\otimes 5})^{(\omega'_{(j)})^{0}}$ for $1\leq j\leq 3$ and that $\widetilde{\Phi^{>0}}((\mathscr C^{\otimes 4}_{32})^{\omega'_{(j)}})\subset B(\omega'_{(j)}$ for $j = 1,\, 3$ and $\widetilde{\Phi^{>0}}((\mathscr C^{\otimes 4}_{32})^{\omega'_{(2)}}) \cap B(\omega'_{(2)}\neq \emptyset.$ Therefore, from the proof of Theorem \ref{dlc4}, we have 
$$ \begin{array}{ll}
\medskip
 \widetilde{\Phi^{>0}}((\mathscr C^{\otimes 4}_{32})^{\omega'_{(j)}})&\subset (\mathscr C_{32}^{\otimes 5})^{(\omega'_{(j)})^{>0}}\ \mbox{for $j = 1,\, 3,$}\\
 (\mathscr C_{32}^{\otimes 5})^{(\omega'_{(2)})^{>0}} &= \widetilde{\Phi^{>0}}((\mathscr C^{\otimes 4}_{32})^{\omega'_{(2)}}) \cup B(\omega'_{(2)})\cup \{Z'_j|\, 285\leq j\leq 310 \}.
\end{array}$$ 
Thus, Conjecture \ref{gtSum} holds also in the 5-variable case and the degree $32.$
\end{rems}

\subsection{Proof of Theorem \ref{dlc4}}

Obviously $n_d = 5(2^{d}-1) + 21.2^{d}.$ In \cite{D.P4}, we have shown that $Q^{\otimes 5}_{n_d}$ is $1894$-dimensional for arbitrary $d > 0.$   Let us now consider $\zeta = 1 < d.$ It is easy to check that 
$$ \mu(n_{\zeta}) = 3 = \alpha(n_{\zeta} +\mu(n_{\zeta})).$$ So, invoking Theorem \ref{dlP}, we may conclude that
$$ \dim Q^{\otimes 6}_{5(2^{d+4}-1) + n_1.2^{d+4}} = (2^{6} - 1)\times \dim Q^{\otimes 5}_{n_d} = 63\times 1894 = 119322,$$
for any positive integer $d.$  The theorem is proved. 

\subsection{Proof of Theorem \ref{dlc5}}

Before coming to the proof of the theorem, we give the following definition: Let $\mathbb Z_2^{5}$ be a rank $5$ elementary abelian 2-group, which views as a $5$-dimensional $\mathbb Z_2$-vector space. As well known, $H^*(\mathbb Z_2^{5}, \mathbb Z_2)\cong S((\mathbb Z_2^{5})^{*}),$ the symmetric algebra over the dual space $(\mathbb Z_2^{5})^{*} = H^{1}(\mathbb Z_2^{5}, \mathbb Z_2).$ Pick $x_1, \ldots, x_5$ to be a basis of $H^{1}(\mathbb Z_2^{5}, \mathbb Z_2).$ Then, one has an isomorphism $H^{*}(\mathbb Z_2^{5}, \mathbb Z_2) \cong \mathbb P^{\otimes 5} = \mathbb Z_2[x_1, \ldots, x_5].$ One should note that $\mathbb Z_2^{5}\cong \langle x_1, \ldots, x_5\rangle \subset \mathbb P^{\otimes 5}.$ For $1\leq d\leq 5,$  we define the $\mathbb Z_2$-linear map $\sigma_d: \mathbb Z_2^{5}\to \mathbb Z_2^{5}$ by setting  $ \sigma_d(x_d) = x_{d+1},\;\sigma_d(x_{d+1}) = x_d\;\sigma_d(x_i) = x_i$ for $i\neq d, d +1,\; 1\leq d\leq 4,$ and $ \sigma_5(x_1) = x_1 + x_2,\; \sigma_5(x_i) = x_i$ for $2\leq i \leq 5.$

\begin{rems} Denote by $\Sigma_5$ the symmetric group of degree $5.$ Then, $\Sigma_5$ is generated by the ones associated with $\sigma_1,\, \ldots, \sigma_4.$ For each permutation in $\Sigma_5$, consider corresponding permutation matrix; these form a group of matrices isomorphic to $\Sigma_5.$ Indeed, consider the following map $\Delta: \Sigma_5\to \mathcal P_{5\times 5},$ where the latter is the set of permutation matrices of order $5.$ This map is defined as follows: given $\sigma\in \Sigma_5,$ the $i$-th column of $\Delta(\sigma)$ is the column vector with a $1$ in the $\rho(i)$-th position, and $0$ elsewhere. It is easy to see that $\Delta(\rho)$ is indeed a permutation matrix, since a $1$ occurs in any position if and only if that position is described by $(\rho(i), i),$ for any $1\leq i\leq 5.$ The map $\Delta$ is clearly multiplicative. (It is to be noted that because these are matrices, it is enough to show that each corresponding entry is equal. So let us take the entry $(i, j)$ of each matrix.) Then, $\Delta(\rho\circ\rho')_{ij}  =1$ if and only if $i = \rho\circ \rho'(j).$ Note also that by ordinary matrix multiplication, one has $(\Delta(\rho)\Delta(\rho'))_{ij} = \sum_{1\leq k\leq 5}\Delta(\rho)_{ik}\Delta(\rho')_{kj}.$ Now, we know that $\Delta(\rho)_{ik} = 1$ only when $i = \rho(k).$ Similarly, $\Delta(\rho')_{kj} = 1$ only when $k = \rho'(j).$ Hence, their product is one precisely when both of these happen: $i = \rho(k),$ and $k = \rho'(j).$ If both these do not happen simultaneously, then whenever one of $\Delta(\rho)_{ik},\, \Delta(\rho')_{kj}$ is one of the other will be zero, so the whole sum will be zero. However, this is the same as saying that the sum is one exactly when $i = \rho\circ\rho'(j).$ This description matches with the description given for $\Delta(\rho\circ \rho')_{ij}$ given earlier. Hence, entry by entry these matrices are the same. Therefore the matrices are the same, and hence $\Delta$ is a homomorphism between the two spaces, an isomorphism as it has trivial kernel and the sets are of the same cardinality. Thus, $GL_5\cong GL(\mathbb Z_2^{5}),$ and $GL_5$ is generated by the matrices associated with $\sigma_1, \ldots, \sigma_5.$ Let $X = x_1^{a_1}x_2^{a_2}\ldots x_5^{a_5}$ be a monomial in $\mathbb P^{\otimes 5}.$ Then, the weight vector $\omega(X)$ is invariant under the permutation of the generators $x_j,\ j = 1, 2, \ldots, 5;$ hence $(Q^{\otimes 5})^{\omega(X)}$ also has a $\Sigma_5$-module structure. We see that the linear map $\sigma_d$ induces a homomorphism of $\mathscr A_2$-algebras which is also denoted by $\sigma_d: \mathbb P^{\otimes 5}\to \mathbb P^{\otimes 5}.$ So, a class $[X]_{\omega}\in (Q^{\otimes 5})^{\omega}$ is an $GL_5$-invariant if and only if $\sigma_d(X) \equiv_{\omega} X$ for $1\leq d\leq 5.$ If  $\sigma_d(X) \equiv_{\omega} X$ for $1\leq d\leq 4,$ then $[X]_{\omega}$ is an $\Sigma_5$-invariant. 
\end{rems}

We are now ready to prove Theorem \ref{dlc5}. In what follows: Let $\omega$ be a weight vector of degree $n$ and let $F_1, F_2, \ldots, F_s$ be the monomials in $(\mathbb P^{\otimes 5}_n)^{\omega}.$ For a subgroup $G$ of $GL_5(\mathbb Z_2),$ denote by $G(F_1; F_2; \ldots, F_s)$ the $G$-submodule of $(Q^{\otimes 5}_n)^{\omega}$ generated by the set $\{[F_i]_{\omega}:\, 1\leq i\leq s\}.$ We also note that if $\omega$ is a weight vector of a minimal spike, then $[F_i]_{\omega} = [F_i]$ for all $i.$

\begin{proof}[{\it Proof of Case n = 14}]
Firstly, we have the following remarks.

\begin{rems}\label{nx14}
According to Ly-Tin \cite{L.T}, if $X$ is an admissible monomial in $\mathbb P^{\otimes 5}_{14},$ then $\omega(X)$ is one of the following sequences: $\widetilde{\omega}_{(1)}: =(2,2,2),$ $\widetilde{\omega}_{(2)}: =(2,4,1)$ and $\widetilde{\omega}_{(3)}: =(4,3,1).$ A direct computation using a result in Sum \cite{N.S1}, we have $$(\mathscr C_{14}^{\otimes 5})^{0} = \bigcup_{1\leq l\leq 5}\mathsf{q}_{(l,\,t)}(\mathscr C_{14}^{\otimes 4}) = (\mathscr C_{14}^{\otimes 5})^{(\widetilde{\omega}_{(1)})^{0}}\cup (\mathscr C_{14}^{\otimes 5})^{(\widetilde{\omega}_{(3)})^{0}},$$
where $|(\mathscr C_{14}^{\otimes 5})^{(\widetilde{\omega}_{(1)})^{0}}| = 115$ and $|(\mathscr C_{14}^{\otimes 5})^{(\widetilde{\omega}_{(3)})^{0}}| = 75.$  For each $1\leq l\leq 5,$ letting
$$ \begin{array}{ll}
\medskip
E(13) &= \{x_l\mathsf{q}_{(l,\,5)}(X):\ X\in (\mathscr {C}^{\otimes 4}_{13})^{>0},\} ,\\
\medskip
E(11) &= \{x_l^{3}\mathsf{q}_{(l,\,5)}(X):\ X\in (\mathscr {C}^{\otimes 4}_{11})^{>0}\},\\
\medskip
E(7) &= \{x_l^{7}\mathsf{q}_{(l,\,5)}(X):\ X\in (\mathscr {C}^{\otimes 4}_{7})^{>0}\},\\
\end{array}$$
and denote $E(\widetilde{\omega}_{(j)}) = (\mathbb P^{\otimes 5})^{(\widetilde{\omega}_{(j)})^{>0}} \cap (E(13)\cup E(11)\cup E(7)),$ with $$(\mathbb P^{\otimes 5})^{(\widetilde{\omega}_{(j)})^{>0}} := \langle \{X\in (\mathbb P_{14}^{\otimes 5})^{>0}:\ \omega(X)=\widetilde{\omega}_{(j)}\} \rangle$$
for $j = 2,\, 3.$ Then, by a simple computation, it follows that
$$ \begin{array}{ll}  
\medskip
(\mathscr C^{\otimes 5}_{14})^{(\widetilde{\omega}_{(1)})^{>0}} &=   \widetilde{\Phi^{>0}}((\mathscr C^{\otimes 4}_{14})^{\widetilde{\omega}_{(1)}})\cup \{x_1x_2^{2}x_3^{4}x_4^{3}x_5^{4},\, x_1^{3}x_2^{4}x_3x_4^{2}x_5^{4}\}\\
\medskip
(\mathscr C^{\otimes 5}_{14})^{(\widetilde{\omega}_{(2)})^{>0}}&= E(\widetilde{\omega}_{(2)}) \cup D,\\
(\mathscr C^{\otimes 5}_{14})^{(\widetilde{\omega}_{(3)})^{>0}}&= E(\widetilde{\omega}_{(3)}),
\end{array}$$
where $D$ is the set of the following admissible monomials:

\begin{center}
\begin{tabular}{llrr}
$x_1x_2^{2}x_3^{2}x_4^{3}x_5^{6}$, & $x_1x_2^{2}x_3^{3}x_4^{2}x_5^{6}$, & \multicolumn{1}{l}{$x_1x_2^{2}x_3^{3}x_4^{6}x_5^{2}$,} & \multicolumn{1}{l}{$x_1x_2^{3}x_3^{2}x_4^{2}x_5^{6}$,} \\
$x_1x_2^{3}x_3^{2}x_4^{6}x_5^{2}$, & $x_1x_2^{3}x_3^{6}x_4^{2}x_5^{2}$, & \multicolumn{1}{l}{$x_1^{3}x_2x_3^{2}x_4^{2}x_5^{6}$,} & \multicolumn{1}{l}{$x_1^{3}x_2x_3^{2}x_4^{6}x_5^{2}$,} \\
$x_1^{3}x_2x_3^{6}x_4^{2}x_5^{2}$, & $x_1^{3}x_2^{5}x_3^{2}x_4^{2}x_5^{2}$. &       &  
\end{tabular}%
\end{center}
So, we have
$$ \begin{array}{ll}
\medskip
 (\mathscr C^{\otimes 5}_{14})^{\widetilde{\omega}_{(1)}} &= \{W_j|\, 1\leq j\leq 130\},\\
\medskip
 (\mathscr C^{\otimes 5}_{14})^{\widetilde{\omega}_{(2)}} &= (\mathscr C^{\otimes 5}_{14})^{(\widetilde{\omega}_{(2)})^{>0}}=  \{W_j|\, 131\leq j\leq 145\},\\
 (\mathscr C^{\otimes 5}_{14})^{\widetilde{\omega}_{(3)}} &= \{W_j|\, 146\leq j\leq 320\},\\
\end{array}$$
where the monomials $W_j$ for $1\leq j\leq 320,$ are determined as in Subsects.\ref{s14_1}, \ref{s14_2} and \ref{s14_3} of Appendix. Therefore, we have an isomorphism $$Q^{\otimes 5}_{14}\cong (Q^{\otimes 5}_{14})^{\widetilde{\omega}_{(1)}}\bigoplus (Q^{\otimes 5}_{14})^{\widetilde{\omega}_{(2)}}\bigoplus (Q^{\otimes 5}_{14})^{\widetilde{\omega}_{(3)}},$$
where $$ \dim  (Q^{\otimes 5}_{14})^{\widetilde{\omega}_{(j)}} = \left\{\begin{array}{ll}
130&\mbox{if $j = 1$},\\
15&\mbox{if $j = 2$},\\
175&\mbox{if $j = 3$},
\end{array}\right.$$
and so $Q^{\otimes 5}_{14}$ has dimension $320.$ This result has also been computed by Ly-Tin \cite{L.T}. However, our approach above is much less computational. Further, we observe that $\widetilde{\Phi}_*((\mathscr C^{\otimes 4}_{14})^{\widetilde{\omega}_{(j)}})\subset  (\mathscr C^{\otimes 5}_{14})^{(\widetilde{\omega}_{(j)})}$ for $j = 1,\, 3$ and $\widetilde{\Phi}_*((\mathscr C^{\otimes 4}_{14})^{\widetilde{\omega}_{(2)}}) = \emptyset$ since $ (\mathscr C^{\otimes 4}_{14})^{\widetilde{\omega}_{(2)}} = \emptyset$ and so Conjecture \ref{gtSum} satisfies for $m = 5$ and the degree $14.$ 
\end{rems}

Now, to prove that $[P_{\mathscr A_2}((\mathbb P_{14}^{\otimes 5})^{*})]_{GL_5}$ is 1-dimensional, we need determine the spaces of $GL_5$-invariants $[(Q^{\otimes 5}_{14})^{\widetilde{\omega}_{(j)}}]^{GL_5}$ for $1\leq j\leq 3.$ We obtain the following.

\begin{propo}\label{md14}
We have
$$ [(Q^{\otimes 5}_{14})^{\widetilde{\omega}_{(j)}}]^{GL_5} = \left\{\begin{array}{ll}
\langle [\zeta] \rangle&\mbox{if $j = 1$},\\
0&\mbox{if $j = 2,\, 3$},
\end{array}\right.$$
where $$ \begin{array}{ll}
\medskip
\zeta &= W_{51} + W_{53} + W_{55} + W_{56} + W_{57}+W_{111} + W_{112} + W_{113} + W_{114} + W_{115}\\
\medskip
&\quad+ W_{116} + W_{117} + W_{118} + W_{119} + W_{115} + W_{121} + W_{122} + W_{124} + W_{125}\\
&\quad + W_{127} + W_{128} + W_{129} + W_{130}.
\end{array}$$
Consequently, $\dim [P_{\mathscr A_2}((\mathbb P_{14}^{\otimes 5})^{*})]_{GL_5}\leq 1.$
\end{propo}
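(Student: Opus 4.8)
The plan is to compute each of the three invariant spaces $[(Q^{\otimes 5}_{14})^{\widetilde{\omega}_{(j)}}]^{GL_5}$ directly, by linear algebra over $\mathbb{Z}_2$ on the explicit monomial bases $[(\mathscr{C}^{\otimes 5}_{14})^{\widetilde{\omega}_{(j)}}]_{\widetilde{\omega}_{(j)}}$ recorded in Remark \ref{nx14}, using the reduction of $GL_5$-invariance to invariance under the generators $\sigma_1,\dots,\sigma_5$ from the preceding remark. For a fixed $j$ I would write a general class $f=\sum_{X}\gamma_X[X]_{\widetilde{\omega}_{(j)}}$, the sum over the admissible monomials $X$ of weight $\widetilde{\omega}_{(j)}$ with $\gamma_X\in\mathbb{Z}_2$, and impose $\sigma_d(f)\equiv_{\widetilde{\omega}_{(j)}}f$ for $d=1,\dots,5$. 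Once every monomial $\sigma_d(X)$ is rewritten as a $\mathbb{Z}_2$-combination of admissible monomials modulo $\overline{\mathscr{A}_2}\mathbb{P}^{\otimes 5}_{14}\cap(\mathbb{P}^{\otimes 5}_{14})^{\leq\widetilde{\omega}_{(j)}}+(\mathbb{P}^{\otimes 5}_{14})^{<\widetilde{\omega}_{(j)}}$, each condition becomes a homogeneous linear system in the $\gamma_X$, and solving the combined system yields the stated answer. (For $j=1$ one may work in $Q^{\otimes 5}_{14}$ itself, since $\widetilde{\omega}_{(1)}$ is the weight of the minimal spike $x_1^7x_2^7$, so $[F]_{\widetilde{\omega}_{(1)}}=[F]$.)

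In practice I would carry this out in two stages. First impose invariance only under $\sigma_1,\dots,\sigma_4$, i.e. determine the $\Sigma_5$-invariant subspace: here each $\sigma_d$ merely transposes $x_d\leftrightarrow x_{d+1}$ and so permutes the monomials, the only subtlety being that a permuted admissible monomial need not be admissible and must be reduced by Theorem \ref{dlKS} together with the strictly-inadmissible relations already produced in the proof of Remark \ref{nx14} (and many of those in Lemmas \ref{bd1}, \ref{bd2} and Remark \ref{nx32}, which reappear after deleting a square factor). For $j=2$ the basis $\{W_j\mid 131\le j\le145\}$ has only $15$ elements and a short computation shows the $\Sigma_5$-invariant subspace is already $0$; hence $[(Q^{\otimes 5}_{14})^{\widetilde{\omega}_{(2)}}]^{GL_5}=0$. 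For $j=3$ (resp.\ $j=1$) the $\Sigma_5$-condition cuts the $175$-dimensional (resp.\ $130$-dimensional) space down to an explicit low-dimensional subspace; one then imposes the remaining generator $\sigma_5$, whose action $x_1\mapsto x_1+x_2$ is not a permutation and genuinely mixes monomials inside $(\mathbb{P}^{\otimes 5}_{14})^{\leq\widetilde{\omega}_{(j)}}$. I expect this last step to annihilate the subspace entirely when $j=3$ and to leave exactly the line $\langle[\zeta]\rangle$ when $j=1$. Since $\zeta$ is a nonempty sum of distinct elements of the basis of $(Q^{\otimes 5}_{14})^{\widetilde{\omega}_{(1)}}$ exhibited in Remark \ref{nx14}, the invariant space is spanned by one nonzero element, hence is one-dimensional.

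To assemble the consequence: the weight-vector decomposition $Q^{\otimes 5}_{14}\cong\bigoplus_{j=1}^{3}(Q^{\otimes 5}_{14})^{\widetilde{\omega}_{(j)}}$ of Remark \ref{nx14} is $GL_5$-equivariant by Sum \cite{N.S3}, so taking invariants gives $[Q^{\otimes 5}_{14}]^{GL_5}\cong\bigoplus_{j}[(Q^{\otimes 5}_{14})^{\widetilde{\omega}_{(j)}}]^{GL_5}=\langle[\zeta]\rangle$, which is at most one-dimensional. Since $P_{\mathscr{A}_2}((\mathbb{P}^{\otimes 5}_{14})^{*})$ is canonically the $\mathbb{Z}_2GL_5$-module dual of $Q^{\otimes 5}_{14}$, and $[M^{*}]_{GL_5}\cong([M]^{GL_5})^{*}$ for a finite-dimensional $\mathbb{Z}_2GL_5$-module $M$, it follows that $\dim[P_{\mathscr{A}_2}((\mathbb{P}^{\otimes 5}_{14})^{*})]_{GL_5}=\dim[Q^{\otimes 5}_{14}]^{GL_5}\leq1$, as claimed.

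The main obstacle is the middle step of the first stage. To turn each invariance condition into usable equations one must express, explicitly and correctly, every monomial of the form $\sigma_d(X)$ — and, above all, the non-permutation images $\sigma_5(X)$ — as a sum of admissible monomials modulo hit elements and modulo $(\mathbb{P}^{\otimes 5}_{14})^{<\widetilde{\omega}_{(j)}}$. This requires a sizeable catalogue of ``inadmissible $\Rightarrow$ sum of strictly smaller admissibles'' identities, each verified by a Cartan-formula computation of the kind illustrated in Lemmas \ref{bd1}, \ref{bd2} and Remark \ref{nx32}, followed by Gaussian elimination over $\mathbb{Z}_2$ in up to $175$ unknowns. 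The difficulty is one of bookkeeping rather than of principle, which is precisely why an independent machine verification (as supplied in the Appendix) is valuable here.
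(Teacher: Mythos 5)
Your strategy is essentially the paper's: decompose each $(Q^{\otimes 5}_{14})^{\widetilde{\omega}_{(j)}}$ into $\Sigma_5$-submodules generated by orbits of admissible monomials, compute the $\Sigma_5$-invariants by imposing $\sigma_1,\dots,\sigma_4$, and then cut down further with the non-permutation generator $\sigma_5$; the paper carries this out for $j=1$ (obtaining the six orbit-sum generators $\widehat{p_1},\dots,\widehat{p_6}$, of which only $\widehat{p_4}+\widehat{p_6}=\zeta$ survives $\sigma_5$) and for $j=2$, leaving $j=3$ to the reader. One concrete misprediction in your sketch: for $j=2$ the $\Sigma_5$-invariant subspace is \emph{not} already zero. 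The module splits as $\Sigma_5(W_{131})\oplus\Sigma_5(W_{136})$ and each summand contributes a nonzero orbit-sum invariant ($\sum_{131\le j\le 135}W_j$ and $\sum_{136\le j\le 145}W_j$), so the $\Sigma_5$-invariants are two-dimensional; only the relation $\sigma_5(H)+H\equiv_{\widetilde{\omega}_{(2)}}0$ forces both coefficients to vanish. Since the symmetrization of a monomial always yields a $\Sigma_5$-invariant class, expecting the permutation step alone to kill everything was never plausible here. Finally, in the concluding step you assert a $GL_5$-equivariant direct-sum decomposition $Q^{\otimes 5}_{14}\cong\bigoplus_j(Q^{\otimes 5}_{14})^{\widetilde{\omega}_{(j)}}$ and hence equality of invariants; the weight decomposition is a priori only a filtration with $GL_5$-stable associated graded pieces, which gives the inequality $\dim[Q^{\otimes 5}_{14}]^{GL_5}\leq\sum_j\dim[(Q^{\otimes 5}_{14})^{\widetilde{\omega}_{(j)}}]^{GL_5}$ -- but that inequality is all the proposition claims, so your argument for the ``consequently'' part still goes through.
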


\begin{proof}
We prove the proposition for the cases $j = 1, 2.$ The result when $j = 3$ can be obtained by using similar idea. We first consider the case $j = 1.$ Note that since $\widetilde{\omega}_{(1)}$ is the weight vector of the minimal spike $x_1^{7}x_2^{7}\in \mathbb P^{\otimes 5}_{14},$ $[W_j]_{\widetilde{\omega}_{(1)}} = [W_j]$ for all $W_j\in (Q^{\otimes 5}_{14})^{\widetilde{\omega}_{(1)}}.$ From the admissible monomial basis of $(Q^{\otimes 5}_{14})^{\widetilde{\omega}_{(1)}},$ we have a direct summand decomposition of the $\Sigma_5$-submodules:
$$ \begin{array}{ll}
 (Q^{\otimes 5}_{14})^{\widetilde{\omega}_{(1)}} &= \Sigma_5(W_1) \bigoplus \Sigma_5(W_{11})  \bigoplus \Sigma_5(W_{41}) \bigoplus \Sigma_5(W_{51})\\
&\quad \bigoplus \Sigma_5(W_{71}, W_{81}, W_{106}, W_{111}) \bigoplus \mathbb V,
\end{array}$$    
where $$ \begin{array}{ll}
\medskip
&\Sigma_5(W_1) = \langle \{[W_j]:\, 1\leq j\leq 10 \} \rangle,\ \ \Sigma_5(W_{11}) = \langle \{[W_j]:\, 11\leq j\leq 40 \} \rangle,\\
\medskip
&\Sigma_5(W_{41}) = \langle \{[W_j]:\, 41\leq j\leq 50 \} \rangle, \ \ \Sigma_5(W_{51}) = \langle \{[W_j]:\, 51\leq j\leq 70 \} \rangle,\\
\medskip
 &\Sigma_5(W_{71}, W_{81}, W_{106}, W_{111}) = \langle \{[W_j]:\, 71\leq j\leq 115 \} \rangle,\\
&\mathbb V = \langle \{[W_j]:\, 116\leq j\leq 130 \} \rangle.
\end{array}$$  
Then, a direct computation shows that 

\begin{itemize}
\item[(i)]  $[\Sigma_5(W_1)]^{\Sigma_5} = \langle [\widehat{p _1}] \rangle$ with $\widehat{p _1}:= \sum_{1\leq j\leq 10}W_j,$
\item[(ii)] $[\Sigma_5(W_{11})]^{\Sigma_5} = \langle [\widehat{p _2}] \rangle$ with $\widehat{p _2}:= \sum_{11\leq j\leq 40}W_j,$
\item[(iii)] $[\Sigma_5(W_{41})]^{\Sigma_5} = \langle [\widehat{p _3}] \rangle$ with $\widehat{p _3}:= \sum_{41\leq j\leq 50}W_j,$
\item[(iv)] $[\Sigma_5(W_{51})]^{\Sigma_5} = \langle [\widehat{p _4}] \rangle$ with $\widehat{p _4}:= \sum_{51\leq j\leq 70}W_j,$
\item[(v)] $[\Sigma_5(W_{71}, W_{81}, W_{106}, W_{111})]^{\Sigma_5} = \langle [\widehat{p _5}] \rangle$\\[1mm] with $\widehat{p _5}:= W_{71} + W_{73} + W_{75} + W_{76} + W_{77} +\sum_{111\leq j\leq 115}W_j,$
\item[(vi)] $[\mathbb V]^{\Sigma_5} =  \langle [\widehat{p _6}] \rangle$ with $\widehat{p _6}:= \sum_{116\leq j\leq 119}W_j + W_{121} + W_{122}+ W_{124} + W_{125} + \sum_{127\leq j\leq 130}W_j.$
\end{itemize}

Indeed, for simplicity, we prove the result (vi) in detail. The others can be proved by the similar computations. An admissible monomial basis for $\mathbb V$ is the set $\{[W_j]:\, 116\leq j\leq 130\}.$ Suppose that $[F]\in [\mathbb V]^{\Sigma_5},$ then $$ F\equiv \sum_{116\leq j\leq 130}\gamma_jW_j$$
with $\gamma_j\in \mathbb Z_2.$ Acting the homomorphisms $\sigma_d: \mathbb P^{\otimes 5}_{14}\to \mathbb P^{\otimes 5}_{14}$ on both sides of this equality and using the relations $\sigma_d(F) + F\equiv 0,$ for $1\leq d\leq 4,$ we get
$$ \begin{array}{ll}
\medskip
\sigma_1(F) + F&\equiv (\gamma_{119} + \gamma_{121})W_{116} + \gamma_{120}W_{117} + \gamma_{123}W_{118} + (\gamma_{122} + \gamma_{130})(W_{122} + W_{130})\\
\medskip
 &\quad + (\gamma_{125} + \gamma_{127})(W_{125} + W_{127}) + (\gamma_{126} + \gamma_{128} + \gamma_{130})W_{126}\\
\medskip
&\quad + (\gamma_{122} + \gamma_{126} + \gamma_{128})W_{128}\equiv 0,\\
\medskip
\sigma_2(F) + F&\equiv (\gamma_{116} + \gamma_{119})(W_{116} + W_{119}) + (\gamma_{117} + \gamma_{120} + \gamma_{129})(W_{117}+W_{120}) \\
\medskip
&\quad + (\gamma_{118} + \gamma_{122})(W_{118} + W_{122})  +  (\gamma_{123} + \gamma_{126})(W_{123} + W_{126})\\
\medskip
&\quad +  (\gamma_{124} + \gamma_{125} + \gamma_{126})W_{124}+(\gamma_{123} + \gamma_{124} + \gamma_{125})W_{125}\\
\medskip
&\quad + (\gamma_{128} + \gamma_{130})(W_{128} + W_{130})\equiv 0,\\
\medskip
\sigma_3(F) + F&\equiv (\gamma_{117} + \gamma_{118} + \gamma_{123})W_{117} + (\gamma_{117} + \gamma_{118} + \gamma_{120})W_{118} \\
\medskip
&\quad + (\gamma_{119} + \gamma_{121})(W_{119} + W_{121})+  (\gamma_{120} + \gamma_{123})(W_{120} + W_{123})\\
\medskip
&\quad   +(\gamma_{124} + \gamma_{129})(W_{124} + W_{129}) +  (\gamma_{122} + \gamma_{125} + \gamma_{126})(W_{125}+W_{126}) \\
\medskip
&\quad + (\gamma_{127} + \gamma_{128})(W_{127} + W_{128})\equiv 0,\\
\medskip
\sigma_4(F) + F&\equiv   (\gamma_{116} + \gamma_{117})(W_{116} + W_{117}) + (\gamma_{119} + \gamma_{120} + \gamma_{129})W_{119}\\
\medskip
&\quad + (\gamma_{119} + \gamma_{120} + \gamma_{121})W_{120} +  (\gamma_{121} + \gamma_{129})(W_{121} + W_{129}) \\
&\quad + \gamma_{123}W_{124} + \gamma_{126}W_{125} + (\gamma_{128} + \gamma_{130})W_{127} \equiv 0.
\end{array}$$
These equalities show that $\gamma_{120} = \gamma_{123} = \gamma_{126}=0$ and $\gamma_j = \gamma_{121}$ for $j\neq 120,\, 123,\, 126.$ This confirms the above statement. Now, let $G\in (\mathbb P^{\otimes 5}_{14})^{\widetilde{\omega}_{(1)}}$ such that $[G]\in [(Q^{\otimes 5}_{14})^{\widetilde{\omega}_{(1)}}]^{GL_5}.$ Because $\Sigma_5\subset GL_5,$ from the above calculations, we have
$$ G\equiv \beta_1\widehat{p _1} + \beta_2\widehat{p _2}+\beta_3\widehat{p _3}+\beta_4\widehat{p _4}+\beta_5\widehat{p _5}+\beta_6\widehat{p _6}$$
in which $\beta_j\in \mathbb Z_2.$ Using the relation $\sigma_5(G) + G\equiv 0,$ we get
$$  (\beta_1 + \beta_2)W_4 +  (\beta_2 + \beta_5)W_{14} + \beta_2W_{20} +   (\beta_2 + \beta_3)W_{32} + (\beta_4 + \beta_6)W_{111}+ \ \mbox{other terms} \equiv 0.$$
This equality implies that $\beta_4 = \beta_6$ and $\beta_j = 0$ for $j\neq 4,\, 6.$ This means that $$[(Q^{\otimes 5}_{14})^{\widetilde{\omega}_{(1)}}]^{GL_5} = \langle [\zeta:= \widehat{p _4} + \widehat{p _6}] \rangle.$$  

Now we prove the proposition for the case $j = 2.$ According to Remark \ref{nx14}, an admissible basis of $(Q^{\otimes 5}_{14})^{\widetilde{\omega}_{(2)}}$ is the set $\{[W_j]_{\widetilde{\omega}_{(2)}}:\, 131\leq j\leq 145\}.$ From this, a direct computation shows that
$$ (Q^{\otimes 5}_{14})^{\widetilde{\omega}_{(2)}} = (Q^{\otimes 5}_{14})^{(\widetilde{\omega}_{(2)})^{>0}}  \cong \Sigma_5(W_{131})\bigoplus \Sigma_5(W_{136}),$$
where $\Sigma_5(W_{131}) = \langle \{[W_j]_{\widetilde{\omega}_{(2)}}:\, 131\leq j\leq 135 \}\rangle$ and $\Sigma_5(W_{136}) = \langle \{[W_j]_{\widetilde{\omega}_{(2)}}:\, 136\leq j\leq 145 \}\rangle$ and that
$$ \begin{array}{ll}
\medskip
 [\Sigma_5(W_{131})]^{\Sigma_5} &= \langle [\sum_{131\leq j\leq 135}W_j]_{\widetilde{\omega}_{(2)}} \rangle\\
 \mbox{[}\Sigma_5(W_{136})\mbox{]}^{\Sigma_5} &= \langle [\sum_{136\leq j\leq 145}W_j]_{\widetilde{\omega}_{(2)}} \rangle.
\end{array}$$
Then if $H\in (\mathbb P^{\otimes 5}_{14})^{\widetilde{\omega}_{(2)}}$ such that $[H]\in [(Q^{\otimes 5}_{14})^{\widetilde{\omega}_{(2)}}]^{GL_5},$ then $$H\equiv_{\widetilde{\omega}_{(2)}} \xi_1\sum_{131\leq j\leq 135}W_j +\xi_2\sum_{136\leq j\leq 145}W_j$$ with $\xi_1$ and $\xi_2$ belong to $\mathbb Z_2.$ Straightforward calculations indicate that
$$ \sigma_5(H) + H\equiv_{\widetilde{\omega}_{(2)}}  (\beta_1 + \beta_2)W_{134} +  \beta_2 (W_{139}  +W_{140} + W_{141}) \equiv_{\widetilde{\omega}_{(2)}} 0$$
which implies $\xi_1 = \xi_2 = 0.$ So $[(Q^{\otimes 5}_{14})^{\widetilde{\omega}_{(2)}}]^{GL_5}$ is trivial. 

Now because $Q^{\otimes 5}_{14}\cong \bigoplus_{1\leq j\leq 3} (Q^{\otimes 5}_{14})^{\widetilde{\omega}_{(j)}}$ (see Remark \ref{nx14}) and $[P_{\mathscr A_2}((\mathbb P_{14}^{\otimes 5})^{*})]_{GL_5}\cong [Q^{\otimes 5}_{14}]^{GL_5},$ by the above calculations, we conclude that
$$ \dim [P_{\mathscr A_2}((\mathbb P_{14}^{\otimes 5})^{*})]_{GL_5}\leq \sum_{1\leq j\leq 3}\dim [(Q^{\otimes 5}_{14})^{\widetilde{\omega}_{(j)}}]^{GL_5} = 1.$$
The proof of the proposition is completed.
\end{proof}

Now following Lin \cite{Lin} and Chen \cite{Chen}, it implies that ${\rm Ext}_{\mathscr {A}_2}^{5,5+14}(\mathbb Z_2,\mathbb Z_2) = \langle h_0d_0 \rangle$ with $h_0d_0\neq 0.$ Further, $h_0\in {\rm Im}(Tr_1^{\mathscr A_2})$ (see Singer \cite{W.S1}),  $d_0\in {\rm Im}(Tr_4^{\mathscr A_2})$ (see H\`a \cite{Ha}) and the "total" transfer $Tr^{\mathscr A_2}$ is an algebraic homomorphism. So, $h_0d_0\in {\rm Im}(Tr_5^{\mathscr A_2}),$ that is, $\dim [P_{\mathscr A_2}((\mathbb P_{14}^{\otimes 5})^{*})]_{GL_5}\geq 1.$ This and Proposition \ref{md14} show that $[P_{\mathscr A_2}((\mathbb P_{14}^{\otimes 5})^{*})]_{GL_5}$ is one-dimensional, which confirms the result for the case $n = 14.$
\end{proof}

\begin{proof}[{\it Proof of Case n = 31}]

Let us recall that Kameko's squaring operation $\widetilde {Sq^0_*}: Q^{\otimes 5}_{31}  \longrightarrow Q^{\otimes 5}_{13}$ is an epimorphism of $\mathbb Z_2GL_5$-modules. So, $Q^{\otimes 5}_{31}\cong {\rm Ker}\widetilde {Sq^0_*}\bigoplus Q^{\otimes 5}_{13}.$  By our previous result in \cite{D.P4} that the invariant space $[Q^{\otimes 5}_{13}]^{GL_5}$ is trivial, and we therefore deduce an estimate 
$$ \dim [P_{\mathscr A_2}((\mathbb P_{31}^{\otimes 5})^{*})]_{GL_5} = \dim [Q^{\otimes 5}_{31}]^{GL_5}\leq \dim [{\rm Ker}\widetilde {Sq^0_*}]^{GL_5}.$$
On the other hand, the works by Lin \cite{Lin} and Chen \cite{Chen} show that ${\rm Ext}_{\mathscr {A}_2}^{5,5+31}(\mathbb Z_2,\mathbb Z_2) = \langle n_0, h_0^{4}h_5 \rangle.$ Moreover, the non-zero elements $h_0^{4}h_5$ and $n_0$ are detected by the rank 5 transfer (see Singer \cite{W.S1}, Ch\ohorn n-H\`a \cite{C.H}). Hence, $\dim [P_{\mathscr A_2}((\mathbb P_{31}^{\otimes 5})^{*})]_{GL_5}\geq 2.$ Thus, to prove that $[P_{\mathscr A_2}((\mathbb P_{31}^{\otimes 5})^{*})]_{GL_5}$ has dimension $2,$ we will show that $\dim [{\rm Ker}\widetilde {Sq^0_*}]^{GL_5}\leq 2.$ But this is immediate from Theorem \ref{dlc1} and the following proposition.

\begin{propo}\label{mdbb31}
The following holds: 
$$ \dim [(Q^{\otimes 5}_{31})^{\omega_{(j)}}]^{GL_5} = \left\{\begin{array}{ll}
1 &\mbox{if $j = 1,\, 2$},\\
0&\mbox{if $j = 3$}.
\end{array}\right.$$
\end{propo}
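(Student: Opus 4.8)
The plan is to attack each weight vector $\omega_{(j)}$ separately, exactly as in the $n=14$ case treated in Proposition \ref{md14}, by first decomposing the $\mathbb Z_2GL_5$-module $(Q^{\otimes 5}_{31})^{\omega_{(j)}}$ into a direct sum of $\Sigma_5$-submodules generated by orbits of the admissible monomial basis, computing the space of $\Sigma_5$-invariants of each summand, and then imposing the single extra relation coming from $\sigma_5$ to cut the $\Sigma_5$-invariants down to the $GL_5$-invariants. For $j=3$ the task is light: Proposition \ref{md3} gives an explicit $70$-element admissible basis $\{[Y_j]_{\omega_{(3)}}\}$, so I would group these $Y_j$ into $\Sigma_5$-orbits, write down a general $\Sigma_5$-invariant as a sum of orbit-sums with coefficients in $\mathbb Z_2$, act by each $\sigma_d$ ($1\le d\le 4$) and read off which orbit-sums are actually $\Sigma_5$-fixed; then acting by $\sigma_5$ on that candidate space I expect every coefficient to be forced to zero, yielding $[(Q^{\otimes 5}_{31})^{\omega_{(3)}}]^{GL_5}=0$.

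For $j=1$ the computation is essentially trivial: by Proposition \ref{md1}, $(Q^{\otimes 5}_{31})^{\omega_{(1)}^{>0}}=\langle[x_1x_2^{2}x_3^{4}x_4^{8}x_5^{16}]\rangle$, and together with the decomposition $(Q^{\otimes 5}_{31})^{\omega_{(1)}} = (Q^{\otimes 5}_{31})^{\omega_{(1)}^{0}}\oplus(Q^{\otimes 5}_{31})^{\omega_{(1)}^{>0}}$ and the identification of $(Q^{\otimes 5}_{31})^{0}$ with a sum over subsets $\mathcal J$ of the spaces $(Q^{\otimes\,s}_{31})^{>0}$, one sees $(Q^{\otimes 5}_{31})^{\omega_{(1)}}$ is spanned by the $GL_5$-orbit of the class $[x_1x_2^2x_3^4x_4^8x_5^{16}]$ (the image of the minimal spike $x_1^{31}$ under Kameko doubling of a spike, or more precisely the unique class surviving in this weight). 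Since $\omega_{(1)}=(1,1,1,1,1)$ is the weight vector of a minimal spike, $[F]_{\omega_{(1)}}=[F]$, and one checks directly that the symmetrization $\sum_{\sigma\in\Sigma_5}\sigma(x_1x_2^2x_3^4x_4^8x_5^{16})$ is $\Sigma_5$-invariant and then that it is also fixed by $\sigma_5$ modulo $\overline{\mathscr A_2}\mathbb P^{\otimes 5}_{31}$; this produces a one-dimensional space of $GL_5$-invariants, and a short argument (e.g. that the $\Sigma_5$-invariants of this piece are already one-dimensional) shows there is nothing more.

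The real work is $j=2$, where Proposition \ref{md2} provides a $215$-dimensional admissible basis $[A(\omega_{(2)})\cup B(\omega_{(2)})]_{\omega_{(2)}}$. Here I would (i) partition the $215$ basis monomials into $\Sigma_5$-orbits, obtaining a direct-sum decomposition of $(Q^{\otimes 5}_{31})^{\omega_{(2)}}$ into cyclic $\Sigma_5$-submodules $\Sigma_5(X_{i_1},\dots)$ as in the $n=14$ case; (ii) for each such summand solve the linear system $\sigma_d(F)+F\equiv_{\omega_{(2)}}0$, $1\le d\le 4$, to determine its $\Sigma_5$-invariant subspace — each of these being $0$ or $1$ dimensional, spanned by an explicit orbit-sum $\widehat p_k$; (iii) collect a general $\Sigma_5$-invariant $G\equiv\sum_k\beta_k\widehat p_k$ and impose $\sigma_5(G)+G\equiv_{\omega_{(2)}}0$, which yields linear relations among the $\beta_k$ whose solution space I expect to be exactly one-dimensional, giving $\dim[(Q^{\otimes 5}_{31})^{\omega_{(2)}}]^{GL_5}=1$ (the surviving class being the one that, together with the $j=1$ class, maps under the transfer to $n_0$ and $h_0^4h_5$). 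The main obstacle is step (ii)–(iii) for $j=2$: expressing the $\sigma_d$-images of the $215$ basis monomials back in terms of the admissible basis modulo $\overline{\mathscr A_2}\mathbb P^{\otimes 5}_{31}\cap(\mathbb P^{\otimes 5}_{31})^{\le\omega_{(2)}}+(\mathbb P^{\otimes 5}_{31})^{<\omega_{(2)}}$ requires the same kind of lengthy Cartan-formula reductions used to prove Lemmas \ref{bd1} and \ref{bd2}, and bookkeeping the resulting large linear system is where errors are most likely; this is exactly the part for which the MAGMA algorithm in the Appendix serves as an independent check.
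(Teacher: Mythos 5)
Your overall strategy --- decompose each $(Q^{\otimes 5}_{31})^{\omega_{(j)}}$ into $\Sigma_5$-submodules generated by orbits of the admissible basis, solve $\sigma_d(F)+F\equiv 0$ for $1\le d\le 4$ to get the $\Sigma_5$-invariants, then impose the single extra relation from $\sigma_5$ --- is exactly the paper's, and your outlines for $j=3$ and $j=2$ match what the paper does (the paper in fact writes out only $j=1$ and $j=3$ and leaves $j=2$ to the reader, so your sketch of $j=2$ is a plan rather than a proof, but it is the right plan).

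The genuine gap is in $j=1$, which is not ``essentially trivial'' in the way you describe. The space $(Q^{\otimes 5}_{31})^{\omega_{(1)}}$ is $31$-dimensional: besides $[x_1x_2^{2}x_3^{4}x_4^{8}x_5^{16}]$ it contains the $30$ classes of $(Q^{\otimes 5}_{31})^{\omega_{(1)}^{0}}$ (represented by $x_i^{31}$, $x_ix_j^{30}$, $x_ix_j^{2}x_k^{28}$, $x_ix_j^{2}x_k^{4}x_l^{24}$), which form four further $\Sigma_5$-orbits; hence $[(Q^{\otimes 5}_{31})^{\omega_{(1)}}]^{\Sigma_5}$ is $5$-dimensional, so your parenthetical claim that the $\Sigma_5$-invariants ``are already one-dimensional'' is false and cannot close the argument. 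Worse, your proposed generator fails the $\sigma_5$-test: $\sigma_5(x_1x_2^{2}x_3^{4}x_4^{8}x_5^{16}) + x_1x_2^{2}x_3^{4}x_4^{8}x_5^{16} = x_2^{3}x_3^{4}x_4^{8}x_5^{16}$, and this monomial is not hit --- it reduces to the nonzero admissible class $[x_2x_3^{2}x_4^{4}x_5^{24}]$ lying in $(Q^{\otimes 5}_{31})^{\omega_{(1)}^{0}}$. So the class $[x_1x_2^{2}x_3^{4}x_4^{8}x_5^{16}]$ (which is already $\Sigma_5$-fixed, its orbit in the quotient being a single class) is \emph{not} $GL_5$-invariant on its own. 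The correct computation shows that the $\sigma_5$-relation chains all five orbit-sums together, forcing their coefficients to be equal, so the unique $GL_5$-invariant is the sum of all $31$ admissible classes of weight $\omega_{(1)}$, not the single class you propose. The conclusion $\dim[(Q^{\otimes 5}_{31})^{\omega_{(1)}}]^{GL_5}=1$ is still correct, but only after this full five-orbit bookkeeping, which your sketch omits.
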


\begin{proof}
We prove the proposition for $j = 1,\, 3$ and leave the rest to the reader.  First of all, we consider the space $(Q^{\otimes 5}_{31})^{\omega_{(1)}} = (Q^{\otimes 5}_{31})^{\omega_{(1)}^{0}}\bigoplus (Q^{\otimes 5}_{31})^{\omega_{(1)}^{>0}},$ where as shown in the proofs of Theorem \ref{dlc1} and Proposition \ref{md1}, the readers can notice that $Q^{\otimes 5}_{31})^{\omega_{(1)}^{0}} $ is a $\mathbb Z_2$-vector space of dimension $30$ with a basis consisting all the classes represented by the following admissible monomials:

\begin{center}
\begin{tabular}{lllll}
$L_{1}=x_5^{31}$, & $L_{2}=x_4^{31}$, & $L_{3}=x_3^{31}$, & $L_{4}=x_2^{31}$, & $L_{5}=x_1^{31}$, \\
$L_{6}=x_4x_5^{30}$, & $L_{7}=x_3x_5^{30}$, & $L_{8}=x_3x_4^{30}$, & $L_{9}=x_2x_5^{30}$, & $L_{10}=x_2x_4^{30}$, \\
$L_{11}=x_2x_3^{30}$, & $L_{12}=x_1x_5^{30}$, & $L_{13}=x_1x_4^{30}$, & $L_{14}=x_1x_3^{30}$, & $L_{15}=x_1x_2^{30}$, \\
$L_{16}=x_3x_4^{2}x_5^{28}$, & $L_{17}=x_2x_4^{2}x_5^{28}$, & $L_{18}=x_2x_3^{2}x_5^{28}$, & $L_{19}=x_2x_3^{2}x_4^{28}$, & $L_{20}=x_1x_4^{2}x_5^{28}$, \\
$L_{21}=x_1x_3^{2}x_5^{28}$, & $L_{22}=x_1x_3^{2}x_4^{28}$, & $L_{23}=x_1x_2^{2}x_5^{28}$, & $L_{24}=x_1x_2^{2}x_4^{28}$, & $L_{25}=x_1x_2^{2}x_3^{28}$, \\
$L_{26}=x_2x_3^{2}x_4^{4}x_5^{24}$, & $L_{27}=x_1x_3^{2}x_4^{4}x_5^{24}$, & $L_{28}=x_1x_2^{2}x_4^{4}x_5^{24}$, & $L_{29}=x_1x_2^{2}x_3^{4}x_5^{24}$, & $L_{30}=x_1x_2^{2}x_3^{4}x_4^{24}$
\end{tabular}%
\end{center}
and that $Q^{\otimes 5}_{31})^{\omega_{(1)}^{>0}}$ is 1-dimensional with the basis $\{[L_{31}= x_1x_2^{2}x_3^{4}x_4^{8}x_5^{16}]\}.$ Thus, an admissible monomial basis of $(Q^{\otimes 5}_{31})^{\omega_{(1)}}$ is the set $\{[L_j]_{\omega_{(1)}} = [L_j]:\, 1\leq j\leq 31\}.$ Then it is not difficult to check that 
$$ (Q^{\otimes 5}_{31})^{\omega_{(1)}} \cong \Sigma_5(L_1) \bigoplus \Sigma_5(L_6) \bigoplus \Sigma_5(L_{16})\bigoplus \Sigma_5(L_{26})\bigoplus \Sigma_5(L_{31}),$$
where $$ \begin{array}{ll}
\medskip
\Sigma_5(L_1) &= \langle \{[L_j]:\, 1\leq j\leq 5\} \rangle, \ \ \Sigma_5(L_6) = \langle \{[L_j]:\, 6\leq j\leq 15\} \rangle \\
\medskip
\Sigma_5(L_{16}) &= \langle \{[L_j]:\, 16\leq j\leq 25\} \rangle, \ \ \Sigma_5(L_{26}) = \langle \{[L_j]:\, 26\leq j\leq 30\} \rangle, \\
\Sigma_5(L_{31}) &= \langle [L_{31}] \rangle.
\end{array}$$
Then, by using the homomorphisms $\sigma_d: \mathbb P^{\otimes 5}_{31}\to \mathbb P^{\otimes 5}_{31}$ for $1\leq d\leq 4$ and similar computations as in the proof of Proposition \ref{md14}, we obtain 
 $$ \begin{array}{ll}
\medskip
[\Sigma_5(L_1)]^{\Sigma_5} &= \langle [\sum_{1\leq j\leq 5}L_j] \rangle,\ \ \mbox{[}\Sigma_5(L_6)\mbox{]}^{\Sigma_5} = \langle [\sum_{6\leq j\leq 15}L_j] \rangle,\\
\medskip
\mbox{[}\Sigma_5(L_{16})\mbox{]}^{\Sigma_5} &= \langle [\sum_{16\leq j\leq 25}L_j] \rangle,\ \ \mbox{[}\Sigma_5(L_{26})\mbox{]}^{\Sigma_5} = \langle [\sum_{26\leq j\leq 30}L_j] \rangle,\\
\mbox{[}\Sigma_5(L_{31})\mbox{]}^{\Sigma_5} &= \langle [L_{31}] \rangle.
\end{array}$$
These show that if $[T]\in  [(Q^{\otimes 5}_{31})^{\omega_{(1)}}]^{GL_5}$ then $$T\equiv \gamma_1\sum_{1\leq j\leq 5}L_j + \gamma_2\sum_{6\leq j\leq 15}L_j +\gamma_3\sum_{16\leq j\leq 25}L_j +\gamma_4\sum_{26\leq j\leq 30}L_j +\gamma_5L_{31}$$
where $\gamma_j\in \mathbb Z_2,\, 1\leq j\leq 5.$ Acting the homomorphism $\sigma_5: \mathbb P^{\otimes 5}_{31}\to \mathbb P^{\otimes 5}_{31}$ on both sides of this equality and using the relation $\sigma_5(T) + T\equiv 0,$ we get
$$ (\gamma_1 + \gamma_2)L_4 + (\gamma_2 + \gamma_3)(L_9 + L_{10}+L_{11}) + (\gamma_3 + \gamma_4)(L_{17} + L_{18}+L_{19}) + (\gamma_4 + \gamma_5)L_{26} \equiv 0$$
and therefore $\gamma_1 = \gamma_2 = \gamma_3 = \gamma_4 = \gamma_5,$ that is $[(Q^{\otimes 5}_{31})^{\omega_{(1)}}]^{GL_5} = \langle [\sum_{1\leq j\leq 31}L_j] \rangle.$ Therefore  $[(Q^{\otimes 5}_{31})^{\omega_{(1)}}]^{GL_5}$ has dimension 1.

Now, we prove the proposition for $j = 3.$ Recall that the set $\{[Y_j]_{\omega_{(3)}}:\, 1\leq j\leq 70\}$ is an admissible monomial basis of $(Q^{\otimes 5}_{31})^{\omega_{(3)}} = (Q^{\otimes 5}_{31})^{\omega_{(1)}^{>0}}.$ By a simple computation, we find that 
$(Q^{\otimes 5}_{31})^{\omega_{(3)}} \cong \Sigma_5(Y_1)\bigoplus \Sigma_5(Y_{21})\bigoplus \mathbb U,$ where $\Sigma_5(Y_1) =\langle \{[Y_j]_{\omega_{(3)}}:\, 1\leq j\leq 20\} \rangle,$ $\Sigma_5(Y_{21}) =\langle \{[Y_j]_{\omega_{(3)}}:\, 21\leq j\leq 50\} \rangle$ and $\mathbb U = \langle \{[Y_j]_{\omega_{(3)}}:\, 51\leq j\leq 70\} \rangle,$ and that $[\Sigma_5(Y_1)]^{\Sigma_5} =\langle [\sum_{1\leq j\leq 20}Y_j]_{\omega_{(3)}} \rangle,$ $[\Sigma_5(Y_{21})]^{\Sigma_5} =\langle [\sum_{21\leq j\leq 50}Y_j]_{\omega_{(3)}} \rangle$ and $[\mathbb U]^{\Sigma_5} = 0.$ From these calculations, we find that for any $W\in \mathbb P^{\otimes 5}_{31}$ such that $[W]_{\omega_{(3)}}\in [(Q^{\otimes 5}_{31})^{\omega_{(3)}}]^{GL_5},$ then $W\equiv_{\omega_{(3)}} \beta_1\sum_{1\leq j\leq 20}Y_j + \beta_2\sum_{21\leq j\leq 50}Y_j$ where $\beta_1$ and $\beta_2$ belong to $\mathbb Z_2.$ Now, it is not too difficult to check that 
$$ \sigma_5(W) + W \equiv_{\omega_{(3)}} \beta_1(\sum_{1\leq j\leq 4}Y_j) + \beta_2(Y_8 + Y_{17} + Y_{18}) + \mbox{other terms}\ \equiv_{\omega_{(3)}} 0$$
and so $\beta_1 = \beta_2 = 0.$ This means that  $[(Q^{\otimes 5}_{31})^{\omega_{(3)}}]^{GL_5}$ is trivial. The proposition follows. 
\end{proof}
Thus, as argued above, we claim that the coinvariant $[P_{\mathscr A_2}((\mathbb P_{31}^{\otimes 5})^{*})]_{GL_5}$ is 2-dimensional. This completes the proof of the case $n = 31.$
\end{proof}

\begin{proof}[{\it Proof of Case n = 32}]

It suffices to show that the spaces of $GL_5$-invariants $[(Q^{\otimes 5}_{32})^{\omega'_{(j)}}]^{GL_5}$ are trivial, for any $j,$ $1\leq j\leq 3.$  Indeed, we prove this statement for $j = 1.$ The others can be obtained by similar computations. Let us recall that from the proof of Theorem \ref{dlc4}, we have $(Q^{\otimes 5}_{32})^{\omega'_{(1)}}\cong (Q^{\otimes 5}_{32})^{(\omega'_{(1)})^{0}}\bigoplus (Q^{\otimes 5}_{32})^{(\omega'_{(1)})^{>0}},$ where $\dim (Q^{\otimes 5}_{32})^{(\omega'_{(1)})^{0}} = 115$ and $\dim (Q^{\otimes 5}_{32})^{(\omega'_{(1)})^{>0}} = 9.$ A simple computation shows that a basis of $(Q^{\otimes 5}_{32})^{(\omega'_{(1)})^{0}}$ is the set of all classes represented by the admissible monomials $\widetilde{Z_j}$ for $1\leq j\leq 115$ (see Subsect.\ref{s32_1}). We put $A(\omega'_{(1)}) = \{\widetilde{Z_j}:\, 1\leq j\leq 115\},$ then from the calculations in Theorem \ref{dlc4}, we see that the set $[A(\omega'_{(1)}) \cup B(\omega'_{(1)}) \cup C(\omega'_{(1)})]_{\omega'_{(1)}} = [A(\omega'_{(1)}) \cup B(\omega'_{(1)}) \cup C(\omega'_{(1)})]$ is an admissible basis of $(Q^{\otimes 5}_{32})^{\omega'_{(1)}}$ and therefore  we have a direct summand decomposition of the $\Sigma_5$-modules:
$$ (Q^{\otimes 5}_{32})^{\omega'_{(1)}}  = \Sigma_5(\widetilde{Z_1})\bigoplus  \Sigma_5(\widetilde{Z_{21}}) \bigoplus  \Sigma_5(\widetilde{Z_{31}}, \widetilde{Z_{51}}) \bigoplus \Sigma_5(\widetilde{Z_{81}}, \widetilde{Z_{101}})\bigoplus \mathbb M,$$
where $$ \begin{array}{ll}
\medskip
&\Sigma_5(\widetilde{Z_1}) = \langle \{[\widetilde{Z_j}]:\, 1\leq j\leq 20\} \rangle, \ \ \Sigma_5(\widetilde{Z_{21}}) = \langle \{[\widetilde{Z_j}]:\, 21\leq j\leq 30\} \rangle, \\
\medskip
&\Sigma_5(\widetilde{Z_{31}}, \widetilde{Z_{51}}) = \langle \{[\widetilde{Z_j}]:\, 31\leq j\leq 80\} \rangle, \ \ \Sigma_5(\widetilde{Z_{81}}, \widetilde{Z_{101}}) = \langle \{[\widetilde{Z_j}]:\, 81\leq j\leq 115\} \rangle,\\
&\mathbb M = \langle \{[Z_j]:\, 1\leq j\leq 9\} \rangle.
\end{array}$$
Then, based on the homomorphisms $\sigma_1, \ldots, \sigma_4: \mathbb P^{\otimes 5}_{32}\to \mathbb P^{\otimes 5}_{32}$ and direct  calculations, it may be concluded that
$$  \begin{array}{ll}
\medskip
&[\Sigma_5(\widetilde{Z_1})]^{\Sigma_5} = \langle [\widehat{q_1}] \rangle,\ \mbox{with $\widehat{q_1}:= \sum_{1\leq j\leq 20}\widetilde{Z_j}$},\\
\medskip
&\mbox{[}\Sigma_5(\widetilde{Z_{21}})\mbox{]}^{\Sigma_5} = \langle [\widehat{q_2}] \rangle,\ \mbox{with $\widehat{q_2}:= \sum_{21\leq j\leq 30}\widetilde{Z_j}$},\\
\medskip
&\mbox{[}\Sigma_5(\widetilde{Z_{31}}, \widetilde{Z_{51}})\mbox{]}^{\Sigma_5} = \langle [\widehat{q_3}] \rangle,\ \mbox{with $\widehat{q_3}:= \widetilde{Z_{32}} + \widetilde{Z_{34}} +\widetilde{Z_{37}} +\widetilde{Z_{38}} + \widetilde{Z_{40}} +\widetilde{Z_{43}} +\widetilde{Z_{44}} +  \sum_{48\leq j\leq 80}\widetilde{Z_j}$},\\
&\mbox{[}\Sigma_5(\widetilde{Z_{81}}, \widetilde{Z_{101}})\mbox{]}^{\Sigma_5} = \langle [\widehat{q_4} + \widehat{q_5}], [\widehat{q_5} + \widehat{q_6}] \rangle,\ \mbox{where}\\
 &\qquad\widehat{q_4}:=\widetilde{Z_{82}} + \widetilde{Z_{83}} +\widetilde{Z_{85}} +\widetilde{Z_{86}} + \sum_{90\leq j\leq 95}\widetilde{Z_{j}},\\ 
 &\qquad\widehat{q_5}:=\widetilde{Z_{81}} + \widetilde{Z_{84}} +\widetilde{Z_{87}} +\widetilde{Z_{88}} + \widetilde{Z_{89}},\\ 
\medskip
 &\qquad\widehat{q_6}:= \sum_{96\leq j\leq 115}\widetilde{Z_{j}},\\ 
&\mbox{[}\mathbb M\mbox{]}^{\Sigma_5}= \langle [\widehat{q_7}] \rangle,\ \mbox{with $\widehat{q_7}:= \sum_{2\leq j\leq 9}Z_j$}.
\end{array}$$
Now let us consider $K\in \mathbb P^{\otimes 5}_{32}$ such that $[K]\in [(Q^{\otimes 5}_{32})^{\omega'_{(1)}}]^{GL_5}.$ Then, since $\Sigma_5\subset GL_5,$ by the above computations, we have $K\equiv \sum_{1\leq i\leq 3}\xi_i\widehat{q_i} + \xi_4(\widehat{q_4} + \widehat{q_5}) + \xi_5(\widehat{q_5} + \widehat{q_6}) + \xi_6\widehat{q_7}$ where $\xi_i\in \mathbb Z_2.$ By direct computations using the relation $\sigma_5(K)\equiv K,$ we obtain
$$ \xi_1\widetilde{Z_{7}} + (\xi_1 + \xi_3)\widetilde{Z_{10}} + (\xi_1 + \xi_2)\widetilde{Z_{16}} + \xi_4\widetilde{Z_{33}} + (\xi_3 + \xi_5)\widetilde{Z_{52}} + \xi_6Z_8 + \ \mbox{other terms}\ \equiv 0,$$
which leads to $\xi_i = 0$ for all $i.$ This claims the result when $n = 32,$ and therefore, the proof of Theorem \ref{dlc5} is completed.  
\end{proof}

\section{Appendix}

In this section, we describe the following sets: $(\mathscr C^{\otimes 5}_{14})^{\widetilde{\omega}_{(j)}}$ for $1\leq j\leq 3,$  $(\mathscr C^{\otimes 5}_{31})^{\omega_{(j)}^{>0}}$ for $j = 2, 3$ and $(\mathscr C^{\otimes 5}_{32})^{\omega'_{(j)}}$ for $1\leq j\leq 3.$ At the same time, we provide an algorithm in MAGMA for verifying the dimensions of $Q^{\otimes 5}_{31}$ and $Q^{\otimes 5}_{32}.$

\subsection{Admissible monomials in $(\mathscr C^{\otimes 5}_{14})^{\widetilde{\omega}_{(1)}}$}\label{s14_1}

\begin{center}
%

\end{center}

\subsection{An algorithm in MAGMA}\label{s33}

In this subsection, to verify the dimension of the space $Q^{\otimes 5}$ in degree $N = 31$ and degree $N = 32,$ we provide an algorithm in MAGMA, which is established as follows:

\begin{lstlisting}[basicstyle={\fontsize{11pt}{9.5pt}\ttfamily},]

N := 31;

R<x1,x2,x3,x4,x5> := PolynomialRing(GF(2),5);
P<t> := PolynomialRing(R);
Sqhom := hom<R->P | [x + t*x^2 : x in [x1,x2,x3,x4,x5]]>;
function Sq(j,x) 
  c := Coefficients(Sqhom(x));
  if j+1 gt #c then
     return R!0;
  else
     return c[j+1];
  end if;
end function;
M := MonomialsOfDegree(R,N);
V := VectorSpace(GF(2),#M);
function MtoV(m)              
  if IsZero(m) then
    return V!0;
  else
    cv := [Index(M,mm) : mm in Terms(m)];
    return CharacteristicVector(V,cv);
  end if;
end function;
M1 := MonomialsOfDegree(R,N-1);
M2 := MonomialsOfDegree(R,N-2);
M4 := MonomialsOfDegree(R,N-4);
M8 := MonomialsOfDegree(R,N-8);
M16 := MonomialsOfDegree(R,N-16);
print [#m : m in [M1,M2,M4,M8, M16]];
\end{lstlisting}
>\ {\bf output: [46376,\ \ 40920,\ \ 31465,\ \ 17550,\ \ 3876]}

\begin{lstlisting}[basicstyle={\fontsize{11pt}{9.5pt}\ttfamily},]
S1 := [Sq(1,x) : x in M1];
print "S1";

H1 := sub<V | [MtoV(x) : x in S1]>;
print Dimension(H1);
\end{lstlisting}
>\ {\bf output: 24615} 

\begin{lstlisting}[basicstyle={\fontsize{11pt}{9.5pt}\ttfamily},]
S2 := [Sq(2,x) : x in M2];
print "S2";

H2 := sub<V | [MtoV(x) : x in S2]>;
print Dimension(H2);
\end{lstlisting}
>\ {\bf output: 28665} 

\begin{lstlisting}[basicstyle={\fontsize{11pt}{9.5pt}\ttfamily},]
H := H1 + H2;
print Dimension(H);
\end{lstlisting}
>\ {\bf output: 43334} 

\begin{lstlisting}[basicstyle={\fontsize{11pt}{9.5pt}\ttfamily},]
S4 := [Sq(4,x) : x in M4];
print "S4";

H4 := sub<V | [MtoV(x): x in S4]>;
print Dimension(H4);
\end{lstlisting}
>\ {\bf output: 26520} 

\begin{lstlisting}[basicstyle={\fontsize{11pt}{9.5pt}\ttfamily},]
H := H + H4;
print Dimension(H);
\end{lstlisting}
>\ {\bf output: 49530} 

\begin{lstlisting}[basicstyle={\fontsize{11pt}{9.5pt}\ttfamily},]
S8 := [Sq(8,x) : x in M8];
print "S8";

H8 := sub<V | [MtoV(x) : x in S8]>;
print Dimension(H8);
\end{lstlisting}
>\ {\bf output: 15900} 

\begin{lstlisting}[basicstyle={\fontsize{11pt}{9.5pt}\ttfamily},]
H := H + H8;
print Dimension(H);
\end{lstlisting}
>\ {\bf output: 51494} 

\begin{lstlisting}[basicstyle={\fontsize{11pt}{9.5pt}\ttfamily},]
S16 := [Sq(16,x) : x in M16];
print "S16";

H16 := sub<V | [MtoV(x) : x in S16]>;
print Dimension(H16);
\end{lstlisting}
>\ {\bf output: 0} 

\begin{lstlisting}[basicstyle={\fontsize{11pt}{9.5pt}\ttfamily},]
H := H + H16;
print Dimension(H);
\end{lstlisting}
>\ {\bf output: 51494} 

\begin{lstlisting}[basicstyle={\fontsize{11pt}{9.5pt}\ttfamily},]
print "\n",Dimension(V);
\end{lstlisting}
>\ {\bf output: 52360} 

\begin{lstlisting}[basicstyle={\fontsize{11pt}{9.5pt}\ttfamily},]
print Dimension(H);
\end{lstlisting}
>\ {\bf output: 51494} 

\begin{lstlisting}[basicstyle={\fontsize{11pt}{9.5pt}\ttfamily},]
print Dimension(V) - Dimension(H);
\end{lstlisting}

\medskip

The calculations indicated that $$ \dim Q^{\otimes 5}_{31} = \dim V - \dim H = 52360 - 51494 = 866.$$  Replacing $N = 31$ by $N = 32$ in the above algorithm, we obtain $$\dim Q^{\otimes 5}_{32} = \dim V - \dim H =  58905-57901 = 1004.$$

{\bf Remark.} To compute the hit monomials and the inadmissible monomials, we only need to consider the effects of the Steenrod squaring operations  $Sq^{j}: \mathbb P^{\otimes 5}_{N-j}\to \mathbb P^{\otimes 5}_{N}$ for $j = 2^{t},\, t\geq 0, \, 2j \leq N.$ This can be explained as follows:  Because the set of all these squares are a minimal generating set for $\mathscr A_2$ as an algebra over $\mathbb Z_2$. Further, as well known, for any $X\in \mathbb P_{N-j}^{\otimes 5}$, $Sq^{2^{t}}(X) = 0$ if $\deg(X) = N-j< 2^{t}.$ And therefore we need only to compute the action of the squares $Sq^{2^{t}}$ on $\mathbb P_{N}^{\otimes 5}$ such that $2^{t}\leq \deg(X)$. More importantly, by using Kameko's homomorphism and changing the number of variables in the polynomial ring "$R<x_1, x_2, \ldots, x_5>$" and degree "$N$" in the above algorithm, we can verify all the results of Peterson \cite{F.P1}, Kameko \cite{M.K} and Sum \cite{N.S1} on the dimension of the space \eqref{kgvt2} for $t\leq 4.$ 

\end{document}